\documentclass{amsart}

\usepackage[english]{babel}
\usepackage[T1]{fontenc}
\usepackage[latin1]{inputenc}

\usepackage{dsfont,bm}
\usepackage{enumerate}
\usepackage[numbers]{natbib}
\usepackage{soul}

\usepackage{a4wide}

\usepackage{amsmath,amscd,amsfonts,amssymb,amsaddr,color}
\usepackage{latexsym, graphicx,rotating, subfig}
\usepackage[pdftex,bookmarks,colorlinks,citecolor=blue]{hyperref}
	
\newtheorem{theo}{Theorem}
\newtheorem{corol}{Corollary}
\newtheorem{prop}{Proposition}
\newtheorem{lemm}{Lemma}

\renewcommand{\d}{\mathrm{d}}
\newcommand{\N}{\mathbb{N}}

\newcommand{\R}{\mathbb{R}}
\renewcommand{\P}{\mathbb{P}}
\newcommand{\E}{\mathbb{E}}
\renewcommand{\L}{\mathbb{L}}

\newcommand{\pen}{\text{\upshape{pen}}}
\newcommand{\argmin}[1]{\underset{#1}{\text{argmin }}}
\newcommand{\argmax}[1]{\underset{#1}{\text{argmax }}}
\newcommand{\Var}{\text{\upshape{Var}}}

\newdimen\AAdi%
\newbox\AAbo%
\def\AArm{\fam0 }
\def\AAk#1#2{\setbox\AAbo=\hbox{#2}\AAdi=\wd\AAbo\kern#1\AAdi{}}%
\def\BBone{{\AArm 1\AAk{-.8}{I}I}}%
\setcounter{tocdepth}{2}
\makeatletter
\def\@settitle{\begin{center}%
  \baselineskip14\p@\relax
  \bfseries\LARGE
  \@title
  \end{center}%
}
\makeatother

\date{\today}

\begin{document}
\title{Multivariate Intensity Estimation via Hyperbolic Wavelet Selection}
\author{Nathalie Akakpo}
\email{nathalie.akakpo@upmc.fr}
\address{Laboratoire de Probabilit\'es et Mod\`eles Al\'eatoires (LPMA), UMR 7599\\Universit\'e Pierre et Marie Curie (UPMC), Paris\bigskip\\Centre de Recherches Math\'ematiques (CRM), UMI 3457\\Universit\'e de Montr\'eal (UdeM)}

\maketitle

\begin{abstract}
We propose a new statistical procedure able in some way to overcome the curse of dimensionality without structural assumptions on the function to estimate. It relies on a least-squares type penalized criterion and a new collection of models built from hyperbolic biorthogonal wavelet bases. We study its properties in a unifying intensity estimation framework, where an oracle-type inequality and adaptation to mixed smoothness are shown to hold. Besides, we describe an algorithm for implementing the estimator with a quite reasonable complexity.
\end{abstract}

{\small
\textit{Keywords:} Hyperbolic wavelets; Biorthogonal wavelets; Mixed smoothness; Model selection; Density; Copula; Poisson process; L\'evy process.}

\tableofcontents
\section{Introduction}

Over the last decades, many wavelet procedures have been developed in various statistical frameworks. Yet, in multivariate settings, most of them are based on isotropic wavelet bases. These indeed have the advantage of being  as easily tractable as their univariate counterparts since each isotropic wavelet is a tensor product of univariate wavelets coming from the same resolution level. Notable counterexamples are~\cite{DonohoCART},~\cite{Neumann} and~\cite{NeumannVonSachs}, or~\cite{ACF2015} and~\cite{ACFMaxiset}. They underline the usefulness of hyperbolic wavelet bases, where coordinatewise varying resolution levels are allowed, so as to recover a wider range of functions, and in particular functions with anisotropic smoothness.

Much attention has also been paid to the so-called curse of dimensionality. A common way to overcome this problem in Statistics is to impose structural assumptions on the function to estimate. In a regression framework, beyond the well-known additive and single-index models,  we may cite the work of~\cite{HorowitzMammen} who propose a spline-based method in an additive model with unknown link function, or the use of ANOVA-like decompositions in~\cite{Ingster} or~\cite{DIT}. Besides, two landmark papers consider a general framework of composite functions, encompassing several classical structural assumptions:~\cite{JLT}  propose a kernel-based procedure in the white noise framework, whereas~\cite{BaraudBirgeComposite} propose a general model selection procedure with a wide scope of applications. Finally, Lepski~\cite{LepskiInd} (see also~\cite{RebellesPointwise,RebellesLp}) consider density estimation with adaptation to a possibly multiplicative structure of the density. In the meanwhile, in the field of Approximation Theory and Numerical Analysis, a renewed interest in function spaces with dominating mixed smoothness has been growing (see for instance~\cite{DTU}), due to their tractability for multivariate integration for instance. Such spaces do not impose any structure, but only that the highest order derivative is a mixed derivative. Surprisingly, in the statistical literature, it seems that only the thresholding-type procedures of~\cite{Neumann} and~\cite{BPenskyPicard} deal with such spaces, either in the white noise framework or in a functional deconvolution model. 

In order to fill this gap, this paper is devoted to a new statistical procedure based on wavelet selection from hyperbolic biorthogonal bases. We underline its universality by studying it in a general intensity estimation framework, encompassing many examples of interest such as density, copula density, Poisson intensity or L\'evy jump intensity estimation. We first define a whole collection of linear subspaces, called models, generated by subsets of the dual hyperbolic basis, and a least-squares type criterion adapted to the norm induced by the primal hyperbolic basis. Then we describe a procedure to choose the best model from the data by using a penalized approach similar to~\cite{BBM}. Our procedure satisfies an oracle-type inequality provided the intensity to estimate is bounded. Besides, it reaches the minimax rate up to a constant factor, or up to a logarithmic factor, over a wide range of spaces with dominating mixed smoothness, and this rate is akin to the one we would obtain in a univariate framework. Notice that, contrary to~\cite{Neumann} or~\cite{BPenskyPicard}, we allow for a greater variety of such spaces (of Sobolev, H\"older or Besov type smoothness) and also for spatially nonhomogeneous smoothness. For that purpose, we prove a key result from nonlinear approximation theory, in the spirit of~\cite{BMApprox}, that may be of interest for other types of model selection procedures (see for instance~\cite{BirgeIHP,BaraudHellinger,BaraudBirgeRho}). Depending on the kind of intensity to estimate, different structural assumptions might make sense, some of which have been considered in~\cite{JLT},~\cite{BaraudBirgeComposite},~\cite{LepskiInd},~\cite{RebellesPointwise,RebellesLp}, but not all. We explain in what respect these structural assumptions fall within the scope of estimation under dominating mixed smoothness. Yet, we emphasize that we do not need to impose any structural assumptions on the target function. Thus in some way our method is adaptive at the same time to many structures. Besides, it can be implemented with a computational complexity linear in the sample size, up to logarithmic factors.

The plan of the paper is as follows. In Section~\ref{sec:framework}, we describe the general intensity estimation framework and several examples of interest. In Section~\ref{sec:onemodel}, we define the so-called pyramidal wavelet models and a least-squares type criterion, and provide a detailed account of estimation on a given model. Section~\ref{sec:selection} is devoted to the choice of an adequate penalty so as to perform data-driven model selection. The optimality of the resulting procedure from the minimax point of view is then discussed in Section~\ref{sec:adaptivity}, under mixed smoothness assumptions. The algorithm for implementing our wavelet procedure and an illustrative example are given in Section~\ref{sec:algorithm}. All proofs are postponed to Section~\ref{sec:proofs}. Let us end with some remark about the notation. Throughout the paper, $C,C_1,\ldots$ will stand for numerical constants, and $C(\theta), C_1(\theta),\ldots$ for positive reals that only depend on some $\theta.$ Their values are allowed to change from line to line.

\section{Framework and examples}\label{sec:framework}

	\subsection{General framework} Let $d\in\N,d\geq 2,$ and $Q=\prod_{k=1}^d \left[a_k,b_k\right]$ be a given hyperrectangle in $\R^d$ equipped with its Borel $\sigma-$algebra $\mathcal B(Q)$ and the Lebesgue measure. We denote by $\L^2(Q)$ the space of square integrable functions on $Q,$ equipped with its usual
norm 
\begin{equation}\label{eq:usualnorm}
\|t\| = \sqrt{\int_Q t^2(x) \d x}
\end{equation}
and  scalar product $\langle . , .\rangle.$ In this article, we are interested in a nonnegative measure on $\mathcal B(Q)$ that admits a bounded density $s$ with respect to the Lebesgue measure, and our aim is to estimate that function $s$ over $Q.$ Given a probability space $(\Omega, \mathcal E,\P),$ we assume that there exists some random measure $M$ defined on $(\Omega, \mathcal E,\P)$, with values in the set of Borel measures on $Q$ such that, for all $A\in\mathcal B(Q),$
\begin{equation}\label{eq:M}
\E\left[M(A)\right] =  \langle \BBone_A, s \rangle.
\end{equation}	
By classical convergence theorems, this condition implies that, for all nonnegative or bounded measurable functions $t,$
\begin{equation}\label{eq:Mgen}
\E\left[\int_Q t \d M\right] = \langle t, s \rangle.
\end{equation}
We assume that we observe some random measure $\widehat M,$ which is close enough to $M$ in a sense to be made precise later. When $M$ can be observed, we set of course $\widehat M=M.$

	\subsection{Examples}\label{sec:examples} Our general framework encompasses several special frameworks of interest, as we shall now show.

\subsubsection{Example 1: density estimation.}\label{sec:density} Given $n\in\N^\star,$ we observe identically distributed random variables $Y_1,\ldots,Y_n$ with common density $s$ with respect to the Lebesgue measure on $Q=\prod_{k=1}^d \left[a_k,b_k\right].$ The observed empirical measure is then given by
$$\widehat M(A)=M(A)=\frac{1}{n}\sum_{i=1}^n \BBone_A(Y_i), \text{ for } A\in\mathcal B(Q),$$
and obviously satisfies~\eqref{eq:M}. 

\subsubsection{Example 2: copula density estimation.}\label{sec:copula} Given $n\in\N^\star,$ we observe independent and identically distributed random variables $X_1,\ldots,X_n$ with values in $\R^d.$ For $i=1,\ldots,n$ and $j=1,\ldots,d,$ the $j$-th coordinate $X_{ij}$ of $X_i$ has continuous distribution function $F_j.$ We recall that, from Sklar's Theorem~\cite{Sklar} (see also~\cite{Nelsen}, for instance), there exists a unique distribution function $C$ on $[0,1]^d$ with uniform marginals such that, for all $(x_1,\ldots,x_d)\in \R^d,$
$$\P(X_{i1}\leq x_1,\ldots, X_{id}\leq x_d) = C(F_1(x_1),\ldots,F_d(x_d)).$$
This function $C$ is called the copula of $X_{i1},\ldots,X_{id}.$ We assume that it admits a density $s$ with respect to the Lebesgue measure on $Q=[0,1]^d.$ Since $C$ is the joint distribution function of 
the $F_j(X_{1j}), j=1,\ldots,d,$ a random measure satisfying~\eqref{eq:M} is given by
$$M(A)=\frac{1}{n}\sum_{i=1}^n \BBone_A\left(F_1(X_{i1}),\ldots,F_d(X_{id})\right), \text{ for } A\in\mathcal B([0,1]^d).$$
As the marginal distributions $F_j$ are usually unknown, we replace them by the empirical distribution functions $\hat F_{nj},$ where 
$$\hat F_{nj}(t)=\frac{1}{n}\sum_{i=1}^n \BBone_{X_{ij}\leq t},$$ 
and define 
$$\widehat M(A)=\frac{1}{n}\sum_{i=1}^n \BBone_A\left(\hat F_{n1}(X_{i1}),\ldots,\hat F_{nd}(X_{id})\right), \text{ for } A\in\mathcal B([0,1]^d).$$

\subsubsection{Example 3: Poisson intensity estimation.}\label{sec:Poisson} Let us denote by $\text{Vol}_d(Q)$ the Lebesgue measure of $Q=\prod_{k=1}^d \left[a_k,b_k\right].$ We observe a Poisson process $N$ on $Q$ whose mean measure has intensity $\text{Vol}_d(Q)s.$  Otherwise said, for all finite family $(A_k)_{1\leq k\leq K}$ of disjoint measurable subsets of $Q,$ $N(A_1),\ldots,N(A_K)$ are independent Poisson random variables with respective parameters $\text{Vol}_d(Q)\int_{A_1} s,\ldots,\text{Vol}_d(Q)\int_{A_K} s.$
Therefore the empirical measure 
$$\widehat M(A)=M(A)=\frac{N(A)}{\text{Vol}_d(Q)}, \text{ for } A\in\mathcal B(Q),$$ 
does satisfy~\eqref{eq:M}. 
We do not assume $s$ to be constant throughout $Q$ so that the Poisson process may be nonhomogeneous.

\subsubsection{Example 4: L\'evy jump intensity estimation (continuous time).}\label{sec:levydensitycont} Let $T$ be a fixed positive real, we observe on $[0,T]$ a L\'evy process $\mathbf X=(X_t)_{t\geq 0}$ with values in $\R^d.$ Otherwise said, $\mathbf X$ is a process starting at $0,$ with stationary and independent increments, and which is continuous in probability with c\`adl\`ag trajectories (see for instance~\cite{Bertoin,Sato,ContTankov}). This process may have jumps, whose sizes are ruled by the so-called jump intensity measure or L\'evy measure. An important example of such process is the compound Poisson process 
$$X_t=\sum_{i=1}^{N_t} \xi_i,t\geq 0,$$
where $(N_t)_{t\geq 0}$ is a univariate homogeneous Poisson process, $(\xi_i)_{i\geq i}$ are i.i.d. with values in $\R^d$ and distribution $\rho$ with no mass at $0,$ and $(N_t)_{t\geq 0}$ and $(\xi_i)_{i\geq i}$ are independent. In this case, $\rho$ is also the L\'evy measure of $\mathbf X.$

Here, we assume that the L\'evy measure admits a density $f$ with respect to the Lebesgue measure on $\R^d\backslash\{0\}.$ Given some compact hyperrectangle $Q=\prod_{k=1}^d \left[a_k,b_k\right] \subset \R^d\backslash\{0\},$ our aim is to estimate the restriction $s$ of $f$ to $Q.$ For that purpose, we use the observed empirical measure 
$$\widehat M(A)=M(A)=\frac{1}{T} \iint\limits_{[0,T]\times A} N(\d t, \d x), \text{ for } A\in\mathcal B(Q).$$ 
A well-known property of L\'evy processes states that the random measure $N$ defined for $B\in \mathcal B\left([0,+\infty)\times \R^d\backslash\{0\}\right)$ by
$$N(B)= \sharp \{t >0/ (t, X_t-X_{t^-}) \in B\}$$
is a Poisson process with mean measure 
\begin{equation*}\label{eq:levymeasure}
\mu(B)=\int\int_B f(x) \d t \d x,
\end{equation*}
so that $M$ satisfies~\eqref{eq:M}.

\subsubsection{Example 5: L\'evy jump intensity estimation (discrete time).}\label{sec:levydensitydisc} The framework is the same as in Example 4, except that $(X_t)_{t\geq 0}$ is not observed. Given some time step $\Delta>0$ and $n\in\N^\star,$ we only have at our disposal the random variables 
$$Y_i=X_{i\Delta}- X_{(i-1)\Delta}, i=1,\ldots n.$$
In order to estimate $s$ on $Q,$ we consider the random measure
$$M(A)=\frac{1}{n\Delta} \iint\limits_{[0,n\Delta]\times A} N(\d t, \d x), \text{ for } A\in\mathcal B(Q),$$
which is unobserved, and replaced for estimation purpose with 
$$\widehat M(A)=\frac{1}{n \Delta }\sum_{i=1}^n \BBone_A(Y_i), \text{ for } A\in\mathcal B(Q).$$

\section{Estimation on a given pyramidal wavelet model}\label{sec:onemodel}

The first step of our estimation procedure relies on the definition of finite dimensional linear subspaces of $\L^2(Q),$  called models, generated by some finite families of biorthogonal wavelets. We only describe here models for $Q=[0,1]^d.$ For a general hyperrectangle $Q$, the adequate models can be deduced by translation and scaling. We then introduce a least-squares type contrast that allows to define an estimator of $s$ within a given wavelet model.

	\subsection{Wavelets on  $\L_2([0,1])$}\label{sec:uniwave_assumptions}
	We shall first introduce a multiresolution analysis and a wavelet basis for $\L_2([0,1])$ satisfying the same general assumptions as in~\cite{Hochmuth} and~\cite{HochmuthMixed}. Concrete examples of wavelet bases satisfying those assumptions may be found in~\cite{CohenInterval} and~\cite{Dahmen} for instance.
In the sequel, we denote by $\kappa$ some positive constant, that only depends on the choice of the bases. We fix the coarsest resolution level at $j_0 \in\N.$ On the one hand, we assume that the scaling spaces 
$$V_j=\text{Vect}\{\phi_\lambda; \lambda \in \Delta_j\} 
\text{ and } 
V^\star_j=\text{Vect}\{\phi^\star_\lambda; \lambda \in \Delta_j\}, j\geq j_0,$$
satisfy the following hypotheses:
\begin{enumerate}[$S.i)$]
\item (Riesz bases) For all $j\geq j_0$,  $\{\phi_\lambda; \lambda \in \Delta_j\}$ are linearly independent functions from $\L_2([0,1])$, so are $\{\phi^\star_\lambda; \lambda \in \Delta_j\}$, and they form Riesz bases of $V_j$ and $V^\star_j$, \textit{i.e.} $\left\|\sum_{\lambda\in\Delta_j} a_\lambda \phi_\lambda\right\| \sim \left(\sum_{\lambda\in\Delta_j} a^2_\lambda \right)^{1/2} \sim \left\|\sum_{\lambda\in\Delta_j} a_\lambda \phi^\star_\lambda\right\|.$ 
\item (Dimension) There exists some nonnegative integer $B$ such that, for all $j\geq j_0$, $\dim(V_j)=\dim(V^\star_j)=\sharp \Delta_j =2^j+B.$
\item (Nesting) For all $j\geq j_0$, $V_j\subset V_{j+1}$ and $V^\star_j\subset V^\star_{j+1}.$
\item (Density) $\overline{\cup_{j\geq j_0} V_j}=\overline{\cup_{j\geq j_0} V^\star_j}=\L_2([0,1])$.
\item (Biorthogonality) Let $j\geq j_0$, for all $\lambda,\mu\in \Delta_j$, $\langle \phi_\lambda, \phi^\star_\mu\rangle =\delta_{\lambda,\mu}.$
\item (Localization) Let $j\geq j_0$, for all $\lambda\in \Delta_j$, $|\text{Supp}(\phi_\lambda)| \sim |\text{Supp}(\phi^\star_\lambda)| \sim 2^{-j}.$
\item (Almost disjoint supports) For all $ j\geq j_0$ and all $\lambda\in \Delta_j$, 
$$\max\left(\sharp\{ \mu\in\Delta_j \text{ s.t. } \text{Supp}(\phi_\lambda) \cap \text{Supp}(\phi_\mu)\neq \varnothing\}, \sharp\{ \mu\in\Delta_j \text{ s.t. } \text{Supp}(\phi^\star_\lambda)\cap \text{Supp}(\phi^\star_\mu)\neq \varnothing\}\right) \leq \kappa.$$
\item (Norms) For all $j\geq j_0$ and all $\lambda\in\Delta_j$, $\|\phi_\lambda\|=\|\phi^\star_\lambda\|=1$ and $\max(\|\phi_\lambda\|_{\infty},\|\phi^\star_\lambda\|_{\infty}) \leq \kappa 2^{j/2}$.
\item (Polynomial reproducibility) The primal scaling spaces are exact of order $N$, \textit{i.e.} for all $j\geq j_0$, $\Pi_{N-1} \subset V_j,$ where $\Pi_{N-1}$ is the set of all polynomial functions with degree $\leq N-1$ over $[0,1].$
\end{enumerate}

On the other hand, the wavelet spaces
$$W_j=\text{Vect}\{\psi_\lambda; \lambda \in \nabla_j\} 
\text{ and }
W^\star_j=\text{Vect}\{\psi^\star_\lambda; \lambda \in \nabla_j\}, j\geq j_0+1,$$
fulfill the following conditions:
\begin{enumerate}[$W.i)$]
\item (Riesz bases) The functions $\{\psi_\lambda;\lambda\in\cup_{j\geq j_0+1} \nabla_j\}$ are linearly independent. Together with the $\{\phi_\lambda;\lambda\in\Delta_{j_0}\}$, they form a Riesz basis for $\L_2([0,1])$. The same holds for the $\psi^\star$ and the $\phi^\star$. 
\item (Orthogonality) For all $j\geq j_0$, $V_{j+1}=V_j\oplus W_{j+1}$ and $V^\star_{j+1}=V^\star_j\oplus W^\star_{j+1}$, with $V_j\perp W^\star_{j+1}$ and $V^\star_j\perp W_{j+1}.$
\item (Biorthogonality) Let $j\geq j_0$, for all $\lambda,\mu\in \nabla_{j+1}$, $\langle \psi_\lambda, \psi^\star_\mu\rangle =\delta_{\lambda,\mu}.$
\item (Localization) Let $j\geq j_0$, for all $\lambda\in \nabla_{j+1}$, $|\text{Supp}(\psi_\lambda)| \sim |\text{Supp}(\psi^\star_\lambda)| \sim 2^{-j}.$
\item (Almost disjoint supports) For all $ j\geq j_0$ and all $\lambda\in \nabla_{j+1}$, 
$$\max\left(\sharp\{ \mu\in\nabla_{j+1} \text{ s.t. } \text{Supp}(\psi_\lambda) \cap \text{Supp}(\psi_\mu)\neq \varnothing\}, \sharp\{ \mu\in\nabla_{j+1} \text{ s.t. } \text{Supp}(\psi^\star_\lambda)\cap \text{Supp}(\psi^\star_\mu)\neq \varnothing\}\right) \leq \kappa.$$
\item (Norms) For all $j\geq j_0$ and all $\lambda\in\nabla_{j+1}$, $\|\psi_\lambda\|=\|\psi^\star_\lambda\|=1$ and $\max(\|\psi_\lambda\|_{\infty},\|\psi^\star_\lambda\|_{\infty}) \leq \kappa 2^{j/2}$.
\item (Fast Wavelet Transform) Let $j\geq j_0$, for all $\lambda\in\nabla_{j+1}$, 
$$\sharp\{\mu\in\Delta_{j+1} | \langle \psi_\lambda ,\phi_\mu\rangle \neq 0 \}\leq \kappa$$
and for all $\mu\in\Delta_{j+1}$
$$ |\langle \psi_\lambda ,\phi_\mu\rangle| \leq \kappa.$$
The same holds for the $\psi^\star_\lambda$ and the $\phi^\star_\lambda.$ 
\end{enumerate}

\bigskip
\textit{Remarks: }
\begin{itemize}
\item These properties imply that any function $f\in\L_2([0,1])$ may be decomposed as 
\begin{equation}\label{eq:decompwavelet1}
f=\sum_{\lambda\in \Delta_{j_0}}\langle f,\phi_\lambda \rangle \phi^\star_\lambda + \sum_{j\geq j_0+1} \sum_{\lambda\in \nabla_j}  \langle f,\psi_\lambda \rangle \psi^\star_\lambda.
\end{equation}
\item Properties $S.ii)$ and $W.ii)$ imply that $\dim(W_{j+1})=2^j.$ 
\item Property $W.vii)$ means in particular that, for each resolution level $j$, any wavelet can be represented as a linear combination of scaling functions from the same resolution level with a number of components bounded independently of the level as well as the amplitude of the coefficients.
\end{itemize}

As is well known, contrary to orthogonal bases, biorthogonal bases allow for both symmetric and smooth wavelets. Besides, properties of dual biorthogonal bases are usually not the same. Usually, in decomposition~\eqref{eq:decompwavelet1}, the analysis wavelets $\phi_\lambda$ and $\psi_\lambda$ are the one with most null moments, whereas the synthesis wavelets $\phi^\star_\lambda$ and $\psi^\star_\lambda$ are the one with greatest smoothness. Yet,  we may sometimes need the following smoothness assumptions on the analysis wavelets (not very restrictive in practice), only to bound residual terms due to the replacement of $M$ with $\widehat M.$ 

\medskip
\noindent
\textit{\textbf{Assumption (L).}    
For all $\lambda\in\Delta_{j_0}$, for all $j\geq j_0$ and all $\mu\in\nabla_{j+1},$ $\phi_\lambda$ and $\psi_\mu$ are Lipschitz functions with Lipschitz norms satisfying $\|\phi_\lambda\|_L \leq \kappa 2^{3j_0/2}$ and $\|\psi_\mu\|_L\leq  \kappa 2^{3j/2}.$}

\medskip
\noindent
We still refer to~\cite{CohenInterval} and~\cite{Dahmen} for examples of wavelet bases satisfying this additional assumption.

	\subsection{Hyperbolic wavelet basis on  $\L_2([0,1]^d)$}
In the sequel, for ease of notation, we set $\N_{j_0} =\{j\in\N, j\geq j_0\},$  $\nabla_{j_0}=\Delta_{j_0}$, $W_{j_0}=V_{j_0}$ and $W^\star_{j_0}=V^\star_{j_0},$ and for $\lambda\in\nabla_{j_0}$, $\psi_\lambda=\phi_\lambda$ and $\psi^\star_\lambda=\phi^\star_\lambda.$ Given a biorthogonal basis of $\L_2([0,1])$ chosen according to~\ref{sec:uniwave_assumptions}, we deduce biorthogonal wavelets of $\L_2([0,1]^d)$ by tensor product. More precisely, for $\boldsymbol{j}=(j_1,\ldots,j_d)\in\N_{j_0}^d$, we set $\bm{\nabla_j}=\nabla_{j_1}\times\ldots\times\nabla_{j_d}$ and for all $\bm{\lambda}=(\lambda_1,\ldots,\lambda_d)\in \bm{\nabla_j}$, we define $\Psi_{\bm{\lambda}}(x_1,\ldots,x_d)=\psi_{\lambda_1}(x_1)\ldots\psi_{\lambda_d}(x_d)$ and  $\Psi^\star_{\bm{\lambda}}(x_1,\ldots,x_d)=\psi^\star_{\lambda_1}(x_1)\ldots\psi^\star_{\lambda_d}(x_d).$ Contrary to most statistical works based on wavelets, we thus allow for tensor products of univariate wavelets coming from different resolution levels $j_1,\ldots,j_d.$ Writing $\Lambda=\cup_{{\bm j}\in \N_{j_0}^d}\bm\nabla_{\bm j},$ the families $\{\Psi_{\bm{\lambda}};\bm{\lambda}\in \Lambda \}$ and  $\{\Psi^\star_{\bm{\lambda}};\bm{\lambda}\in \Lambda\}$ define biorthogonal bases of $\L_2([0,1]^d)$ called biorthogonal hyperbolic bases. Indeed,
$$\L_2([0,1]^d)=\overline{\cup_{j\geq j_0} V_j\otimes \ldots \otimes V_j},$$
and for all $j\geq j_0,$
\begin{align*}
V_j\otimes \ldots \otimes V_j
&=(W_{j_0}\oplus W_{j_0+1}\oplus\ldots\oplus W_j )\otimes\ldots\otimes(W_{j_0}\oplus W_{j_0+1}\oplus\ldots\oplus W_j ) \\
&=\underset{j_0\leq k_1,\ldots,k_d \leq j}\oplus W_{k_1}\otimes\ldots\otimes W_{k_d}.
\end{align*}
In the same way, 
$$\L_2([0,1]^d)=\overline{\cup_{j\geq j_0} \underset{j_0\leq k_1,\ldots,k_d \leq j}\oplus W^\star_{k_1}\otimes\ldots\otimes W^\star_{k_d}}.$$
Besides, they induce on $\L_2([0,1]^d)$ the norms 
\begin{equation}\label{eq:normbiortho}
\|t\|_{\bm\Psi}=\sqrt{\sum_{\bm{\lambda} \in\Lambda} \langle t,\Psi_{\bm{\lambda}}\rangle ^2}\text{ and }\|t\|_{\bm\Psi^\star}=\sqrt{\sum_{\bm{\lambda} \in\Lambda} \langle t,\Psi^\star_{\bm{\lambda}}\rangle ^2},
\end{equation}
which are both equivalent to $\|.\|,$ with equality when the wavelet basis is orthogonal. It should be noticed that the scalar product derived from $\|.\|_{\bm\Psi},$ for instance, is
\begin{equation}\label{eq:psbiortho}
\langle t,u\rangle_{\bm\Psi}=\sum_{\bm{\lambda} \in\Lambda} \langle t,\Psi_{\bm{\lambda}}\rangle\langle u,\Psi_{\bm{\lambda}}\rangle.
\end{equation}
	
	\subsection{Pyramidal models}
A wavelet basis in dimension 1 has a natural pyramidal structure when the wavelets are grouped according to their resolution level. A hyperbolic basis too, provided we define a proper notion of resolution level that takes into account anisotropy: for a wavelet $\Psi_{\bm\lambda}$ or $\Psi^\star_{\bm\lambda}$  with $\bm\lambda\in\bm\nabla_{\bm j},$ we define the global resolution level  as $|\bm j|:=j_1+\ldots+j_d.$  Thus, the supports of all wavelets corresponding to a given global resolution level $\ell\in\N_{dj_0}$ have a volume of roughly $2^{-\ell}$ but exhibit very different shapes. For all $\ell \in\N_{dj_0},$ we define $\bm J_\ell=\{\bm j \in \N_{j_0}^d/|\bm j|=\ell\},$ and $U\bm{\nabla}( \ell)=\cup_{\bm j \in \bm J_\ell} \bm\nabla_{\bm j} $ the index set for $d$-variate wavelets at resolution level $\ell$. 

Given some maximal resolution level $L_\bullet\in\N_{dj_0}$, we define, for all $\ell_1\in\{dj_0+1,\ldots, L_\bullet +1\}$,  the family $\mathcal M^\mathcal P_{\ell_1}$ of all sets $m$ of the form 
$$m=\left(\bigcup_{\ell=dj_0}^{\ell_1-1} U\bm{\nabla}( \ell) \right)
\cup \left( \bigcup_{k=0}^{L_\bullet-\ell_1} m(\ell_1+k)\right),$$ 
where, for all $0\leq k \leq L_\bullet-\ell_1$, $m(\ell_1+k)$ may be any subset of $U\bm{\nabla}(\ell_1+k)$ with $N(\ell_1,k)$ elements. Typically, $N(\ell_1,k)$ will be chosen so as to impose some sparsity: it is expected to be smaller than the total number of wavelets at level $\ell_1+k$ and to decrease when the resolution level increases. An adequate choice of $N(\ell_1,k)$ will be proposed in Proposition~\ref{prop:combinatorial}. Thus, choosing a set in $\mathcal M^\mathcal P_{\ell_1}$ amounts to keep all hyperbolic wavelets at level at most   $\ell_1-1,$ but only a few at deeper levels. We set $\mathcal M^\mathcal P=\cup_{\ell_1=dj_0+1}^{L_\bullet +1} \mathcal M^\mathcal P_{\ell_1}$ and define a pyramidal model as any finite dimensional subspace of the form
$$S^{\star}_m=\text{Vect}\{\Psi^\star_{\bm\lambda}; \bm\lambda \in m \}, \text{ for }m \in \mathcal M^\mathcal P.$$ We denote by $D_m$ the dimension of $S^{\star}_m.$
Setting $m_\bullet=\bigcup_{\ell=dj_0}^{L_\bullet} U\bm{\nabla}(\ell),$ we can see that all pyramidal models are included in $S^{\star}_{m_\bullet}.$

	\subsection{Least-squares type estimator on a pyramidal model}\label{sec:LSonemodel}
Let us fix some model $m\in\mathcal M^\mathcal P.$ If the random measure $M$ is observed, then we can build a least-squares type estimator $\check s_m^\star$ for $s$ with values in $S^\star_m$ and associated with the norm $\|.\|_{\bm\Psi}$ defined by~\eqref{eq:normbiortho}. Indeed, setting
\begin{equation*}
\gamma(t)=\|t\|_{\bm\Psi}^2 - 2 \sum_{\bm\lambda\in\Lambda} \langle t, \Psi_{\bm\lambda} \rangle\check\beta_{\bm\lambda},
\end{equation*}
where $$\check\beta_{\bm\lambda}=\int_Q \Psi_{\bm\lambda} \d M,$$
we deduce from~\eqref{eq:Mgen} that $s$ minimizes over $t\in\L^2(Q)$
\begin{align*}
\|s-t\|^2_{\bm\Psi} - \|s\|^2_{\bm\Psi}
= \|t\|^2_{\bm\Psi}- 2 \sum_{\bm\lambda\in \Lambda} \langle t,\Psi_{\bm\lambda}  \rangle  \langle s,\Psi_{\bm\lambda}  \rangle 
=\E[\gamma(t)],
\end{align*}
so we introduce 
$$\check s^\star_m =\argmin{t\in S^\star_m} \gamma (t).$$
For all sequences of reals $(\alpha_{\bm\lambda})_{\bm\lambda \in m},$
\begin{equation}\label{eq:checksm}
\gamma\left(\sum_{\bm\lambda \in m}  \alpha_{\bm\lambda}  \Psi^\star_{\bm\lambda} \right)
= \sum_{\bm\lambda \in m}\left(\alpha_{\bm\lambda} - \check\beta_{\bm\lambda}\right)^2 - \sum_{\bm\lambda \in m} \check \beta_{\bm\lambda}^2,
\end{equation}
hence 
$$\check s^\star_m =\sum_{\bm\lambda \in m} \check \beta_{\bm\lambda} \Psi^\star_{\bm\lambda}.$$
 
Since we only observe the random measure $\widehat M,$ we consider the pseudo-least-squares contrast
\begin{equation*}
\widehat \gamma(t)=\|t\|_{\bm\Psi}^2 - 2 \sum_{\bm\lambda\in\Lambda} \langle t, \Psi_{\bm\lambda} \rangle{\widehat\beta_{\bm\lambda}},
\end{equation*}
where 
$$\widehat\beta_{\bm\lambda}=\int_Q \Psi_{\bm\lambda} \d \widehat M,$$
and we define the best estimator of $s$ within $S^\star_m$ as
$$\widehat  s^\star_m = \argmin{t\in S^\star_m} \widehat\gamma (t)= \sum_{\bm\lambda \in m} \widehat \beta_{\bm\lambda} \Psi^\star_{\bm\lambda}.$$

	\subsection{Quadratic risk on a pyramidal model}\label{sec:QRonemodel}
Let us introduce the orthogonal projection of $s$ on $S^\star_m$ for the norm $\|.\|_{\bm\Psi},$ that is
$$s^\star_m =\sum_{\bm\lambda \in m}\beta_{\bm\lambda}\Psi^\star_{\bm\lambda},$$
where 
$$\beta_{\bm\lambda}=\langle \Psi_{\bm\lambda},s \rangle.$$
It follows from~\eqref{eq:Mgen} that $\check \beta_{\bm\lambda}$ is an unbiased estimator for $\beta_{\bm\lambda},$ so that  $\check s^\star_m$ is an unbiased estimator for $s^\star_m.$ Thanks to Pythagoras' equality, we recover for $\check s^\star_m$ the usual decomposition 
\begin{equation}\label{eq:riskexact}
\E\left[\|s-\check s^\star_m\|^2_{\bm\Psi}\right] = \|s-s^\star_m\|^2_{\bm\Psi} + \sum_{\bm\lambda\in m} \Var(\check \beta_{\bm\lambda}),
\end{equation}
where the first term is a bias term or approximation error and the second term is a variance term or estimation error. When only $\widehat M$ is observed, combining the triangle inequality, the basic inequality~\eqref{eq:basic} and~\eqref{eq:riskexact} easily provides at least an upper-bound akin to~\eqref{eq:riskexact}, up to a residual term.
\begin{prop}\label{prop:onemodel}
For all $\theta >0,$ 
$$ \E\left[\|s-\hat s^\star_m\|^2_{\bm\Psi}\right]
\leq (1+\theta)\left( \|s- s^\star_m\|^2_{\bm\Psi} +  \sum_{\bm\lambda\in m} \Var(\check \beta_{\bm\lambda})\right) + (1+1/\theta)\E\left[ \|\check s^\star_m-\hat s^\star_m\|^2_{\bm \Psi}\right].$$
When $\widehat M=M,$ $\theta$ can be taken equal to 0 and equality holds.
\end{prop}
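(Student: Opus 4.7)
The plan is to reduce the general case to the unbiased case provided by~\eqref{eq:riskexact} via a Young-type splitting. First I would write the pointwise decomposition
$$ s - \widehat s^\star_m = (s - \check s^\star_m) + (\check s^\star_m - \widehat s^\star_m) $$
and expand the squared $\|\cdot\|_{\bm\Psi}$ norm. The cross term is controlled by the elementary inequality $2|\langle a,b\rangle_{\bm\Psi}| \leq \theta \|a\|_{\bm\Psi}^2 + \theta^{-1}\|b\|_{\bm\Psi}^2$ valid for any $\theta>0$ (this is presumably what is referenced as~\eqref{eq:basic}), yielding pointwise
$$ \|s - \widehat s^\star_m\|_{\bm\Psi}^2 \leq (1+\theta)\|s-\check s^\star_m\|_{\bm\Psi}^2 + (1+1/\theta)\|\check s^\star_m - \widehat s^\star_m\|_{\bm\Psi}^2. $$

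Taking expectations and applying the Pythagorean identity~\eqref{eq:riskexact} to rewrite $\E\bigl[\|s-\check s^\star_m\|_{\bm\Psi}^2\bigr]$ as the sum of the bias $\|s-s^\star_m\|_{\bm\Psi}^2$ and the variance term $\sum_{\bm\lambda\in m}\Var(\check\beta_{\bm\lambda})$ produces the announced bound. For the special case $\widehat M = M$, one has $\widehat\beta_{\bm\lambda} = \check\beta_{\bm\lambda}$ for every $\bm\lambda \in m$, hence $\widehat s^\star_m = \check s^\star_m$ and the residual term vanishes; the bound holds then with $\theta = 0$ and becomes the equality~\eqref{eq:riskexact}.

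I do not anticipate a real obstacle: the argument is structural and uses only the Hilbert geometry induced by $\|\cdot\|_{\bm\Psi}$ together with~\eqref{eq:riskexact}. The only point worth flagging is that the cross term $\E\bigl[\langle s-\check s^\star_m,\check s^\star_m - \widehat s^\star_m\rangle_{\bm\Psi}\bigr]$ need not vanish in general, because $\widehat M$ and $M$ are typically constructed from the same underlying randomness; this is precisely why a Young-type splitting with a free parameter $\theta>0$ is needed rather than a direct orthogonal decomposition.
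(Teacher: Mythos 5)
Your proof is correct and follows essentially the same route the paper indicates: decompose $s-\widehat s^\star_m$ as $(s-\check s^\star_m)+(\check s^\star_m-\widehat s^\star_m)$, control the cross term via Cauchy--Schwarz and the basic inequality~\eqref{eq:basic}, then take expectations and invoke~\eqref{eq:riskexact}, with equality at $\theta=0$ when $\widehat M=M$ since then $\widehat s^\star_m=\check s^\star_m$. Your closing remark about why the cross term cannot be made to vanish is exactly the reason the paper resorts to this Young-type splitting rather than an orthogonality argument.
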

\noindent
In all the examples introduced in Section~\ref{sec:examples}, we shall verify that the quadratic risks satisfies, for all $\theta>0,$
\begin{equation}\label{eq:upperonemodel}
\E\left[\|s-\hat s^\star_m\|^2_{\bm\Psi}\right]
\leq c_1 \|s- s^\star_m\|^2_{\bm\Psi} + c_2 \frac{\|s\|_\infty D_m}{\bar n} + r_1(\bar n),
\end{equation}
where $\bar n$ describes the amount of available data, and the residual term $r_1(\bar n)$ does not weigh too much upon the estimation rate.

\subsubsection{Example 1: density estimation (continued).} In this framework, the empirical coefficients are of the form
$$\check  \beta_{\bm\lambda}=\frac1n\sum_{i=1}^n \Psi_{\bm\lambda} (Y_i).$$
As the wavelets are normalized and $s$ is bounded, 
$$ \Var(\check \beta_{\bm\lambda})\leq \frac1n\int_Q\Psi^2_{\bm\lambda} (x) s(x) \d x \leq \frac{\|s\|_\infty}{n},$$
so  
$$ \E\left[\|s-\hat s^\star_m\|^2_{\bm\Psi}\right]
\leq  \|s- s^\star_m\|^2_{\bm\Psi} +  \frac{\|s\|_\infty D_m}{n}.$$
Hence~\eqref{eq:upperonemodel} is satisfied for instance with $\bar n=n,$ $c_1=c_2=1,r_1(n)=0.$

\subsubsection{Example 2: copula density estimation (continued).} In this case,
$$\check  \beta_{\bm\lambda}=\frac1n\sum_{i=1}^n \Psi_{\bm\lambda} \left(F_1(X_{i1}),\ldots,F_d(X_{id})\right),$$
while 
$$\widehat  \beta_{\bm\lambda}=\frac1n\sum_{i=1}^n \Psi_{\bm\lambda} \left(\hat F_{n1}(X_{i1}),\ldots,\hat F_{nd}(X_{id})\right).$$
As in Example 1, $\Var(\check \beta_{\bm\lambda})\leq  \|s\|_\infty/ n.$ Besides we prove in Section~\ref{sec:proofscopula} the following upper-bound for the residual terms.
\begin{prop}\label{prop:copulaone} Under Assumption (L), for all $m\in\mathcal M^{\mathcal P},$
$$ \E[\|\check s^\star_m-\hat s^\star_m\|^2] \leq C(\kappa,d)L^{d-1}_\bullet 2^{4L_\bullet} \log(n)/n.$$
\end{prop}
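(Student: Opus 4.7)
The strategy is to reduce everything to sup-norm deviations $\delta_k=\sup_x|F_k(x)-\hat F_{nk}(x)|$ of the marginal empirical distribution functions, which are controlled by the Dvoretzky--Kiefer--Wolfowitz (DKW) inequality. The mechanism allowing this reduction is the Lipschitz property of the analysis wavelets granted by Assumption~(L).

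First, writing
$$\check s^\star_m-\hat s^\star_m=\sum_{\bm\lambda\in m}(\check\beta_{\bm\lambda}-\hat\beta_{\bm\lambda})\,\Psi^\star_{\bm\lambda}$$
and using that $\{\Psi^\star_{\bm\lambda}\}_{\bm\lambda\in\Lambda}$ is a Riesz basis of $\L^2([0,1]^d)$ (inherited by tensorization from $S.i)$ and $W.i)$), we get $\|\check s^\star_m-\hat s^\star_m\|^2\leq C(\kappa,d)\sum_{\bm\lambda\in m}(\check\beta_{\bm\lambda}-\hat\beta_{\bm\lambda})^2$. For fixed $\bm\lambda\in\bm\nabla_{\bm j}$, I exploit the tensor structure $\Psi_{\bm\lambda}=\prod_k\psi_{\lambda_k}$ and telescope coordinate by coordinate. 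Combining Assumption~(L), which gives $\|\psi_{\lambda_k}\|_L\leq\kappa\, 2^{3j_k/2}$, with the sup-norm bounds $\|\psi_{\lambda_l}\|_\infty\leq\kappa\, 2^{j_l/2}$ from $W.vi)$, I obtain
$$|\Psi_{\bm\lambda}(y)-\Psi_{\bm\lambda}(z)|\leq \kappa^d\, 2^{|\bm j|/2}\sum_{k=1}^d 2^{j_k}|y_k-z_k|.$$
Applying this to $y=(F_k(X_{ik}))_k$ and $z=(\hat F_{nk}(X_{ik}))_k$ and averaging over $i=1,\ldots,n$ yields
$$|\check\beta_{\bm\lambda}-\hat\beta_{\bm\lambda}|\leq \kappa^d\, 2^{|\bm j|/2}\sum_{k=1}^d 2^{j_k}\delta_k.$$

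Squaring, using Cauchy--Schwarz, and bounding $\max_k j_k\leq|\bm j|$ gives $(\check\beta_{\bm\lambda}-\hat\beta_{\bm\lambda})^2\leq C(\kappa,d)\, 2^{3|\bm j|}\varepsilon_n^2$ with $\varepsilon_n:=\max_k\delta_k$. Since $|\bm\nabla_{\bm j}|\leq C(\kappa,d)2^{|\bm j|}$ (from $S.ii)$) and since the number of $\bm j\in\N_{j_0}^d$ with $|\bm j|=\ell$ is at most $C(d)\ell^{d-1}$, summing over $\bm\lambda\in m\subset m_\bullet$ produces the geometric series
$$\sum_{\bm\lambda\in m}(\check\beta_{\bm\lambda}-\hat\beta_{\bm\lambda})^2\leq C(\kappa,d)\sum_{\ell=dj_0}^{L_\bullet}\ell^{d-1}2^{4\ell}\,\varepsilon_n^2\leq C(\kappa,d)L_\bullet^{d-1}2^{4L_\bullet}\varepsilon_n^2.$$

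It remains to take expectations. The DKW inequality gives $\P(\delta_k>t)\leq 2e^{-2nt^2}$, hence $\P(\varepsilon_n>t)\leq 2d\,e^{-2nt^2}$. I split on the event $\mathcal A=\{\varepsilon_n^2\leq c\log n/n\}$: on $\mathcal A$, the above produces exactly the claimed factor $L_\bullet^{d-1}2^{4L_\bullet}\log n/n$; on $\mathcal A^c$ I fall back on the crude bound $(\check\beta_{\bm\lambda}-\hat\beta_{\bm\lambda})^2\leq 4\|\Psi_{\bm\lambda}\|_\infty^2\leq C(\kappa,d)2^{|\bm j|}$, which after summation is of order $L_\bullet^{d-1}2^{2L_\bullet}$, and choosing $c$ large enough so that $\P(\mathcal A^c)\leq 2dn^{-2c}$ makes this contribution negligible. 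The main difficulty is obtaining the correct exponent $4L_\bullet$: the Lipschitz constants contribute a factor $2^{3|\bm j|}$ per squared coefficient, and it is essential to use the sharp bound $\max_k j_k\leq|\bm j|$ together with the polynomial (not exponential) count $\ell^{d-1}$ of hyperbolic indices at level $\ell$; a cruder counting would yield a worse exponent in $L_\bullet$.
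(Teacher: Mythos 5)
Your proof is correct and follows essentially the same route as the paper's: the Lipschitz bound on the tensorized wavelets obtained from Assumption (L) together with the sup-norm bounds, Massart's DKW inequality for the empirical marginals, and a count of the hyperbolic wavelet indices up to level $L_\bullet$ giving $L_\bullet^{d-1}2^{L_\bullet}$ coefficients each contributing a squared Lipschitz factor $2^{3L_\bullet}$. The only cosmetic differences are that the paper uses the uniform Lipschitz constant $\kappa^d 2^{3L_\bullet/2}$ and multiplies by $D_{m_\bullet}$ instead of your level-by-level geometric summation, and it splits on the DKW event $\Omega(z)$ with $2z^2=\log n$ (bounding $|F_k-\hat F_{nk}|\leq 1$ on the complement) rather than on your threshold event for $\varepsilon_n^2$ — both yield the stated bound.
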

\noindent
Hence choosing $\bar n=n,$ $2^{4L_\bullet}=\sqrt{n}/\log(n),$  $c_1=c_2=2,$ and $r_1(n)=C(\kappa,d)\log( n)^{d-1}/\sqrt{n}$ yields~\eqref{eq:upperonemodel}.

\subsubsection{Example 3: Poisson intensity estimation (continued).} In this case,
$$\check  \beta_{\bm\lambda}=\frac{1}{\text{Vol}_d(Q)}\int_Q \Psi_{\bm\lambda} (x) N(\d x).$$
From Campbell's formula,
$$ \Var(\check \beta_{\bm\lambda})= \frac{1}{\text{Vol}_d(Q)} \int_Q \Psi^2_{\bm\lambda} (x) s(x) \d x$$
so  
$$ \E\left[\|s-\hat s^\star_m\|^2_{\bm\Psi}\right]
\leq  \|s- s^\star_m\|^2_{\bm\Psi} +  \frac{\|s\|_\infty D_m}{\bar n},$$
with $\bar n=\text{Vol}_d(Q).$

\subsubsection{Example 4: L\'evy jump intensity estimation with continuous time observations (continued).} In this case,
$$\check  \beta_{\bm\lambda}=\frac{1}{T}\iint\limits_{[0,T]\times Q} \Psi_{\bm\lambda} (x) N(\d t, \d x).$$
From Campbell's formula again,
$$ \Var(\check \beta_{\bm\lambda})= \frac{1}{T^2}\iint\limits_{[0,T]\times Q} \Psi^2_{\bm\lambda} (x) s(x)\d t \d x$$
so  
$$ \E\left[\|s-\hat s^\star_m\|^2_{\bm\Psi}\right]
\leq  \|s- s^\star_m\|^2_{\bm\Psi} +  \frac{\|s\|_\infty D_m}{\bar n},$$
with $\bar n=T.$

\subsubsection{Example 5: L\'evy jump intensity estimation with discrete time observations (continued).} In this case, the empirical coefficients and their approximate counterparts are of the form 
$$\check  \beta_{\bm\lambda}=\frac{1}{n\Delta}\iint\limits_{[0,n\Delta]\times Q} \Psi_{\bm\lambda} (x) N(\d t, \d x)\quad\text{and}\quad \widehat  \beta_{\bm\lambda}=\frac{1}{n\Delta}\sum_{i=1}^n \Psi_{\bm\lambda} (X_{i\Delta}-X_{(i-1)\Delta}).$$
We deduce as previously that  $ \Var(\check \beta_{\bm\lambda})\leq  \|s\|_\infty/\bar n$ with $\bar n=n\Delta.$ Besides we can bound the residual term thanks to the following proposition, proved in Section~\ref{sec:proofslevydisc}.

\begin{prop}\label{prop:disclevyone} Under Assumption (L), for all $m\in\mathcal M^{\mathcal P},$ 
$$\E[\|\check s^\star_m-\hat s^\star_m\|^2]
\leq 8\frac{\|s\|_\infty D_m}{n\Delta}+ C(\kappa,d,f,Q) L_\bullet^{d-1} \frac{2^{4L_\bullet}n\Delta^3+ 2^{3L_\bullet}\Delta }{n\Delta}.$$
provided $\Delta$ is small enough.
\end{prop}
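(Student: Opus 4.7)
The plan is to reduce the norm $\|\check s^\star_m - \hat s^\star_m\|^2$ to a sum of coefficientwise errors and then control each one via a jump-counting decomposition of the L\'evy increment $Y_i = X_{i\Delta}-X_{(i-1)\Delta}$. The tensorized Riesz basis property yields
$$\|\check s^\star_m - \hat s^\star_m\|^2 \leq C(\kappa,d)\sum_{\bm\lambda\in m}(\check\beta_{\bm\lambda} - \hat\beta_{\bm\lambda})^2,$$
while stationarity and independence of L\'evy increments give $\check\beta_{\bm\lambda} - \hat\beta_{\bm\lambda} = (n\Delta)^{-1}\sum_{i=1}^n R_i(\bm\lambda)$, where
$$R_i(\bm\lambda) = \sum_{\substack{(i-1)\Delta < t \leq i\Delta\\ \Delta X_t \in Q}}\Psi_{\bm\lambda}(\Delta X_t) - \Psi_{\bm\lambda}(Y_i)$$
are i.i.d.\ in $i$. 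To expose the main term $8\|s\|_\infty D_m/(n\Delta)$ with its explicit constant, I would first split $\E[(\check\beta - \hat\beta)^2]\leq 2\Var(\check\beta) + 2\Var(\hat\beta) + (\E\check\beta - \E\hat\beta)^2$, bounding both variances by $\|s\|_\infty/(n\Delta) + O(1/n)$ via Campbell's formula for $\check\beta$ and the small-time identity $\E[\Psi_{\bm\lambda}^2(Y_1)] = \Delta\int\Psi_{\bm\lambda}^2 f + O(\Delta^2)$ for $\hat\beta$.

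The technical heart is then the bias $\E R_1$ together with the residual $R_1$--variance, which I handle via the L\'evy--It\^o splitting $Y_i = D_i + J^Q_i$, where $J^Q_i$ is the sum of jumps of size in $Q$ during $((i-1)\Delta,i\Delta]$ and $D_i$ collects drift, Brownian, and out-of-$Q$ jumps (independent of $J^Q_i$). Letting $N_i\sim\mathrm{Poi}(\Delta\int_Q f)$ count the $Q$-jumps, I split $R_1$ on $\{N_1=0\}, \{N_1=1\}, \{N_1\geq 2\}$. On $\{N_1=1\}$ with jump $\zeta$, $R_1 = \Psi_{\bm\lambda}(\zeta) - \Psi_{\bm\lambda}(\zeta + D_1)$, so Assumption (L) combined with $\E|D_1|^2 \lesssim \Delta$ yields $\E[R_1^2\BBone_{N_1=1}]\lesssim \|\Psi_{\bm\lambda}\|_L^2\, \Delta^2$. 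On $\{N_1=0\}$, $R_1 = -\Psi_{\bm\lambda}(D_1)$, and a L\'evy small-time density bound $p_{D_1}(x)\leq C(f,Q)\Delta$ on $Q$ gives $\E[R_1^2\BBone_{N_1=0}]\leq C\Delta$. On $\{N_1\geq 2\}$, the trivial bound $|R_1|\leq (N_1+1)\|\Psi_{\bm\lambda}\|_\infty$ with $\P(N_1\geq 2)\lesssim \Delta^2$ yields $\E[R_1^2\BBone_{N_1\geq 2}]\lesssim \|\Psi_{\bm\lambda}\|_\infty^2\,\Delta^2$. A parallel Taylor-type estimate $|\E[\Psi_{\bm\lambda}(\zeta+D_1)-\Psi_{\bm\lambda}(\zeta)]|\lesssim \|\nabla\Psi_{\bm\lambda}\|_\infty|\E D_1|$ with $|\E D_1|\lesssim \Delta$ controls $(\E R_1)^2$.

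Summing over $\bm\lambda\in m$ then exploits the hyperbolic combinatorial estimates
$$\sum_{\bm\lambda\in m}\|\Psi_{\bm\lambda}\|_\infty^2 \lesssim L_\bullet^{d-1}\,2^{2L_\bullet},\qquad \sum_{\bm\lambda\in m}\|\Psi_{\bm\lambda}\|_L^2 \lesssim L_\bullet^{d-1}\,2^{4L_\bullet},$$
which follow from $|\bm\nabla_{\bm j}|=2^{|\bm j|}$, $|\bm J_\ell|\sim \ell^{d-1}$, and the tensor-product Lipschitz bound $\|\Psi_{\bm\lambda}\|_L \lesssim 2^{|\bm j|/2}\sum_k 2^{j_k}$. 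Multiplying by the Riesz constant and collecting all terms produces the $\|s\|_\infty D_m/(n\Delta)$ leading term and the residual $C(\kappa,d,f,Q)\,L_\bullet^{d-1}(2^{4L_\bullet}n\Delta^3 + 2^{3L_\bullet}\Delta)/(n\Delta)$. The main obstacle is the density estimate $p_{D_1}(x)\leq C(f,Q,\kappa,d)\Delta$ uniformly on $Q$: since $D_1$ still carries the (possibly infinite-activity) small-jump component of the L\'evy process, this is not immediate from a compound-Poisson picture; it is where the hypothesis ``$\Delta$ small enough'' enters and requires a Picard-type small-time estimate exploiting that $Q$ is compact and bounded away from the origin at which the L\'evy measure may blow up.
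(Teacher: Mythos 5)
Your top-level architecture is the same as the paper's (reduce to coefficientwise errors, split into two variances plus a squared bias, bound $\|\Psi_{\bm\lambda}\|_\infty,\|\Psi_{\bm\lambda}\|_L$ by powers of $2^{L_\bullet}$ and count $D_{m_\bullet}\sim L_\bullet^{d-1}2^{L_\bullet}$), but the core small-time analysis rests on a decomposition that does not work. You write $Y_1=D_1+J^Q_1$ where $D_1$ collects the drift, the Brownian part and \emph{all} jumps outside $Q$. First problem: $D_1$ then contains the arbitrarily large jumps that miss $Q$, and the framework makes no integrability assumption on the L\'evy measure away from the origin, so $\E\|D_1\|^2$ and even $\E[D_1]$ may be infinite; the bounds $\E\|D_1\|^2\lesssim\Delta$ and $|\E D_1|\lesssim\Delta$ that drive your $\{N_1=1\}$ and bias estimates are simply unavailable. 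The paper cuts by jump \emph{size}, not by membership in $Q$: it writes $\mathbf X=\mathbf X^\varepsilon+\tilde{\mathbf X}^\varepsilon$ with $\varepsilon<\min(d_Q/4,\inf_{x\in Q}\|x\|)$, so the residual $X^\varepsilon_\Delta$ has jumps bounded by $\varepsilon$ (hence $\E\|X^\varepsilon_\Delta\|^2\leq C\Delta$ by Millar's theorem), while all jumps of norm $>\varepsilon$, whether in $Q$ or not, go into a compound Poisson part with finite intensity $\lambda_\varepsilon$.

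Second, and more seriously, the bias is not controlled. To produce the $2^{4L_\bullet}n\Delta^3$ term you need $|D_\Delta(\Psi_{\bm\lambda})|=|\E[\Psi_{\bm\lambda}(X_\Delta)]/\Delta-\langle\Psi_{\bm\lambda},s\rangle|\lesssim 2^{3L_\bullet/2}\Delta$, i.e.\ $|\E R_1|\lesssim 2^{3L_\bullet/2}\Delta^2$. Your plan addresses the bias only on $\{N_1=1\}$, and there the Taylor step $|\E[\Psi_{\bm\lambda}(\zeta+D_1)-\Psi_{\bm\lambda}(\zeta)]|\lesssim\|\nabla\Psi_{\bm\lambda}\|_\infty|\E D_1|$ is not available under Assumption (L): the wavelets are only Lipschitz and vanish off $Q$, so the best first-order bound is $\|\Psi_{\bm\lambda}\|_L\,\E\|D_1\|\sim\sqrt\Delta$, which is too large (the paper gets $O(\Delta)$ there by restricting to events where $\|X^\varepsilon_\Delta\|$ exceeds a fixed threshold, using second moments and Markov bounds for the boundary-crossing indicators). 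The $\{N_1=0\}$ bias term $\E[\Psi_{\bm\lambda}(D_1)\BBone_{N_1=0}]$ is never treated: your density bound $p_{D_1}\leq C\Delta$ only yields $|\E[\Psi_{\bm\lambda}(D_1)]|\leq C\Delta\|\Psi_{\bm\lambda}\|_1$, i.e.\ an $O(1)$ contribution to $D_\Delta(\Psi_{\bm\lambda})$, which swamps the claimed residual; and no essentially better bound can hold in your decomposition, because a single out-of-$Q$ jump landing near $\partial Q$ followed by a small fluctuation puts $D_1$ inside $Q$ with probability of polynomial order (about $\Delta^{3/2}$ with a Gaussian or infinite-activity component), not $o(\Delta^2)$, so the resulting contribution $\|\Psi_{\bm\lambda}\|_\infty\sqrt\Delta$ already exceeds the target $2^{3L_\bullet/2}\Delta$ in the relevant regime $n\Delta^2\lesssim1$, $2^{4L_\bullet}=n\Delta$. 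This is precisely what the paper's $\varepsilon$-cut buys: its residual process must cross the macroscopic gap $d_Q$ using jumps of size $\leq\varepsilon<d_Q/4$, so the small-time estimate it cites (R\"uschendorf--Woerner, Figueroa-L\'opez--Houdr\'e) gives $\P(X^\varepsilon_\Delta\in Q)\leq C\Delta^{d_Q/(2\varepsilon)}=o(\Delta^2)$, and no density of the increment is ever needed -- which also disposes of your acknowledged obstacle, since $D_1$ need not possess a density at all under the paper's assumptions.
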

\noindent
Assuming $n\Delta^2$ stays bounded while $n\Delta\rightarrow\infty$ as  $n\rightarrow \infty,$ and choosing $2^{4L_\bullet}=n\Delta,$  we deduce that~\eqref{eq:upperonemodel} is satisfied under Assumption (L) with $\bar n=n\Delta, c_1=2, c_2=18,$ and $r_1(\bar n)=C(\kappa,d,f,Q)\log(\bar n)^{d-1}/\bar n.$ Notice that these assumptions on $n$ and $\Delta$ are classical in the so-called framework of high-frequency observations.

\textit{Remark:} Proposition~\ref{prop:disclevyone} extends~\cite{FigueroaDiscrete} to a multivariate model with a complex structure due to the use of hyperbolic wavelets, instead of isotropic ones. Yet, the extension is not so straightforward, so we give a detailed proof in Section~\ref{sec:proofs}.

\section{Wavelet pyramid model selection}\label{sec:selection}

The upper-bound~\eqref{eq:upperonemodel} for the risk on one pyramidal model suggests that a good model should be large enough so that the approximation error is small, and small enough so that the estimation error is small. Without prior knowledge on the function $s$ to estimate, choosing the best pyramidal model is thus impossible. In this section, we describe a data-driven procedure that selects the best pyramidal model from the data, without using any smoothness assumption on $s.$ We provide theoretical results that guarantee the performance of such a procedure. We underline how these properties are linked with the structure of the collection of models.

	\subsection{Penalized pyramid selection} When $M$ is observed, we deduce from~\eqref{eq:riskexact} that
\begin{equation*}
\E\left[\|s-\check s^\star_m\|^2_{\bm\Psi}\right] -  \|s\|^2_{\bm\Psi}  = -\|s^\star_m\|^2_{\bm\Psi} + \sum_{\bm\lambda\in m} \Var(\check \beta_{\bm\lambda})
\end{equation*}
and from~\eqref{eq:checksm} that $\gamma(\check s^\star_m)=-\|\check s^\star_m\|^2_{\bm\Psi}.$ Following the work of~\cite{BBM}, we introduce a penalty function $\pen : \mathcal M^{\mathcal P} \rightarrow \R^+$ and choose a best pyramidal model from the data defined as 
$$\hat m^{\mathcal P}=\argmin{m\in\mathcal M^{\mathcal P}}{\left(\hat \gamma(\hat s_m^\star) + \pen(m)\right)}.$$
In order to choose the pyramidal model with smallest quadratic risk, the penalty $\pen(m)$ is expected to behave roughly as the estimation error within model $m.$ We provide such a penalty in the following Section. Our final estimator for $s$ is then  
$$\tilde s^{\mathcal P}= \hat s_{\hat m^{\mathcal P}}^\star.$$

	\subsection{Combinatorial complexity and choice of the penalty function}
As widely examplified in~\cite{Massart,BGH} for instance, the choice of an adequate penalty depends on the combinatorial complexity of the collection of models, which is measured through the index 
\begin{equation}\label{eq:combindex}
\max_{dj_0+1 \leq \ell_1 \leq L_\bullet +1 } \frac{\log\left (\sharp \mathcal M^\mathcal P_{\ell_1}\right)}{D(\ell_1)},
\end{equation}
where $D(\ell_1)$ is the common dimension of all pyramidal models in $\mathcal M^\mathcal P_{\ell_1}.$ Ideally, this index should be upper-bounded independently of the sample size for the resulting model selection procedure to reach the optimal estimation rate. The following proposition describes the combinatorial complexity of the collection of pyramidal models.

\begin{prop}\label{prop:combinatorial} Let $M=2+B/2^{j_0-1}.$ For all $\ell_1\in\{dj_0+1,\ldots, L_\bullet +1\}$ and all $k\in\{0,\ldots,L_\bullet-\ell_1\},$ let
\begin{equation}\label{eq:N}
N(\ell_1,k)=\lfloor 2\sharp U\bm{\nabla}(\ell_1+k)(k+2)^{-(d+2)}2^{-k} M^{-d}\rfloor
\end{equation} 
and $D(\ell_1)$ be the common dimension of all models in $\mathcal M^\mathcal P_{\ell_1}.$ There exists positive reals $\kappa_1(d),$ $\kappa_2(j_0,B,d)$ and $\kappa_3(j_0,B,d)$ such that
\begin{equation*}\label{eq:cardinalpart} 
\kappa_1(d) (\ell_1-dj_0+d-2)^{d-1}2^{\ell_1} \leq D(\ell_1) \leq   \kappa_2(j_0,B,d)(\ell_1-dj_0+d-2)^{d-1}2^{\ell_1}
\end{equation*} 
and 
\begin{equation*}\label{eq:cardinalsubfamily} 
\log\left (\sharp \mathcal M^\mathcal P_{\ell_1}\right) \leq \kappa_3(j_0,B,d)  D(\ell_1).
\end{equation*} 
\end{prop}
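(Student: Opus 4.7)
\textbf{Proof plan for Proposition \ref{prop:combinatorial}.} The proof is a direct combinatorial calculation relying on three elementary ingredients: (i) the number of decompositions $\bm j=(j_1,\ldots,j_d)$ with $j_k\geq j_0$ and $|\bm j|=\ell$ is $\sharp \bm J_\ell = \binom{\ell-dj_0+d-1}{d-1}$; (ii) by properties $S.ii)$ and $W.ii)$, $\sharp\nabla_{j_k}=2^{j_k-1}$ for $j_k>j_0$ and $\sharp\nabla_{j_0}=2^{j_0-1}M$ with $M=2+B/2^{j_0-1}$, so uniformly over $\bm j\in \bm J_\ell$ one has
\[
2^{\ell-d}\leq \sharp\bm\nabla_{\bm j}\leq 2^{\ell-d}M^d;
\]
(iii) the standard inequality $\log\binom{n}{k}\leq k\log(en/k)$.

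First I would prove the dimension bounds. By construction,
\[
D(\ell_1)=\sum_{\ell=dj_0}^{\ell_1-1}\sharp U\bm\nabla(\ell) + \sum_{k=0}^{L_\bullet-\ell_1} N(\ell_1,k),
\]
where $2^{\ell-d}\sharp \bm J_\ell\leq \sharp U\bm\nabla(\ell)\leq 2^{\ell-d}M^d\sharp\bm J_\ell$. The lower bound on $D(\ell_1)$ follows from keeping just the term $\ell=\ell_1-1$ in the first sum and using $\binom{\ell_1-dj_0+d-2}{d-1}\geq C(d)(\ell_1-dj_0+d-2)^{d-1}$. For the upper bound I would treat the two sums separately: the first sum is a "super-geometric" series in $\ell$ (the ratio of successive terms exceeds $2$ for $\ell$ large), hence dominated by a constant times its last term $2^{\ell_1-1}\binom{\ell_1-dj_0+d-2}{d-1}M^d$; in the second sum, plugging the upper bound on $\sharp U\bm\nabla(\ell_1+k)$ into the definition \eqref{eq:N} of $N(\ell_1,k)$ cancels the $2^k M^d$ factor and leaves
\[
N(\ell_1,k)\leq 2\cdot 2^{\ell_1-d}\binom{\ell_1+k-dj_0+d-1}{d-1}(k+2)^{-(d+2)}.
\]
Splitting $\binom{\ell_1+k-dj_0+d-1}{d-1}\leq C(d)\bigl[(\ell_1-dj_0+d-1)^{d-1}+k^{d-1}\bigr]$ and using the convergence of $\sum_k (k+2)^{-(d+2)}$ and $\sum_k k^{d-1}(k+2)^{-(d+2)}$, this second sum is also bounded by $C(j_0,B,d)\,2^{\ell_1}(\ell_1-dj_0+d-2)^{d-1}$, which yields the desired $\kappa_2$.

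Next I would bound $\log\sharp\mathcal M^\mathcal P_{\ell_1}$. Since a model in $\mathcal M^\mathcal P_{\ell_1}$ is specified by independently choosing subsets $m(\ell_1+k)\subset U\bm\nabla(\ell_1+k)$ of cardinality $N(\ell_1,k)$,
\[
\log\sharp\mathcal M^\mathcal P_{\ell_1}=\sum_{k=0}^{L_\bullet-\ell_1}\log\binom{\sharp U\bm\nabla(\ell_1+k)}{N(\ell_1,k)}.
\]
By (iii), each term is at most $N(\ell_1,k)\log\bigl(e\sharp U\bm\nabla(\ell_1+k)/N(\ell_1,k)\bigr)$. The choice \eqref{eq:N} is engineered precisely so that $\sharp U\bm\nabla(\ell_1+k)/N(\ell_1,k)\leq C(d,M)(k+2)^{d+2}2^{k}M^d$, hence the logarithm is bounded by $C(d,M)(k+1)$. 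Using once more the upper bound on $N(\ell_1,k)$ displayed above, the factor $(k+1)(k+2)^{-(d+2)}$ appears, which is summable; the same $\ell$-versus-$k$ splitting of the binomial as in the previous paragraph gives
\[
\log\sharp\mathcal M^\mathcal P_{\ell_1}\leq \kappa_3(j_0,B,d)\,2^{\ell_1}(\ell_1-dj_0+d-2)^{d-1}\leq \kappa_3(j_0,B,d)\,D(\ell_1).
\]

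The main obstacle is purely technical: keeping track of the interplay between the arithmetic growth in $(\ell_1-dj_0+d-2)^{d-1}$ and the polynomial weights $(k+2)^{-(d+2)}$ so that the $k$-sum converges while still delivering a bound proportional to $D(\ell_1)$. The exponent $d+2$ in \eqref{eq:N} is what gives one power for the logarithm estimate and $d+1$ leftover powers to compensate the factor $k^{d-1}$ coming from the binomial and still leave an integrable $k^{-2}$ tail.
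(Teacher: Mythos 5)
Your proof is correct and takes essentially the same route as the paper's: the same decomposition $D(\ell_1)=\sum_{\ell=dj_0}^{\ell_1-1}\sharp U\bm\nabla(\ell)+\sum_{k}N(\ell_1,k)$, the same counts $\sharp \bm J_\ell=\binom{\ell-dj_0+d-1}{d-1}$ and $2^{\ell-d}\leq\sharp\bm\nabla_{\bm j}\leq M^d 2^{\ell-d}$, and the same bound $\log\binom{n}{k}\leq k\log(en/k)$ combined with the summable weights $(k+2)^{-(d+2)}$. The only cosmetic differences are your additive splitting of the binomial coefficient in $k$ (the paper instead uses a factor $(1+k/(d-1))^{d-1}$ inside a convergent series) and that you control $\log\bigl(e\,\sharp U\bm\nabla(\ell_1+k)/N(\ell_1,k)\bigr)$ by lower-bounding the floor in \eqref{eq:N}, which tacitly requires $N(\ell_1,k)\geq 1$ (harmless, since a level with $N(\ell_1,k)=0$ contributes nothing to $\log\sharp\mathcal M^{\mathcal P}_{\ell_1}$), whereas the paper sidesteps this by using the monotonicity of $x\mapsto x\log(eU/x)$ on $[0,U]$.
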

\noindent
We remind that $B$ is defined in Section~\ref{sec:uniwave_assumptions}(Assumption $S.ii)$). Possible values for $\kappa_1,\kappa_2$ and $\kappa_3$ are given in the proof, which is postponed to Section~\ref{sec:proofcombinatorial}. In the same way, we could prove a matching lower-bound for $\log\left (\sharp \mathcal M^\mathcal P_{\ell_1}\right)$ for large enough $\ell_1,$ so that the whole family $\mathcal M^\mathcal P$ contains of order of $L_\bullet^{d-1} 2^{L_\bullet}$ models. Typically, we will choose $L_\bullet$ such that $2^{L_\bullet}$ is a power of the sample size $\bar n.$ So while $\mathcal M^\mathcal P$ contains at least  an exponential number of models, the number of models per dimension is moderate enough so that the combinatorial index~\eqref{eq:combindex} bounded.

From now on, we assume that~\eqref{eq:N} is satisfied, as well as the following hypotheses. For all subfamily $\mathcal T$ of $S^\star_{m_\bullet},$ let 
$$\mathcal Z(\mathcal T)=\sup_{t\in\mathcal T} \left(\int_Q \sum_{\lambda \in m_\bullet} \langle t, \Psi_{\bm\lambda} \rangle \Psi_{\bm \lambda} \d M   -\langle s,t\rangle_{\bm \Psi}\right).$$

\medskip
\noindent
\textit{\textbf{Assumption (Conc).} There exist positive reals $\bar n,\kappa'_1,\kappa'_2,\kappa'_3$ such that, for all countable subfamily $\mathcal T$ of $\left\{t\in S^\star_{m_\bullet} | \| t\|_{\bm\Psi}=1\right\}$ satisfying
$$\sup_{t\in\mathcal T}\left\|\sum_{\lambda \in m_\bullet} \langle t, \Psi_{\bm\lambda} \rangle \Psi_{\bm \lambda}\right \|_\infty  \leq B(\mathcal T)$$
for some positive constant $B(\mathcal T),$ we have, for all $x>0,$
$$\P\left(\mathcal Z(\mathcal T) \geq \kappa'_1 \E\left[\mathcal Z(\mathcal T)\right] + \sqrt{\kappa'_2 \|s\|_\infty \frac{x}{\bar n}} + \kappa'_3 B(\mathcal T) \frac{x}{\bar n}\right) \leq \exp(-x).$$}

\medskip
\noindent
\textit{\textbf{Assumption (Var).} There exist a nonnegative constant $\kappa'_4$ and a collection of estimators $(\hat \sigma^2_{\bm \lambda})_{{\bm \lambda}\in m_\bullet}$ such that, for all ${\bm \lambda} \in m_\bullet$,
$$\E\left[\hat \sigma^2_{\bm \lambda}\right] \leq \kappa'_4 \max(\|s\|_\infty,1).$$ 
Besides there exist a nonnegative constant $\kappa'_5,$ a nonnegative function $w$ such that $w(\bar n)/\bar n \xrightarrow[\bar n \to \infty] {} 0$, and a measurable event $\Omega_\sigma$ on which, for all ${\bm \lambda} \in m_\bullet,$
$$\Var(\check \beta_{\bm \lambda}) \leq \kappa'_5 \frac{\max\{\hat \sigma^2_{\bm \lambda},1\}}{\bar n}$$
and such that
$$p_\sigma:=\P(\Omega_{\sigma}^c)\leq \frac{w(\bar n)}{\bar n}.$$}

\medskip
\noindent
\textit{\textbf{Assumption (Rem).} For the same function $w$ as in Assumption \textbf{(Var)} and some nonnegative constant  $\kappa'_6,$ 
$$\E\left[\|\check s^\star_{m} - \hat s^\star_{m}\|^2_{\bm \Psi}\right] \leq \kappa'_6\frac{\|s\|_\infty D_m}{\bar n} + \frac{w(\bar n)}{\bar n}, \text{ for all } {m\subset  m_\bullet},$$
and 
$$\max\left\{\frac{1}{\bar n (\log(\bar n)/d)^{(d+1)/2}} \sqrt{\E\left[\|\check s^\star_{m_\bullet} - \hat s^\star_{m_\bullet}\|^4_{\bm \Psi}\right]},\sqrt{p_\sigma\E\left[\|\check s^\star_{m_\bullet} - \hat s^\star_{m_\bullet}\|^4_{\bm \Psi}\right]} \right\}\leq  \frac{w(\bar n)}{\bar n}.$$
}

\noindent 
Assumption \textbf{(Conc)} describes how the random measure $M$ concentrates around the measure to estimate. Assumption \textbf{(Var)} ensures that we can estimate the variance terms $\E\left[\|s^\star_m - \check s^\star_m \|^2_{\bm \Psi}\right]$ over each $m\in\mathcal M^{\mathcal P}.$ Last, Assumption \textbf{(Rem)} describes how close $\widehat M$ is to $M.$

\begin{theo}\label{theo:choosepen} Assume that~\eqref{eq:N}, Assumptions \textbf{(Conc)}, \textbf{(Var)},\textbf{(Rem)} are satisfied, and that $\max(\|s\|_\infty,1) \leq \bar R.$ Choose $L_\bullet$ such that
$$2^{L_\bullet}\leq \frac{\bar n}{\left((\log \bar n)/d\right)^{2d}}$$
and a penalty of the form  
$$\pen(m)= \sum_{\lambda\in m}\frac{c_1\hat \sigma^2_{\bm\lambda}+c_2\bar R}{\bar n}, m\in\mathcal M^\mathcal P.$$
If $c_1,c_2$ are positive and large enough, then
$$\E\left[\|s-\tilde s^{\mathcal P}\|^2_{\bm \Psi} \right] \leq C_1 \min_{m\in\mathcal M^{\mathcal P}}\left (\|s-s^\star_m\|^2_{\bm \Psi}+  \frac{\bar R D_m}{\bar n}\right) +C_2 \frac{\max\left\{\|s\|^2_{\bm \Psi},\|s\|_\infty,1\right\}}{\bar n}\left(1+\left(\log (\bar n)/d\right)^{-3(d+1)/2} +w(\bar n)\right)$$
where $C_1$ may depend on $\kappa'_1,\kappa'_2,\kappa'_4,\kappa'_5,\kappa'_6,c_1,c_2$ and $C_2$ may depend $\kappa'_1,\kappa'_2,\kappa'_3,\kappa'_7,j_0,d.$
\end{theo}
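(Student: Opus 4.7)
The proof will follow the classical Birgé-Massart penalized model selection scheme, adapted to the biorthogonal setting. Throughout, write $\nu(t)=\sum_{\bm\lambda\in\Lambda}\langle t,\Psi_{\bm\lambda}\rangle(\check\beta_{\bm\lambda}-\beta_{\bm\lambda})$ for the centered empirical process derived from $M$; a direct computation based on~\eqref{eq:Mgen} yields the key identity $\gamma(t)-\gamma(s)=\|t-s\|_{\bm\Psi}^2-2\nu(t-s)$. Moreover, exploiting biorthogonality, for any $t=\sum_{\bm\lambda\in m_\bullet}c_{\bm\lambda}\Psi^\star_{\bm\lambda}\in S^\star_{m_\bullet}$ one has $\nu(t)=\int t^\sharp\,\d M-\langle t^\sharp,s\rangle$ where $t^\sharp:=\sum_{\bm\lambda\in m_\bullet}c_{\bm\lambda}\Psi_{\bm\lambda}$, so that the supremum of $\nu$ over any subset of $S^\star_{m_\bullet}$ coincides with $\mathcal Z(\mathcal T)$ of Assumption \textbf{(Conc)}. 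Finally, again by biorthogonality, $\|\check s^\star_{m_\bullet}-\hat s^\star_{m_\bullet}\|_{\bm\Psi}^2=\sum_{\bm\lambda\in m_\bullet}(\check\beta_{\bm\lambda}-\widehat\beta_{\bm\lambda})^2$, and therefore $|\hat\gamma(t)-\gamma(t)|\le 2\|t\|_{\bm\Psi}\,\|\check s^\star_{m_\bullet}-\hat s^\star_{m_\bullet}\|_{\bm\Psi}$ on $S^\star_{m_\bullet}$.

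First, I would derive the basic inequality. For any $m\in\mathcal M^{\mathcal P}$, since $s^\star_m\in S^\star_m$ and $\hat s^\star_m$ minimises $\hat\gamma$ on $S^\star_m$, the definition of $\hat m^{\mathcal P}$ gives $\hat\gamma(\tilde s^{\mathcal P})+\pen(\hat m^{\mathcal P})\le\hat\gamma(s^\star_m)+\pen(m)$. Replacing $\hat\gamma$ by $\gamma$ at the cost of a controlled error, using the identity above, and rearranging yields
\[
\|\tilde s^{\mathcal P}-s\|_{\bm\Psi}^2\le\|s^\star_m-s\|_{\bm\Psi}^2+2\nu(\tilde s^{\mathcal P}-s^\star_m)+\pen(m)-\pen(\hat m^{\mathcal P})+\mathcal R,
\]
where $\mathcal R$ collects the $(\hat\gamma-\gamma)$ contributions. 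By the Cauchy–Schwarz bound above and Young's inequality, $\mathcal R$ is bounded by $\theta(\|\tilde s^{\mathcal P}-s\|_{\bm\Psi}^2+\|s^\star_m-s\|_{\bm\Psi}^2)+C(\theta)\|\check s^\star_{m_\bullet}-\hat s^\star_{m_\bullet}\|_{\bm\Psi}^2$, whose expectation is controlled through Assumption \textbf{(Rem)}.

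The main work is to control $2\nu(\tilde s^{\mathcal P}-s^\star_m)$. Split $\tilde s^{\mathcal P}-s^\star_m=u$ with $u\in S^\star_m+S^\star_{\hat m^{\mathcal P}}\subset S^\star_{m_\bullet}$; Young's inequality gives $2\nu(u)\le\theta\|u\|_{\bm\Psi}^2+\theta^{-1}\sup_{t\in B^\star_{m,\hat m^{\mathcal P}}}\nu(t)^2$, where $B^\star_{m,m'}$ is the unit ball of $S^\star_m+S^\star_{m'}$ for $\|\cdot\|_{\bm\Psi}$. To control this supremum uniformly in $m'$, I would apply Assumption \textbf{(Conc)} to each pair $(m,m')$ at level $x_{m'}=x+\xi D_{m'}$, the weight $\xi$ being chosen strictly above the combinatorial constant $\kappa_3$ of Proposition~\ref{prop:combinatorial}, so that $\sum_{m'\in\mathcal M^{\mathcal P}_{\ell_1}}e^{-\xi D_{m'}}$ and then $\sum_{\ell_1}$ converge. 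This requires bounds on the two ingredients of \textbf{(Conc)}: (i) $\E\mathcal Z(B^\star_{m,m'})\le C\sqrt{\|s\|_\infty(D_m+D_{m'})/\bar n}$, obtained by computing $\E[\mathcal Z^2]\le\|s\|_\infty(D_m+D_{m'})/\bar n$ via Cauchy–Schwarz on the dual basis and the Riesz property; and (ii) $B(\mathcal T)$: since $t^\sharp\in S_{m_\bullet}$ is a linear combination of primal wavelets, $(W.vi)$ and $(S.viii)$ combined with the coefficient bound $|c_{\bm\lambda}|\le 1$ (on the unit ball) yield $\|t^\sharp\|_\infty\le \kappa\,2^{L_\bullet/2}$ up to log factors. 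The resulting deviation term, after $2ab\le\theta a^2+b^2/\theta$ is used to separate the $\sqrt{\|s\|_\infty x/\bar n}$ part from the $\ell^\infty$ part, is absorbed into $\theta\|u\|_{\bm\Psi}^2$ plus the penalty, provided $c_1,c_2$ in the penalty are taken larger than explicit constants depending on $\kappa'_1,\kappa'_2,\kappa'_3,\kappa'_5$ and $\xi$. Here the choice of $L_\bullet$ ensures that the $\ell^\infty$-type deviation is dominated by $D_m/\bar n$ on the relevant scale.

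Finally, I would replace the random penalty with its deterministic surrogate using Assumption \textbf{(Var)}: on $\Omega_\sigma$ and in expectation, $\sum_{\bm\lambda\in m}(c_1\hat\sigma^2_{\bm\lambda}+c_2\bar R)/\bar n$ is comparable to $\kappa'_4c_1\max(\|s\|_\infty,1)D_m/\bar n+c_2\bar R D_m/\bar n$ and also dominates $\kappa'_5\sum_{\bm\lambda}\max(\hat\sigma^2_{\bm\lambda},1)/\bar n$, which by \textbf{(Var)} controls the variance term $\sum_{\bm\lambda\in m}\Var(\check\beta_{\bm\lambda})$ needed in the supremum bound. The event $\Omega_\sigma^c$ contributes at most $p_\sigma\E[\|\tilde s^{\mathcal P}-s\|^4_{\bm\Psi}]^{1/2}$, which, via a crude deterministic bound $\|\tilde s^{\mathcal P}\|_{\bm\Psi}\le\|s\|_{\bm\Psi}+\|\check s^\star_{m_\bullet}-\hat s^\star_{m_\bullet}\|_{\bm\Psi}+\|\check s^\star_{m_\bullet}\|_{\bm\Psi}$ and \textbf{(Rem)}, is absorbed in the $w(\bar n)/\bar n$ residual. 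Taking expectations, choosing $\theta$ small, and infimising over $m\in\mathcal M^{\mathcal P}$ delivers the stated oracle inequality.

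The main obstacle is the combination of two uniform controls: the chaining-type bound on $\E\mathcal Z$ for arbitrary pairs of pyramidal models and the calibration of $B(\mathcal T)$ against the combinatorial weight $\xi D_{m'}$, which is exactly why Proposition~\ref{prop:combinatorial} and the restriction $2^{L_\bullet}\le\bar n/(\log\bar n/d)^{2d}$ are indispensable. Once these match, the remainder of the proof is bookkeeping.
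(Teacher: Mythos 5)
Your overall architecture (basic inequality from the definition of $\hat m^{\mathcal P}$, union bound over models with weights proportional to dimension, use of Assumption \textbf{(Var)} to connect the data-driven penalty to the variance terms, Assumption \textbf{(Rem)} plus Cauchy--Schwarz and fourth moments for the residual and the bad event) is the same as the paper's. The genuine gap is in the central step, the control of $\sup_t\nu(t)$ over the unit ball of $S^\star_m+S^\star_{m'}$. Assumption \textbf{(Conc)} yields three terms, and the dangerous one is $\kappa'_3B(\mathcal T)x/\bar n$. You take $B(\mathcal T)\lesssim 2^{L_\bullet/2}$ up to logarithmic factors (which is indeed what Lemma~\ref{lemm:localization} gives for the whole unit ball), and you must take $x=x_{m'}\gtrsim\kappa_3 D_{m'}$ for the union bound to converge, since $\log\sharp\mathcal M^{\mathcal P}_{\ell_1}\asymp D(\ell_1)$. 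After squaring, this produces a term of order $L_\bullet^{c(d)}2^{L_\bullet}D_{m'}^2/\bar n^2$, which has to be absorbed by $\bar R D_{m'}/\bar n$; that requires roughly $2^{L_\bullet}D_{m'}\lesssim\bar n$, i.e. $2^{2L_\bullet}\lesssim\bar n$ up to logarithms, whereas the theorem only assumes $2^{L_\bullet}\le\bar n\left((\log\bar n)/d\right)^{-2d}$, nearly of order $\bar n$ (and this large choice is exactly the one used in the density, Poisson and continuous-time L\'evy corollaries). Concretely, for $d=2$, $2^{L_\bullet}\asymp\bar n(\log\bar n)^{-4}$ and $D_{m'}\asymp L_\bullet2^{L_\bullet}$, the offending term is of order $\bar n(\log\bar n)^{-8}$ while the penalty term is of order $(\log\bar n)^{-3}$. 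So your assertion that ``the choice of $L_\bullet$ ensures that the $\ell^\infty$-type deviation is dominated by $D_m/\bar n$'' fails at the stated generality, and the absorption step collapses.

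The paper gets around precisely this point with the truncation device of Lemma~\ref{lemm:chitrunc}: it introduces the event $\Omega_T$ on which every coordinate satisfies $|\nu(\Psi^\star_{\bm\lambda})|\le\epsilon$ with $\epsilon\asymp\|s\|_\infty/\underline D(L_\bullet)$, and restricts the supremum to the subset $\mathcal T'_m$ of the unit sphere whose coefficients are bounded by $\epsilon/z$ with $z=\sqrt{\kappa'_2\|s\|_\infty x/\bar n}$; this restriction is harmless because on $\Omega_T\cap\{\chi(m)\ge z\}$ the maximizer $t^\star_m$ of Lemma~\ref{lemm:chi2exp} automatically has such small coefficients, and on the complement $\chi^2(m)<\kappa'_2\|s\|_\infty x/\bar n$ anyway. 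Lemma~\ref{lemm:localization} then gives $B(\mathcal T'_m)\lesssim\sqrt{\|s\|_\infty}\big/\sqrt{x/\bar n}$, so the linear-in-$x$ term collapses into another $\sqrt{\|s\|_\infty x/\bar n}$ term, absorbable for every admissible $L_\bullet$; the condition on $L_\bullet$ is used elsewhere, namely to show $\P(\Omega_T^c)$ is of order $\bar n^{-2}(\log\bar n/d)^{-(d+1)}$ (by applying \textbf{(Conc)} to single wavelets $\pm\Psi^\star_{\bm\lambda}$) and to bound $\E[\chi^4(m_\bullet)]$ on the bad event. Your proposal contains neither this truncation nor any substitute for it, so it does not prove the theorem as stated. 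A secondary point: your bound $\E[\mathcal Z^2]\le\|s\|_\infty(D_m+D_{m'})/\bar n$ is not a consequence of the abstract assumptions; the paper instead uses $\E[\chi^2(m)]=\sum_{\bm\lambda\in m}\Var(\check\beta_{\bm\lambda})$ together with Assumption \textbf{(Var)} on $\Omega_\sigma$, which is also what makes the $c_1\hat\sigma^2_{\bm\lambda}$ part of the penalty appear.
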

\noindent
In practice, the penalty constants $c_1$ and $c_2$ are calibrated by simulation study. We may also replace $\bar R$ in the penalty by $\max\{\|\hat s^\star_{m_\bullet}\|_\infty,1\},$ and extend Theorem~\ref{theo:choosepen} to a random $\bar R$ by using arguments similar to~\cite{AkakpoLacour}.

\subsection{Back to the examples}\label{sec:backtoex} First, two general remarks are in order. For $t\in S^\star_{m_\bullet},$ let $f_t= \sum_{\lambda \in m_\bullet} \langle t, \Psi_{\bm\lambda} \rangle \Psi_{\bm \lambda},$ then $\|f_t\|= \|t\|_{\bm \Psi}$ and by~\eqref{eq:Mgen}, for all countable subfamily $\mathcal T$ of $S^\star_{m_\bullet},$
$$\mathcal Z(\mathcal T)=\sup_{t\in\mathcal T} \left(\int_Q f_t \d M   -\E\left[\int_Q f_t \d M \right]\right).$$
So Assumption \textbf{(Conc)} usually proceeds from a Talagrand type concentration inequality. Besides, we have seen in Section~\ref{sec:QRonemodel} that in general
$$\max_{\bm\lambda \in m_{\bullet}} \Var(\check\beta_{\bm\lambda})\leq \frac{\|s\|_\infty}{\bar n}.$$ Thus, whenever some upper-bound $R_\infty$ for $\|s\|_\infty$ is known, Assumption \textbf{(Var)} is satisfied with $\hat \sigma^2_{\bm \lambda} = R_\infty$ for all $\bm \lambda\in m_\bullet,$ $\Omega_\sigma=\Omega,$ $w(\bar n)=0,\kappa'_4=\kappa'_5=1.$ One may also estimate each variance term: this is what we propose in the following results, proved in Section~\ref{sec:proofcorollaries}.

\begin{corol}\label{corol:density} In the density estimation framework (see~\ref{sec:density}), let $\bar R\geq \max(\|s\|_\infty,1),$ $2^{L_\bullet}=n{\left((\log n)/d\right)^{-2d}},$
$$\hat \sigma^2_{\bm \lambda}=\frac{1}{n(n-1)}\sum_{i=2}^n\sum_{j=1}^{i-1}\left(\Psi_{\bm\lambda}(Y_i)-\Psi_{\bm\lambda}(Y_j)\right)^2 \text{ for all } \bm\lambda \in m_{\bullet},$$
and 
$$\pen(m)= \sum_{\lambda\in m}\frac{c_1\hat \sigma^2_{\bm\lambda}+c_2\bar R}{n}, \text{ for all }m\in\mathcal M^\mathcal P.$$
If $c_1,c_2$ are positive and large enough, then
$$\E\left[\|s-\tilde s^{\mathcal P}\|^2_{\bm \Psi} \right] \leq C_1 \min_{m\in\mathcal M^{\mathcal P}}\left (\|s-s^\star_m\|^2_{\bm \Psi}+  \frac{\bar R D_m}{n}\right) +C_2 \frac{\max\left\{\|s\|^2_{\bm \Psi},\bar R\right\}}{n}$$
where $C_1$ may depend on $\kappa, d,c_1,c_2$ and $C_2$ may depend $\kappa,j_0,d.$
\end{corol}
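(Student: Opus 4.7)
The plan is to deduce Corollary~\ref{corol:density} directly from Theorem~\ref{theo:choosepen} applied with $\bar n = n$. The choice of $L_\bullet$, the penalty and the variance estimators all match verbatim the inputs of the theorem, so the only work is to verify assumptions \textbf{(Conc)}, \textbf{(Var)} and \textbf{(Rem)} in the i.i.d.\ density setting.

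\textbf{(Rem)} is free. Since $\widehat M = M$ in this framework, $\widehat\beta_{\bm\lambda} = \check\beta_{\bm\lambda}$ for every $\bm\lambda$, so $\hat s^\star_m = \check s^\star_m$ pointwise and both displayed inequalities hold with $\kappa'_6 = 0$ and any nonnegative function $w$. For \textbf{(Conc)}, I would rewrite the process using $f_t = \sum_{\bm\lambda \in m_\bullet} \langle t,\Psi_{\bm\lambda}\rangle \Psi_{\bm\lambda}$, which satisfies $\|f_t\|^2 = \|t\|_{\bm\Psi}^2$; then
$$\mathcal Z(\mathcal T) = \sup_{t\in\mathcal T} \frac{1}{n}\sum_{i=1}^n \bigl(f_t(Y_i) - \E[f_t(Y_1)]\bigr),$$
with $\|f_t\|_\infty \leq B(\mathcal T)$ and $\Var(f_t(Y_1)) \leq \|s\|_\infty \|f_t\|^2 \leq \|s\|_\infty$ on $\mathcal T$. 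A direct application of the Bousquet/Talagrand concentration inequality for bounded empirical processes, followed by the standard trick $2\sqrt{ab} \leq \varepsilon a + b/\varepsilon$ applied to the term $\sqrt{B(\mathcal T)x \E[\mathcal Z]/n}$, yields exactly the form required in \textbf{(Conc)} with $\bar n = n$ and absolute constants $\kappa'_1,\kappa'_2,\kappa'_3$ (depending on $\varepsilon$, which can be fixed once and for all).

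The real work is \textbf{(Var)}. A short algebraic identity shows that $\hat\sigma^2_{\bm\lambda}$ is the usual unbiased sample variance of $(\Psi_{\bm\lambda}(Y_i))_{1\leq i\leq n}$, so
$$\E[\hat\sigma^2_{\bm\lambda}] = \Var(\Psi_{\bm\lambda}(Y_1)) \leq \int_Q \Psi_{\bm\lambda}^2 s \leq \|s\|_\infty \leq \max(\|s\|_\infty,1),$$
which gives the first part of \textbf{(Var)} with $\kappa'_4 = 1$. Set $\sigma^2_{\bm\lambda} = n\Var(\check\beta_{\bm\lambda}) = \Var(\Psi_{\bm\lambda}(Y_1))$ and
$$\Omega_\sigma = \bigl\{\sigma^2_{\bm\lambda} \leq 2\,\hat\sigma^2_{\bm\lambda} \text{ for every } \bm\lambda\in m_\bullet \text{ with } \sigma^2_{\bm\lambda} > 1\bigr\}.$$
On $\Omega_\sigma$ one has $\sigma^2_{\bm\lambda} \leq 2\max(\hat\sigma^2_{\bm\lambda},1)$, so the second part of \textbf{(Var)} holds with $\kappa'_5 = 2$. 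To control $p_\sigma = \P(\Omega_\sigma^c)$, I would apply a Bernstein inequality to the sample variance (or, equivalently, Bernstein separately to $\frac1n\sum \Psi_{\bm\lambda}^2(Y_i)$ and $\frac1n\sum \Psi_{\bm\lambda}(Y_i)$), using $\|\Psi_{\bm\lambda}\|_\infty^2 \leq \kappa^2 2^{|\bm j|} \leq \kappa^2 2^{L_\bullet}$ as the bound on the summands. With deviation $\sigma^2_{\bm\lambda}/2$ and $\sigma^2_{\bm\lambda} > 1$, Bernstein gives a per-coefficient bound of order $\exp(-c n / 2^{L_\bullet})$. Since $2^{L_\bullet} = n(\log n/d)^{-2d}$, this exponent is of order $(\log n)^{2d}$, and a union bound over $m_\bullet$ (whose cardinality is of order $L_\bullet^{d-1}2^{L_\bullet} \leq n$ by Proposition~\ref{prop:combinatorial}) still leaves $p_\sigma \leq e^{-c'(\log n)^{2d-1}}$. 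In particular one may take $w(n) = n p_\sigma$, which tends to $0$ faster than any polynomial rate, and the hypothesis $w(n)/n \to 0$ is abundantly satisfied.

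Once the three assumptions are in place, Theorem~\ref{theo:choosepen} applies with $\bar n = n$; using $\max(\|s\|_\infty,1) \leq \bar R$ to simplify the residual and absorbing the vanishing factor $(1+(\log n/d)^{-3(d+1)/2}+w(n))$ into a numerical constant, we obtain the claimed oracle inequality. The only substantive step is the uniform lower Bernstein deviation for the sample variance; everything else is bookkeeping matching the framework of Theorem~\ref{theo:choosepen}.
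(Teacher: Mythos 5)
Your proposal is correct and its skeleton is the one the paper uses: verify Assumptions \textbf{(Conc)}, \textbf{(Var)}, \textbf{(Rem)} and apply Theorem~\ref{theo:choosepen} with $\bar n=n$; \textbf{(Conc)} comes from Talagrand's inequality (the paper cites Massart's form, you cite Bousquet's plus the $2\sqrt{ab}\leq \varepsilon a+b/\varepsilon$ trick --- same thing), \textbf{(Rem)} is trivial because $\widehat M=M$, and $\kappa'_4=1$ follows from unbiasedness of the sample variance. The one step where you genuinely diverge is the construction of $\Omega_\sigma$: the paper simply invokes Lemma~1 of~\cite{ReynaudRT} with $\gamma=2$, which yields $w(n)=C(\kappa,j_0,d)/\log^{d+1}(n)$, whereas you prove the lower-tail concentration of $\hat\sigma^2_{\bm\lambda}$ directly by Bernstein, obtaining a super-polynomially small $p_\sigma$; both satisfy the hypotheses of the theorem, and your route has the merit of being self-contained (and a smaller $w$). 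Two details if you write it out. First, in the split of the sample variance, center at $\E[\Psi_{\bm\lambda}(Y_1)]$, i.e.\ compare $\frac1n\sum_i(\Psi_{\bm\lambda}(Y_i)-\E[\Psi_{\bm\lambda}(Y_1)])^2$ with its mean $\sigma^2_{\bm\lambda}$ and bound $(\bar\Psi_{\bm\lambda}-\E[\Psi_{\bm\lambda}(Y_1)])^2$ separately, rather than splitting into the raw second moment and $\bar\Psi_{\bm\lambda}^2$: in the raw split the term $(\E[\Psi_{\bm\lambda}(Y_1)])^2$ does not cancel against the targeted deviation $\sigma^2_{\bm\lambda}/2$, and the Bernstein variance proxy $\E[\Psi^4_{\bm\lambda}(Y_1)]\leq\|\Psi_{\bm\lambda}\|^2_\infty\|s\|_\infty$ makes the exponent, hence $\sup_n w(n)$ and ultimately $C_2$, depend on $\|s\|_\infty$, which the statement of the corollary does not allow; the centered split gives variance proxy $4\|\Psi_{\bm\lambda}\|^2_\infty\sigma^2_{\bm\lambda}$ and, on $\{\sigma^2_{\bm\lambda}>1\}$, an exponent of order $n2^{-L_\bullet}=(\log n/d)^{2d}$ free of $\|s\|_\infty$, so your conclusion stands as claimed. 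Second, the sup-norm bound for a hyperbolic wavelet is $\|\Psi_{\bm\lambda}\|_\infty\leq\kappa^{d}2^{|\bm j|/2}$, so $\kappa^{2d}$ rather than $\kappa^2$ should appear in your bound on $\|\Psi_{\bm\lambda}\|^2_\infty$ (cosmetic). You are also right that a variance-sensitive inequality is essential here: a plain Hoeffding bound for the U-statistic, with range of order $2^{L_\bullet}$, would give an exponent of order $n2^{-2L_\bullet}\to 0$ and fail.
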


\begin{corol}\label{corol:copula} In the  copula density estimation framework (see~\ref{sec:copula}), let $\bar R\geq \max(\|s\|_\infty,1)$ and $2^{L_\bullet}=\min\{n^{1/8}(\log n)^{-1/4},n{\left((\log n)/d\right)^{-2d}}\}.$ For all $\bm\lambda \in m_{\bullet},$ define
$$\hat \sigma^2_{\bm \lambda}=\frac{1}{n(n-1)}\sum_{i=2}^n\sum_{j=1}^{i-1}\left(\Psi_{\bm\lambda}\left(\hat F_{n1}(X_{i1}),\ldots,\hat F_{nd}(X_{id})\right)-\Psi_{\bm\lambda}\left(\hat F_{n1}(X_{j1}),\ldots,\hat F_{nd}(X_{jd})\right)\right)^2,$$
and for all $m\in\mathcal M^\mathcal P,$ let
$$\pen(m)= \sum_{\lambda\in m}\frac{c_1\hat \sigma^2_{\bm\lambda}+c_2\bar R}{n}.$$
Under Assumption (L), and if $c_1,c_2$ are positive and large enough, then
$$\E\left[\|s-\tilde s^{\mathcal P}\|^2_{\bm \Psi} \right] \leq C_1 \min_{m\in\mathcal M^{\mathcal P}}\left (\|s-s^\star_m\|^2_{\bm \Psi}+  \frac{\bar R D_m}{n}\right) +C_2 \max\left\{\|s\|^2_{\bm \Psi},\bar R\right\}\frac{(\log n)^{d-1}}{\sqrt{n}}$$
where $C_1$ may depend on $\kappa, d,c_1,c_2$ and $C_2$ may depend $\kappa,j_0,d.$
\end{corol}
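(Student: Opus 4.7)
The corollary follows by applying Theorem~\ref{theo:choosepen} with $\bar n=n$, so the plan is to verify Assumptions \textbf{(Conc)}, \textbf{(Var)} and \textbf{(Rem)} in the copula framework with a suitable function $w$, and to check that the prescribed bound $2^{L_\bullet}\leq n^{1/8}(\log n)^{-1/4}$ is compatible with the constraint $2^{L_\bullet}\leq n(\log n/d)^{-2d}$ of that theorem. The tighter upper bound on $L_\bullet$ used here is precisely what makes all the plug-in errors coming from the replacement of $F_j$ by $\hat F_{nj}$ negligible.

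\textbf{Concentration.} Setting $U_i=(F_1(X_{i1}),\ldots,F_d(X_{id}))$, the $U_i$ are iid on $[0,1]^d$ with density $s$, and $\int f_t\,\mathrm{d}M=n^{-1}\sum_{i=1}^n f_t(U_i)$ for every $t\in S^\star_{m_\bullet}$. Since $\Var(f_t(U_1))\leq\|s\|_\infty\|f_t\|^2=\|s\|_\infty$ and $\|f_t\|_\infty\leq B(\mathcal T)$ by hypothesis, a standard Bousquet-Klein-Rio-type inequality for suprema of iid empirical processes yields \textbf{(Conc)} with explicit numerical constants $\kappa'_1,\kappa'_2,\kappa'_3$.

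\textbf{Variance.} Introduce the idealized U-statistic $\tilde\sigma^2_{\bm\lambda}=(n(n-1))^{-1}\sum_{i\neq j}(\Psi_{\bm\lambda}(U_i)-\Psi_{\bm\lambda}(U_j))^2$, whose expectation is exactly $n\Var(\check\beta_{\bm\lambda})\leq\|s\|_\infty$. Writing $\hat U_i=(\hat F_{n1}(X_{i1}),\ldots,\hat F_{nd}(X_{id}))$ and applying Assumption (L) coordinatewise together with the $\infty$-norm bound $\|\psi_{\lambda_k}\|_\infty\leq\kappa 2^{j_k/2}$ inside a tensor Leibniz expansion, one obtains $|\Psi_{\bm\lambda}(\hat U_i)-\Psi_{\bm\lambda}(U_i)|\leq C(\kappa,d)\,2^{3L_\bullet/2}\max_{j}\|\hat F_{nj}-F_j\|_\infty$, hence $|\hat\sigma^2_{\bm\lambda}-\tilde\sigma^2_{\bm\lambda}|\leq C(\kappa,d)\,2^{2L_\bullet}\max_j\|\hat F_{nj}-F_j\|_\infty$. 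Take $\Omega_\sigma$ to be the intersection of the DKW event $\{\max_j\|\hat F_{nj}-F_j\|_\infty\leq c\sqrt{\log n/n}\}$ and a Bernstein-type uniform deviation event ensuring $\tilde\sigma^2_{\bm\lambda}\geq\E[\tilde\sigma^2_{\bm\lambda}]/2$ for all $\bm\lambda\in m_\bullet$ simultaneously; its probability is $\geq 1-C/n$ by a union bound using the cardinality estimate of Proposition~\ref{prop:combinatorial}. The choice $2^{L_\bullet}\leq n^{1/8}(\log n)^{-1/4}$ makes $2^{2L_\bullet}\sqrt{\log n/n}=o(1)$, yielding both $\E[\hat\sigma^2_{\bm\lambda}]\leq\kappa'_4\max(\|s\|_\infty,1)$ unconditionally and $\Var(\check\beta_{\bm\lambda})\leq\kappa'_5\max(\hat\sigma^2_{\bm\lambda},1)/n$ on $\Omega_\sigma$.

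\textbf{Remainder and main obstacle.} The $\L^2$ part of \textbf{(Rem)} is exactly Proposition~\ref{prop:copulaone}; combined with $2^{4L_\bullet}\leq\sqrt n/\log n$ it gives $\E\|\check s^\star_m-\hat s^\star_m\|^2\leq C(\kappa,d)L_\bullet^{d-1}/\sqrt n$ for every $m\subset m_\bullet$, so $w(n)=C(\kappa,d)(\log n)^{d-1}$ handles the first requirement and propagates into the final $(\log n)^{d-1}/\sqrt n$ residual. For the fourth-moment part one re-runs the Lipschitz + DKW argument at the level of the fourth power, obtaining $\E\|\check s^\star_{m_\bullet}-\hat s^\star_{m_\bullet}\|^4\leq C(\kappa,d)L_\bullet^{2(d-1)}2^{8L_\bullet}\E[\max_j\|\hat F_{nj}-F_j\|_\infty^4]$ and using that every polynomial moment of the DKW statistic is $O((\log n/n)^2)$; combined with $p_\sigma\leq C/n$, both quantities inside the \textbf{max} of \textbf{(Rem)} end up bounded by $w(n)/n$. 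Theorem~\ref{theo:choosepen} then delivers the stated inequality. The main obstacle throughout is the interlocked Lipschitz/tensor/DKW bookkeeping: the plug-in errors must be controlled uniformly over $\bm\lambda\in m_\bullet$ with constants remaining tolerable as $L_\bullet\sim\log n$, and the apparently strange ceiling $2^{L_\bullet}\leq n^{1/8}(\log n)^{-1/4}$ is calibrated precisely so that all these errors absorb into the already present $(\log n)^{d-1}/\sqrt n$ term.
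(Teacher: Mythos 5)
Your proposal is correct and follows essentially the same route as the paper: Assumption \textbf{(Conc)} by reduction to the i.i.d.\ variables $(F_1(X_{i1}),\ldots,F_d(X_{id}))$, Assumption \textbf{(Var)} by comparing the plug-in estimators $\hat\sigma^2_{\bm\lambda}$ with their idealized U-statistic counterparts via the Lipschitz/DKW control used in Proposition~\ref{prop:copulaone}, and Assumption \textbf{(Rem)} from that proposition together with a fourth-moment analogue, before invoking Theorem~\ref{theo:choosepen}. The only slip is bookkeeping: since the remainder satisfies $\E\left[\|\check s^\star_m-\hat s^\star_m\|^2\right]\lesssim(\log n)^{d-1}/\sqrt{n}$, Assumption \textbf{(Rem)} forces $w(n)\asymp\sqrt{n}\,(\log n)^{d-1}$ (as in the paper), not $w(n)=C(\log n)^{d-1}$ as you wrote, which is in fact what your own stated final residual $(\log n)^{d-1}/\sqrt{n}$ implicitly uses.
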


\begin{corol}\label{corol:Poisson} In the Poisson intensity estimation framework (see~\ref{sec:Poisson}), let $\bar R\geq \max(\|s\|_\infty,1),$ $2^{L_\bullet}=\text{Vol}_d(Q){\left((\log \text{Vol}_d(Q))/d\right)^{-2d}},$
$$\hat \sigma^2_{\bm \lambda}=\frac{1}{\text{Vol}_d(Q)}\int_Q \Psi^2_{\bm\lambda}\d N, \text{ for all } \bm\lambda \in m_{\bullet},$$
and 
$$\pen(m)= \sum_{\lambda\in m}\frac{c_1\hat \sigma^2_{\bm\lambda}+c_2\bar R}{\text{Vol}_d(Q)}, \text{ for all }m\in\mathcal M^\mathcal P.$$
If $c_1,c_2$ are positive and large enough, then
$$\E\left[\|s-\tilde s^{\mathcal P}\|^2_{\bm \Psi} \right] \leq C_1 \min_{m\in\mathcal M^{\mathcal P}}\left (\|s-s^\star_m\|^2_{\bm \Psi}+  \frac{\bar R D_m}{\text{Vol}_d(Q)}\right) +C_2 \frac{\max\left\{\|s\|^2_{\bm \Psi},\bar R\right\}}{\text{Vol}_d(Q)}$$
where $C_1$ may depend on $\kappa, d,c_1,c_2$ and $C_2$ may depend $\kappa,j_0,d.$
\end{corol}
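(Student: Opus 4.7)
The plan is to derive Corollary~\ref{corol:Poisson} as a direct application of Theorem~\ref{theo:choosepen}, with $\bar n = \text{Vol}_d(Q)$. This reduces to verifying the three standing Assumptions \textbf{(Conc)}, \textbf{(Var)}, \textbf{(Rem)} in the Poisson framework, with the specific variance estimator and maximal level $L_\bullet$ prescribed in the statement.

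I would first dispose of \textbf{(Rem)}, which is immediate: since $\widehat M = M$ in this example, $\hat s^\star_m = \check s^\star_m$ identically, so the left-hand side of \textbf{(Rem)} vanishes, $\kappa'_6 = 0$, and any admissible $w$ is allowed; for compatibility with \textbf{(Var)}, I would simply take $w(\bar n) = 1$. Next I would tackle \textbf{(Conc)}. Setting $f_t = \sum_{\bm\lambda\in m_\bullet}\langle t,\Psi_{\bm\lambda}\rangle\Psi_{\bm\lambda}$, one has $\|f_t\| = \|t\|_{\bm\Psi} = 1$ for $t\in\mathcal T$, and from~\eqref{eq:Mgen}
\[\mathcal Z(\mathcal T) = \frac{1}{\text{Vol}_d(Q)}\sup_{t\in\mathcal T}\left(\int_Q f_t\, dN - \E\left[\int_Q f_t\, dN\right]\right).\]
Since $N$ is a Poisson point process with mean measure of density $\text{Vol}_d(Q)\cdot s$, this supremum is a centered Poisson functional, and the required deviation bound is exactly of the type delivered by a Talagrand-type concentration inequality for Poisson suprema. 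The Bennett variance factor is bounded by $\|s\|_\infty\text{Vol}_d(Q)\sup_{t\in\mathcal T}\|f_t\|^2 \leq \|s\|_\infty\text{Vol}_d(Q)$, the $\L^\infty$ factor by $B(\mathcal T)$, and after dividing by $\text{Vol}_d(Q) = \bar n$ one recovers the form prescribed by \textbf{(Conc)}.

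For \textbf{(Var)}, Campbell's formula gives $\E[\hat\sigma^2_{\bm\lambda}] = \int_Q\Psi^2_{\bm\lambda}s \leq \|s\|_\infty$, so $\kappa'_4 = 1$, and moreover $\Var(\check\beta_{\bm\lambda}) = \E[\hat\sigma^2_{\bm\lambda}]/\bar n$ exactly. I would take $\Omega_\sigma$ to be the event on which $\hat\sigma^2_{\bm\lambda} \geq \E[\hat\sigma^2_{\bm\lambda}]/2$ for every $\bm\lambda\in m_\bullet$ with $\E[\hat\sigma^2_{\bm\lambda}]\geq 1$, since coordinates with $\E[\hat\sigma^2_{\bm\lambda}]<1$ automatically satisfy $\Var(\check\beta_{\bm\lambda})\leq 1/\bar n\leq \max(\hat\sigma^2_{\bm\lambda},1)/\bar n$; on $\Omega_\sigma$ one then has $\kappa'_5 = 2$. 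Since $\hat\sigma^2_{\bm\lambda}$ is $(\text{Vol}_d(Q))^{-1}$ times a Poisson integral of $\Psi^2_{\bm\lambda}$, a Bernstein-type deviation bound per coordinate, with variance of order $\|s\|_\infty \int_Q\Psi^4_{\bm\lambda}$ and sup-norm $\|\Psi^2_{\bm\lambda}\|_\infty \leq \kappa^{2d}2^{L_\bullet}$, combined with a union bound over the $O(L_\bullet^{d-1}2^{L_\bullet})$ coefficients of $m_\bullet$, drives $p_\sigma$ down to a polynomial decay in $\bar n$, thanks to the prescribed $2^{L_\bullet}\leq \bar n/((\log\bar n)/d)^{2d}$; any $w(\bar n) \geq 1$ with $w(\bar n)/\bar n \to 0$ is then admissible.

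The main delicate point will be the calibration of this Bernstein bound so that the union bound over $m_\bullet$ survives the $\L^\infty$ blow-up $\|\Psi^2_{\bm\lambda}\|_\infty$ of the hyperbolic wavelets at the deepest levels: the cut-off $2^{L_\bullet}\leq \bar n/((\log\bar n)/d)^{2d}$ is precisely what converts that sup-norm into a polylogarithmic overhead, so that the log-cardinality $\log|m_\bullet|$ required by the union bound is comfortably absorbed. Once the three assumptions are in place, Theorem~\ref{theo:choosepen} applies with $\bar R \geq \max(\|s\|_\infty,1)$ and delivers the announced oracle inequality; the second term on the right-hand side of the theorem collapses to the form stated in the corollary because $w(\bar n) = 1$ and $\left(\log(\bar n)/d\right)^{-3(d+1)/2}$ is bounded.
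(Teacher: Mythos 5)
Your proposal is correct and follows essentially the same route as the paper: verify Assumptions \textbf{(Conc)}, \textbf{(Var)} and \textbf{(Rem)} with $\bar n=\text{Vol}_d(Q)$ and invoke Theorem~\ref{theo:choosepen}, with \textbf{(Conc)} coming from Talagrand's inequality for Poisson process suprema, $\hat\sigma^2_{\bm\lambda}$ unbiased for $\text{Vol}_d(Q)\Var(\check\beta_{\bm\lambda})$ by Campbell's formula, and \textbf{(Rem)} trivial since $\widehat M=M$. The only cosmetic difference is that you construct the event $\Omega_\sigma$ by hand via a per-coefficient Bernstein bound and a union bound over $m_\bullet$ (which the cut-off on $2^{L_\bullet}$ indeed makes work), whereas the paper simply cites Lemma 6.1 of Reynaud-Bouret and Rivoirard for its existence.
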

\noindent 

\begin{corol}\label{corol:levydensitycont} In the L\'evy jump intensity estimation framework with continuous time observations (see~\ref{sec:levydensitycont}), let $\bar R\geq \max(\|s\|_\infty,1),$ $2^{L_\bullet}=T{\left((\log T)/d\right)^{-2d}},$
$$\hat \sigma^2_{\bm \lambda}=\frac{1}{T} \iint\limits_{[0,T]\times Q} \Psi^2_{\bm\lambda}(x) N(\d t, \d x), \text{ for all } \bm\lambda \in m_{\bullet},$$
and 
$$\pen(m)= \sum_{\lambda\in m}\frac{c_1\hat \sigma^2_{\bm\lambda}+c_2\bar R}{T}, \text{ for all }m\in\mathcal M^\mathcal P.$$
If $c_1,c_2$ are positive and large enough, then
$$\E\left[\|s-\tilde s^{\mathcal P}\|^2_{\bm \Psi} \right] \leq C_1 \min_{m\in\mathcal M^{\mathcal P}}\left (\|s-s^\star_m\|^2_{\bm \Psi}+  \frac{\bar R D_m}{T}\right) +C_2 \frac{\max\left\{\|s\|^2_{\bm \Psi},\bar R\right\}}{T}$$
where $C_1$ may depend on $\kappa, d,c_1,c_2$ and $C_2$ may depend $\kappa,j_0,d.$
\end{corol}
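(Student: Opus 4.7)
The strategy is to verify Assumptions \textbf{(Conc)}, \textbf{(Var)} and \textbf{(Rem)} of Theorem~\ref{theo:choosepen} with $\bar n = T$ and then invoke that theorem directly. Since in this framework $\widehat M = M$, we have $\check s^\star_m = \hat s^\star_m$ identically, so Assumption \textbf{(Rem)} is satisfied trivially with $\kappa'_6 = 0$ and $w(T)\equiv 0$.

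For Assumption \textbf{(Conc)}, the remark in Section~\ref{sec:backtoex} rewrites $\mathcal Z(\mathcal T) = \sup_{t\in\mathcal T}\bigl(\int_Q f_t\,\d M - \E[\int_Q f_t\,\d M]\bigr)$; because $M = N/T$ with $N$ a Poisson process of mean measure $s(x)\,\d t\,\d x$ on $[0,T]\times Q$, this is a supremum of centered Poisson integrals. For $t\in\mathcal T$ with $\|t\|_{\bm\Psi}=1$ we have $\|f_t\|=1$, so the weak variance of $\int_Q f_t\,\d M$ is at most $\frac{1}{T}\int_Q f_t^2 s \leq \|s\|_\infty/T$, while each summand is bounded by $\|f_t\|_\infty/T \leq B(\mathcal T)/T$. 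I would then invoke a Talagrand-type concentration inequality for suprema of centered Poisson integrals (of the Reynaud--Bouret type), which produces exactly the three-term decomposition required by \textbf{(Conc)} with the announced scaling in $\|s\|_\infty$ and $B(\mathcal T)$.

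For Assumption \textbf{(Var)}, Campbell's formula gives $\E[\hat\sigma^2_{\bm\lambda}] = \int_Q \Psi^2_{\bm\lambda} s \leq \|s\|_\infty$, yielding the first bound with $\kappa'_4 = 1$. Since $\Var(\check\beta_{\bm\lambda}) = T^{-1}\E[\hat\sigma^2_{\bm\lambda}]$, it suffices to define $\Omega_\sigma$ as the event on which $\E[\hat\sigma^2_{\bm\lambda}] \leq 2\max(\hat\sigma^2_{\bm\lambda},1)$ for every $\bm\lambda\in m_\bullet$, so that $\kappa'_5 = 2$. To bound $p_\sigma = \P(\Omega_\sigma^c)$, I would apply a Bernstein inequality for centered Poisson integrals to each $\hat\sigma^2_{\bm\lambda}$: using $\|\Psi^2_{\bm\lambda}\|_\infty \leq \kappa^2 2^{L_\bullet}$, the prescribed choice $2^{L_\bullet}\leq T((\log T)/d)^{-2d}$ makes the deviations at level $x = c\log T$ negligible compared with $\E[\hat\sigma^2_{\bm\lambda}]$ (or with $1$ when the latter is smaller), and a union bound over the polynomially-many $\bm\lambda\in m_\bullet$ yields $p_\sigma = O(T^{-k})$ for any desired $k$, so any constant $w(T)\equiv w_0 \geq 1$ is admissible.

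With all three assumptions verified, Theorem~\ref{theo:choosepen} applies with $\bar n = T$; since $w$ and $((\log T)/d)^{-3(d+1)/2}$ are both bounded, its residual term collapses to $C_2 \max\{\|s\|^2_{\bm\Psi},\bar R\}/T$, which is exactly the announced form. The main delicate point is obtaining precisely the $\sqrt{\|s\|_\infty x/T}$ scaling in the Poisson concentration step rather than a cruder $\sqrt{\bar R x/T}$, which requires invoking a tailored Poisson supremum inequality rather than a generic bounded empirical-process bound; once that is in hand, the rest of the argument is bookkeeping.
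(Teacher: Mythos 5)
Your proposal is correct and follows essentially the same route as the paper: the paper proves this corollary exactly as the Poisson case with $\bar n=T$, verifying \textbf{(Conc)} via a Talagrand-type inequality for Poisson processes, \textbf{(Var)} via unbiasedness of $\hat\sigma^2_{\bm\lambda}$ (Campbell) together with a Bernstein-type deviation/union-bound argument for the event $\Omega_\sigma$ (cited there as Lemma 6.1 of Reynaud-Bouret--Rivoirard, which you reproduce by hand), and \textbf{(Rem)} trivially since $\widehat M=M$, before applying Theorem~\ref{theo:choosepen}. The only cosmetic slips are the sup-norm bound $\|\Psi^2_{\bm\lambda}\|_\infty\leq\kappa^{2d}2^{L_\bullet}$ (not $\kappa^2 2^{L_\bullet}$) and the need to use one common function $w$ in \textbf{(Var)} and \textbf{(Rem)}, which your constant choice already accommodates.
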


\begin{corol}\label{corol:levydensitydisc} In the L\'evy jump intensity estimation framework with discrete time observations (see~\ref{sec:levydensitydisc}), let $\bar R\geq \max(\|s\|_\infty,1),$ $2^{L_\bullet}=\min\left\{(n\Delta)^{1/4},n\Delta{\left(\log (n\Delta)/d\right)^{-2d}}\right\},$
$$\hat \sigma^2_{\bm \lambda}=\frac{1}{n\Delta} \sum_{i=1}^n \Psi^2_{\bm\lambda}\left(X_{i\Delta}- X_{(i-1)\Delta}\right), \text{ for all } \bm\lambda \in m_{\bullet},$$
and 
$$\pen(m)= \sum_{\lambda\in m}\frac{c_1\hat \sigma^2_{\bm\lambda}+c_2\bar R}{n\Delta}, \text{ for all }m\in\mathcal M^\mathcal P.$$
If  Assumption (L) is satisfied, if $n\Delta^2$ stays bounded while $n\Delta\rightarrow\infty$ as  $n\rightarrow \infty,$  and if $c_1,c_2$ are positive and large enough, then
$$\E\left[\|s-\tilde s^{\mathcal P}\|^2_{\bm \Psi} \right] \leq C_1 \min_{m\in\mathcal M^{\mathcal P}}\left (\|s-s^\star_m\|^2_{\bm \Psi}+  \frac{\bar R D_m}{T}\right) +C_2 \max\left\{\|s\|^2_{\bm \Psi},\bar R\right\}\frac{\log^{d-1}(n\Delta)}{n\Delta}$$
where $C_1$ may depend on $\kappa, d,c_1,c_2$ and $C_2$ may depend $\kappa,j_0,d,Q,f.$
\end{corol}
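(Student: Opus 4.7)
The plan is to verify Assumptions \textbf{(Conc)}, \textbf{(Var)}, \textbf{(Rem)} of Theorem~\ref{theo:choosepen} in the discrete-time L\'evy framework with $\bar n = n\Delta$, and then apply the theorem. Throughout I will exploit the two regime constraints $2^{L_\bullet} \leq (n\Delta)^{1/4}$ and $n\Delta^2 = O(1)$ to control the discretization error. Note that $2^{L_\bullet} \leq (n\Delta)^{1/4}$ implies $2^{4L_\bullet} n\Delta^3 \leq (n\Delta^2)^2 = O(1)$ and $2^{3L_\bullet}\Delta \leq n^{3/4}\Delta^{7/4} = o(1)$, which is the basic mechanism that will make Proposition~\ref{prop:disclevyone} deliver exactly the $w(\bar n)/\bar n$ tail required by \textbf{(Rem)}.

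For \textbf{(Conc)}, I use that $M$ is $1/(n\Delta)$ times the restriction to $Q$ of the Poisson random measure $N$ on $[0,n\Delta] \times Q$ with intensity $dt \otimes s(x)dx$. For $t \in S^\star_{m_\bullet}$ with $\|t\|_{\bm\Psi} = 1$, writing $f_t = \sum_{\bm\lambda \in m_\bullet}\langle t,\Psi_{\bm\lambda}\rangle\Psi_{\bm\lambda}$ as in Section~\ref{sec:backtoex} gives $\|f_t\| = \|t\|_{\bm\Psi} = 1$, so $\mathcal Z(\mathcal T)$ is a centered supremum of a Poisson integral. I invoke the Talagrand-type concentration inequality for Poisson functionals (Reynaud-Bouret's inequality), which yields the required deviation with variance factor $\|s\|_\infty/\bar n$ coming from $\int f_t^2 s \leq \|s\|_\infty\|f_t\|^2$ and uniform bound $B(\mathcal T)$.

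For \textbf{(Var)}, I proceed in two substeps. First, to bound $\E[\hat\sigma^2_{\bm\lambda}]$, I rely on the small-time expansion of the transition law of $\mathbf X$: since $0 \notin Q$ and $Q$ is compact, the law of $Y_i = X_{i\Delta} - X_{(i-1)\Delta}$ restricted to $Q$ satisfies $\E[h(Y_i)\BBone_Q(Y_i)] = \Delta \int_Q h \, s + O(\Delta^2)\|h\|_\infty^2 \,C(f,Q)$ (the same small-time estimate underlying Proposition~\ref{prop:disclevyone}, going back to Figueroa-L\'opez). Applying this to $h = \Psi^2_{\bm\lambda}$, summing over $i$ and dividing by $n\Delta$, yields $\E[\hat\sigma^2_{\bm\lambda}] \leq \|s\|_\infty + C\Delta\|\Psi_{\bm\lambda}\|_\infty^2 \leq \|s\|_\infty + C\Delta 2^{L_\bullet}$, which is $O(\max(\|s\|_\infty,1))$ under our parameter conditions. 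Second, to build $\Omega_\sigma$, I apply Bernstein's inequality to the independent sum defining $\hat\sigma^2_{\bm\lambda}$ at a deviation level of order $C\log(\bar n)$, take a union bound over the $O(L_\bullet^{d-1}2^{L_\bullet})$ wavelets of $m_\bullet$, and check that the resulting $p_\sigma$ is polynomially small in $\bar n$, so $p_\sigma \leq w(\bar n)/\bar n$ with $w(\bar n) = C\log^{d-1}(n\Delta)$. On that event, $\hat\sigma^2_{\bm\lambda}$ is comparable to $\|s\|_\infty$, which dominates $\Var(\check\beta_{\bm\lambda})$ as already computed in Section~\ref{sec:QRonemodel}.

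For \textbf{(Rem)}, Proposition~\ref{prop:disclevyone} directly gives the first inequality with the announced $w$. The second inequality concerning $\E[\|\check s^\star_{m_\bullet}-\hat s^\star_{m_\bullet}\|^4_{\bm\Psi}]$ requires extending Proposition~\ref{prop:disclevyone} to the fourth moment, which I expect to proceed by squaring the pointwise small-time expansion $\E[(\Psi_{\bm\lambda}(Y_i)-\text{Poisson integral})^2]$ and following the same bookkeeping, combined with the crude deterministic control $\|\check s^\star_{m_\bullet}-\hat s^\star_{m_\bullet}\|_{\bm\Psi}^2 \leq C\sharp m_\bullet \cdot 2^{L_\bullet}/\bar n$ obtained from Cauchy-Schwarz and the uniform bound $\|\Psi_{\bm\lambda}\|_\infty \leq \kappa 2^{L_\bullet/2}$. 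Together with $p_\sigma = O(\bar n^{-p})$ for any $p$, both halves of the second line of \textbf{(Rem)} are fulfilled with the same $w(\bar n)$. Plugging these verifications into Theorem~\ref{theo:choosepen} then yields the stated oracle inequality, the residual term $\max\{\|s\|^2_{\bm\Psi},\bar R\}\log^{d-1}(n\Delta)/(n\Delta)$ arising from $w(\bar n)/\bar n$.

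The main obstacle I foresee is Assumption \textbf{(Var)}(a): the small-time expansion of $\E[\Psi^2_{\bm\lambda}(Y_i)]$ must be controlled uniformly over all hyperbolic wavelets at resolution up to $L_\bullet$, and the anisotropy makes the naive $\|\Psi_{\bm\lambda}\|_\infty$ bound somewhat wasteful. The fourth-moment extension of Proposition~\ref{prop:disclevyone} for \textbf{(Rem)} is an additional but essentially parallel computation.
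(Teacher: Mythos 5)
Your overall plan coincides with the paper's: take $\bar n=n\Delta$, verify \textbf{(Conc)} via the Talagrand/Reynaud-Bouret inequality for the underlying Poisson random measure exactly as in the Poisson and continuous-time corollaries, and use $2^{L_\bullet}\leq(n\Delta)^{1/4}$ together with boundedness of $n\Delta^2$ so that the residual terms of Proposition~\ref{prop:disclevyone} become $w(\bar n)/\bar n$ with $w(\bar n)=C\log^{d-1}(n\Delta)$. Your treatment of \textbf{(Var)} (Bernstein applied directly to the i.i.d.\ sum defining $\hat\sigma^2_{\bm\lambda}$, plus a small-time bias bound) is an acceptable variant of the paper's route, which instead writes $\hat\sigma^2_{\bm\lambda}-\check\sigma^2_{\bm\lambda}=R(\Psi^2_{\bm\lambda})$ with $R(g)=\hat I(g)-I(g)+D_\Delta(g)$, bounds $|D_\Delta(g)|\leq C\max\{\|g\|_1,\|g\|_\infty,\|g\|_L\}\Delta$, and uses Bernstein both for the i.i.d.\ part $\hat I$ and the Poisson part $I$. (Minor slip on your side: the bias is governed by $\max\{\|\Psi^2_{\bm\lambda}\|_1,\|\Psi^2_{\bm\lambda}\|_\infty,\|\Psi^2_{\bm\lambda}\|_L\}\lesssim 2^{2L_\bullet}$, not by $\|\Psi_{\bm\lambda}\|_\infty^2\lesssim 2^{L_\bullet}$; this is still negligible in your regime.)

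The genuine gap is in the second half of \textbf{(Rem)}. Your ``crude deterministic control'' $\|\check s^\star_{m_\bullet}-\hat s^\star_{m_\bullet}\|^2_{\bm\Psi}\leq C\,\sharp m_\bullet\,2^{L_\bullet}/\bar n$ is false: $\check\beta_{\bm\lambda}=\frac{1}{n\Delta}\iint_{[0,n\Delta]\times Q}\Psi_{\bm\lambda}\,\d N$ is a Poisson integral, so it admits no deterministic bound at all (the number of jumps falling in $Q$ is unbounded), and even the discrete part only satisfies $|\hat\beta_{\bm\lambda}|\leq\|\Psi_{\bm\lambda}\|_\infty/\Delta$, which gives $2^{L_\bullet}/\Delta^2$ per coefficient, far larger than $2^{L_\bullet}/(n\Delta)$. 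Consequently the fourth-moment bound on $\E\bigl[\|\check s^\star_{m_\bullet}-\hat s^\star_{m_\bullet}\|^4_{\bm\Psi}\bigr]$ does not follow from squaring the second-moment computation together with that bound. The paper obtains it from the same decomposition as for \textbf{(Var)}: $\check\beta_{\bm\lambda}-\hat\beta_{\bm\lambda}=-R(\Psi_{\bm\lambda})$ with $R(g)=\hat I(g)-I(g)+D_\Delta(g)$, where $D_\Delta(\Psi_{\bm\lambda})$ is deterministically of order $\max\{\|\Psi_{\bm\lambda}\|_1,\|\Psi_{\bm\lambda}\|_\infty,\|\Psi_{\bm\lambda}\|_L\}\Delta$, and both centered terms satisfy Bernstein inequalities ($\hat I$ via~\cite{Massart}, Proposition 2.9, and $I$ via~\cite{ReynaudPoisson}, Proposition 7); the resulting exponential tails give fourth-moment bounds for each $R(\Psi_{\bm\lambda})$, which are then summed over the $O(L_\bullet^{d-1}2^{L_\bullet})$ indices of $m_\bullet$ using $2^{L_\bullet}\leq(n\Delta)^{1/4}$, covering both halves of \textbf{(Rem)} (including the product with $p_\sigma$, where your claim that $p_\sigma=O(\bar n^{-p})$ for any $p$ needs the deviation level in the union bound to be taken large enough, not merely of order $\log\bar n$ with an unspecified constant). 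Once this step is repaired along these lines, the rest of your argument goes through.
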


\noindent 
Corollaries~\ref{corol:Poisson},~\ref{corol:levydensitycont},~\ref{corol:levydensitydisc} extend respectively the works of~\cite{ReynaudPoisson,FigueroaHoudre,UK} to a multivariate framework, with a complex family of models allowing for nonhomogeneous smoothness, and a more refined penalty.

\section{Adaptivity to mixed smoothness}\label{sec:adaptivity}

There remains to compare the performance of our procedure $\tilde s^{\mathcal P}$ to that of other estimators. For that purpose, we derive the estimation rate of $\tilde s^{\mathcal P}$  under smoothness assumptions that induce sparsity on the hyperbolic wavelet coefficients of $s.$ We then compare it to the minimax rate.  
	
	\subsection{Function spaces with dominating mixed smoothness} For $\alpha\in\N^\star$ and $1\leq p\leq \infty,$ the mixed Sobolev space with smoothness $\alpha$ measured in the $\L_p-$norm is defined as
$$SW^\alpha_{p,(d)}=\left\{f\in\L_p([0,1]^d) \Bigg| \|f\|_{SW^\alpha_{p,(d)}}:= \sum_{0\leq r_1,\ldots,r_d\leq \alpha} \left\|\frac{\partial^{r_1+\ldots+r_d}f}{\partial^{r_1}x_1\ldots\partial^{r_d}x_d}\right\|_p <\infty\right\},$$
while the classical Sobolev space is
$$W^\alpha_{p,(d)}=\left\{f\in\L_p([0,1]^d) \Bigg| \|f\|_{W^\alpha_{p,(d)}}:= \sum_{0\leq r_1+\ldots+r_d\leq \alpha} \left\|\frac{\partial^{r_1+\ldots+r_d}f}{\partial^{r_1}x_1\ldots\partial^{r_d}x_d}\right\|_p <\infty\right\}.$$
The former contains functions whose highest order derivative is the mixed derivative $\partial^{d\alpha}f /{\partial^\alpha x_1\ldots\partial^\alpha x_d},$ while the latter contains all derivatives up to global order $d\alpha.$ Both spaces coincide in dimension $d=1,$ and otherwise we have the obvious continuous embeddings 
\begin{equation}\label{eq:embedSobol}
W^{d\alpha}_{p,(d)}  \hookrightarrow SW^\alpha_{p,(d)}  \hookrightarrow  W^\alpha_{p,(d)}.
\end{equation}
H\"older and Besov spaces with mixed dominating smoothness may be defined thanks to mixed differences. For $f:[0,1]\rightarrow \R,x\in[0,1]$ and $h>0,$ 
$$\Delta^0_{h}(f,x) =f(x),\quad\Delta^1_{h}(f,x) =f(x+h)-f(x)$$
and more generally, for $r\in\N^\star,$ the $r$-th order univariate difference operator is 
$$\Delta^r_{h}=\Delta^1_{h}\circ \Delta^{r-1}_{h},$$ so that
\begin{equation}\label{eq:binomdiff}
\Delta^r_h(f,x)=\sum_{k=0}^r \binom{r}{k} (-1)^{r-k}f(x+kh).
\end{equation}
Then for $t>0$ the univariate modulus of continuity of order $r$ in $\L_p$ is defined as
$$w_r(f,t)_p=\sup_{0<h<t} \|\Delta^{r}_{h}(f,.)\|_p.$$
For $f:[0,1]^d\rightarrow \R,\mathbf x=(x_1,\ldots,x_d)\in[0,1]^d,r\in\N^\star$ and $h_\ell>0,$ we denote by $\Delta^r_{h_\ell,\ell}$ the univariate difference operator applied to the $\ell$-th coordinate while keeping the other ones fixed, so that
$$\Delta^r_{h_\ell,\ell}(f,\mathbf x)=\sum_{k=0}^r \binom{r}{k} (-1)^{r-k}f(x_1,\ldots,x_\ell+kh_\ell,\ldots,x_d).$$
For any subset $\mathbf e$ of $\{1,\ldots,d\}$ and $\mathbf h=(h_1,\ldots,h_d)\in(0,+\infty)^d,$ the $r$-th order mixed difference operator is given by
$$\Delta^{r,\mathbf e}_{\mathbf h}:= \prod_{\ell \in \mathbf e} \Delta^r_{h_\ell,\ell}.$$
For $\mathbf t=(t_1,\ldots,t_d)\in(0,+\infty)^d,$ we set $\mathbf {t_e}=(t_\ell)_{\ell \in \mathbf e},$ and define the mixed modulus of continuity 
$$w_r^{\mathbf e}(f,\mathbf {t_e})_p=\sup_{0<h_\ell<t_\ell,\ell \in \mathbf e} \|\Delta^{r,\mathbf e}_{\mathbf h}(f,.)\|_p.$$
For $\alpha>0$ and $0< p\leq \infty,$ the mixed H\"older space $SH^\alpha_{p,(d)}$ is the space of all functions $f:[0,1]^d\rightarrow \R$ such that
$$\|f\|_{SH^\alpha_{p,(d)}}:=\sum_{\mathbf e \subset \{1,\ldots,d\}} \sup_{\mathbf t >0} \prod_{\ell \in \mathbf e}{t_\ell ^{-\alpha}}w_{\lfloor \alpha \rfloor +1}^{\mathbf e}(f,\mathbf {t_e})_p$$
is finite, where by convention the term associated with $\mathbf e=\emptyset$ is $\|f\|_p.$ More generally, for $\alpha>0$ and $0<p,q\leq \infty,$ the mixed Besov space $SB^\alpha_{p,q,(d)}$ is the space of all functions $f:[0,1]^d\rightarrow \R$ 
such that
$$\|f\|_{SB^\alpha_{p,q,(d)}}:=\sum_{\mathbf e \subset \{1,\ldots,d\}} \left(\int_{(0,1)}\ldots\int_{(0,1)}\left( \prod_{\ell \in \mathbf e}{t_\ell ^{-\alpha}} w_{\lfloor \alpha \rfloor +1}^{\mathbf e}(f,\mathbf {t_e})_p\right)^q \prod_{\ell \in \mathbf e} \frac{\d t_\ell}{t_\ell}\right)^{1/q},$$
where the $\L_q$-norm is replaced by a sup-norm in case $q=\infty,$ so that $SB^{\alpha}_{p,\infty,(d)}=SH^{\alpha}_{p,(d)}.$ 
By comparison, the usual Besov space $B^\alpha_{p,q,(d)}$ may be defined as the space of all functions $f\in\L_p([0,1]^d)$ such that
$$\|f\|_{B^\alpha_{p,q,(d)}}:=
\left\{
\begin{array}{ll}
       \|f\|_p +\sum_{\ell=1}^d  \left(\int_{(0,1)} \left({t_\ell ^{-\alpha}} w_{\lfloor \alpha \rfloor +1}^{\{\ell\}}(f,t_\ell)_p\right)^q \frac{\d t_\ell}{t_\ell}\right)^{1/q}& \mbox{if } 0<q < \infty \\
      \|f\|_p +\sum_{\ell=1}^d \sup_{t_\ell>0}{t_\ell ^{-\alpha}} w_{\lfloor \alpha \rfloor +1}^{\{\ell\}}(f,t_\ell)_p & \mbox{if } q=\infty
\end{array}
\right.
$$
is finite. Extending~\eqref{eq:embedSobol}, the recent results of~\cite{NguyenSickelEmbed} confirm that the continuous embeddings
\begin{equation*}
B^{d\alpha}_{p,q,(d)}  \hookrightarrow  SB^\alpha_{p,q,(d)}  \hookrightarrow B^\alpha_{p,q,(d)},
\end{equation*}
hold under fairly general assumptions on $\alpha,p,q,d.$.

On the other hand, given $\alpha>0,0<p<\infty, 0<q \leq \infty,$ we define 
$$N_{\bm\Psi,\alpha,p,q}(f) = 
\left\{
\begin{array}{ll}
       \left(\sum_{\ell\geq dj_0} 2^{q\ell(\alpha+1/2-1/p)} \sum_{\bm j\in \bm J_\ell} \left(\sum_{\bm\lambda\in\bm{\nabla_j}} {|\langle f,\Psi_{\bm\lambda}\rangle|^p}\right)^{q/p}\right)^{1/q} & \mbox{if } 0<q < \infty \\
       \sup_{\ell\geq dj_0} 2^{\ell(\alpha+1/2-1/p)} \sup_{\bm j\in\bm J_\ell} \left(\sum_{\bm\lambda\in\bm{\nabla_j}} {|\langle f,\Psi_{\bm\lambda}\rangle|^p}\right)^{1/p} & \mbox{if } q=\infty
\end{array}
\right.
$$
and $N_{\bm\Psi,\alpha,\infty,q}$ in the same way by replacing the $\ell_p$-norm with a sup-norm. 
Then for $\alpha>0,0<p,q\leq \infty,R>0,$ we denote by $\mathcal {SB}(\alpha,p,q,R)$ the set of all functions $f\in\L_p([0,1]^d)$ such that 
$$N_{\bm\Psi,\alpha,p,q}(f)\leq R.$$
Under appropriate conditions on the smoothness of $\bm\Psi^\star,$ that we will assume to be satisfied in the sequel, the sets $\mathcal {SB}(\alpha,p,q,R)$ may be interpreted as balls with radius $R$ in Besov spaces with dominating mixed smoothness $SB^{\alpha}_{p,q,(d)}$ (see for instance~\cite{SchmeisserTriebel,HochmuthMixed,Heping,DTU}). Mixed Sobolev spaces are not easily characterized in terms of wavelet coefficients, but they satisfy the compact embeddings 
$$ SB^{\alpha}_{p,\min(p,2),(d)} \hookrightarrow SW^{\alpha}_{p,(d)} \hookrightarrow SB^{\alpha}_{p,\max(p,2),(d)}, \text{ for } 1<p<\infty$$
and
$$ SB^{\alpha}_{1,1,(d)} \hookrightarrow SW^{\alpha}_{1,(d)} \hookrightarrow SB^{\alpha}_{1,\infty,(d)}$$
(see~\cite{DTU}, Section 3.3). So, without loss of generality, we shall mostly turn our attention to Besov-H\"older spaces in the sequel.

	\subsection{Link with structural assumptions} The following property collects examples of composite functions with mixed dominating smoothness built from lower dimensional functions with classical Sobolev or Besov smoothness. The proof and upper-bounds for the norms of the composite functions are given in Section~\ref{sec:proofcomposite}. An analogous property for (mixed) Sobolev smoothness instead of (mixed) Besov smoothness can be proved straightforwardly. 

\begin{prop}\label{prop:composite} Let  $\alpha>0$ and  $0<p,q\leq \infty.$
\begin{enumerate}[(i)]
\item If $u_1,\ldots,u_d\in B^{\alpha}_{p,q,(1)},$ then $f(\mathbf x)=\sum_{\ell=1}^d u_\ell(x_\ell) \in SB^{\alpha}_{p,q,(d)}.$
\item Let $\mathfrak P$ be some partition of $\{1,\ldots,d\}.$ If, for all $I \in\mathfrak P,u_I \in B^{\alpha_I}_{p,q,(|I|)},$ then $f(\mathbf x)=\prod_{I\in\mathfrak P} u_I(\mathbf x_I)\in SB^{\bar \alpha}_{p,q,(d)}$ where $\bar\alpha=\min_{I\in\mathfrak P} (\alpha_I/|I|).$
\item Let $\alpha\in\N^\star$ and $p>1,$ if  $g\in W^{d\alpha}_{\infty,(1)}$ and $u_\ell \in W^\alpha_{p,{1}}$ for $\ell=1,\ldots,d,$ then $f(\mathbf x)=g\left(\sum_{\ell=1}^d u_\ell(x_\ell)\right)\in SW^{\alpha}_{p,(d)}.$
\item If $f\in SB^{\alpha}_{p,q,(d)}$  with $\alpha>1$ and $\partial^d f/\partial x_1\ldots\partial x_d\in\L_p([0,1]^d),$ then $\partial^d f/\partial x_1\ldots\partial x_d \in SB^{\alpha-1}_{p,q,(d)}.$
\item  If $f_1$ and $f_2\in SB^{\alpha}_{p,p,(d)}$ where either $1<p\leq \infty$ and $\alpha>1/p,$ or $p=1$ and $\alpha\geq1,$ then the product function $\mathbf x \mapsto f_1(\mathbf x)f_2(\mathbf x)\in SB^{\alpha}_{p,p,(d)}.$  
\end{enumerate}
\end{prop}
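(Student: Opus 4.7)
The plan is to treat the five parts separately, using the modulus-of-continuity definition of $SB^{\alpha}_{p,q,(d)}$ for (i)--(iii), and the wavelet-coefficient characterization $\mathcal{SB}(\alpha,p,q,R)$ for (iv) and (v).

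For (i), the key observation is that $\Delta^{r,\mathbf{e}}_{\mathbf{h}}$ kills every summand $u_\ell(x_\ell)$ as soon as $\mathbf{e}$ contains some index $k\neq \ell$, because $\Delta^{r}_{h_k,k}$ then acts on a function constant in $x_k$. Hence $\Delta^{r,\mathbf{e}}_{\mathbf{h}}(f,\cdot)=0$ for $|\mathbf{e}|\geq 2$, while for $\mathbf{e}=\{\ell\}$ it reduces to $\Delta^{r}_{h_\ell}(u_\ell,x_\ell)$, whose $\L_p$-norm on $[0,1]^d$ equals the univariate one by Fubini. This yields $\|f\|_{SB^{\alpha}_{p,q,(d)}} \leq \|f\|_p + \sum_\ell \|u_\ell\|_{B^{\alpha}_{p,q,(1)}}$, up to a constant.

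For (ii), the disjointness of the blocks $I\in\mathfrak{P}$ makes the mixed difference factorise,
\begin{equation*}
\Delta^{r,\mathbf{e}}_{\mathbf{h}}(f,\mathbf{x}) = \prod_{I\in\mathfrak{P}} \Delta^{r,\mathbf{e}\cap I}_{\mathbf{h}_I}(u_I,\mathbf{x}_I),
\end{equation*}
so Fubini and the supremum in $\mathbf{h}$ factorise the mixed moduli as well. Using the continuous embedding $B^{|I|\beta}_{p,q,(|I|)} \hookrightarrow SB^{\beta}_{p,q,(|I|)}$ recalled after the definition of $\mathcal{SB}$, we get $u_I\in SB^{\alpha_I/|I|}_{p,q,(|I|)}$; since $\bar\alpha \leq \alpha_I/|I|$ and $t_\ell\in(0,1)$, we may bound $t_\ell^{-\bar\alpha}\leq t_\ell^{-\alpha_I/|I|}$ coordinatewise, and a tensorised computation of the defining integral of $\|\cdot\|_{SB^{\bar\alpha}_{p,q,(d)}}$ concludes. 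For (iii), I use a Fa\`a di Bruno formula: since the $u_\ell$ depend on separate variables, the mixed derivative $\partial^{r_1+\ldots+r_d}f/\partial^{r_1}x_1\ldots\partial^{r_d}x_d$ with $r_\ell\leq \alpha$ is a finite sum of terms $g^{(k)}\!\left(\sum_\ell u_\ell(x_\ell)\right)\prod_\ell P_\ell(x_\ell)$, where $k\leq r_1+\ldots+r_d\leq d\alpha$ and $P_\ell$ is a polynomial in derivatives of $u_\ell$ of total order $r_\ell$. The hypothesis $g\in W^{d\alpha}_{\infty,(1)}$ bounds $g^{(k)}$ uniformly; Fubini factorises $\|\prod_\ell P_\ell\|_p$ into one-dimensional norms, which are controlled via the 1D Sobolev embedding $W^{\alpha}_{p,(1)}\hookrightarrow \L_\infty$ (valid when $p>1,\alpha\geq 1$) together with H\"older.

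For (iv), I would exploit the wavelet characterization: expressing the mixed derivative as $\sum_{\bm\lambda}\langle f,\Psi_{\bm\lambda}\rangle\,\partial^{d}\Psi^\star_{\bm\lambda}/\partial x_1\ldots\partial x_d$ and re-expanding each term in the dual hyperbolic basis, one sees that the induced mapping on wavelet coefficients is essentially diagonal with a multiplicative factor of order $2^{|\bm j|}=2^{\ell}$. Substituting in the definition of $N_{\bm\Psi,\alpha-1,p,q}$ absorbs exactly the extra $2^{\ell}$ per resolution level, so the resulting norm is dominated by $N_{\bm\Psi,\alpha,p,q}(f)$. For (v), I would decompose $f_1 f_2$ through a paraproduct separately in each coordinate, splitting the double sum over $(\bm\lambda,\bm\mu)$ according to which of the two multi-indices dominates componentwise, and bound each piece in the $N_{\bm\Psi,\alpha,p,p}$ norm by a Schur-type estimate on the resulting geometric series of resolution levels. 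The main obstacle will be the ``high--high'' interaction in (v): controlling it requires a mixed-smoothness analogue of the classical Bernstein/Moser estimate for products of wavelets coming from arbitrary, possibly distant levels in different coordinates, and it is precisely here that the condition $\alpha>1/p$ (or $\alpha\geq 1$ for $p=1$) enters to ensure absolute summability of the double series. A secondary technical point in (iv) is handling the boundary-adapted wavelets of~\cite{CohenInterval,Dahmen} when re-expanding derivatives, but this is minor thanks to the locality and finite-band properties in the assumptions of Section~\ref{sec:uniwave_assumptions}.
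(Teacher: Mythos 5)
Parts (i) and (iii) of your proposal are correct and essentially coincide with the paper's argument: the annihilation of all summands by any mixed difference involving two coordinates for (i), and the Fa\`a di Bruno expansion combined with the boundedness of the intermediate derivatives $u_\ell^{(r)}\in W^{\alpha-r}_{p,(1)}$ (via $\alpha-r>1/p$) for (iii). Part (ii) is a genuinely different route: you factorize the mixed differences across blocks and then invoke the embedding $B^{\alpha_I}_{p,q,(|I|)}\hookrightarrow SB^{\alpha_I/|I|}_{p,q,(|I|)}$ to tensorize; the paper instead works entirely inside the modulus framework, bounding the within-block mixed modulus by $\min\bigl\{w^{\{2\}}_{\bar r}(u_{2,3},t_2)_p,\,w^{\{3\}}_{\bar r}(u_{2,3},t_3)_p\bigr\}$ via $\|\Delta^{\bar r}_h(g,\cdot)\|_p\leq C\|g\|_p$ and converting the product of weights with the elementary inequality $2t_2^{-\bar\alpha}t_3^{-\bar\alpha}\leq t_2^{-2\bar\alpha}+t_3^{-2\bar\alpha}$, so as to land on the \emph{classical} Besov moduli of $u_I$. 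Your shortcut is acceptable in spirit, but note that the embedding you use is only quoted in the paper (from~\cite{NguyenSickelEmbed}) as holding ``under fairly general assumptions'' on $\alpha,p,q$, whereas the statement allows all $0<p,q\leq\infty$; the paper's hands-on computation is precisely a way to avoid relying on it.

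The genuine gaps are in (iv) and (v): in both cases you describe a program rather than give a proof, and the unproved step is exactly the mathematical content of the claim. For (iv), the assertion that differentiating term by term and re-expanding $\partial^d\Psi^\star_{\bm\lambda}/\partial x_1\cdots\partial x_d$ in the dual hyperbolic basis yields an ``essentially diagonal'' map with factor $2^{|\bm j|}$ is what has to be established: the re-expansion matrix is not diagonal, its off-diagonal decay requires quantified smoothness and vanishing-moment properties of the (boundary-adapted, biorthogonal) wavelets that go beyond the assumptions of Section~\ref{sec:uniwave_assumptions}, term-by-term differentiation of the expansion must be justified, and a Schur/almost-diagonality estimate on the weighted sequence spaces (delicate for $p<1$) is needed to conclude $N_{\bm\Psi,\alpha-1,p,q}(\partial^d f)\lesssim N_{\bm\Psi,\alpha,p,q}(f)$. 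None of this is carried out. The paper avoids the wavelet side entirely and deduces (iv) from a tensorized version of a classical inequality relating moduli of smoothness of a function and of its derivative (Theorem 4.1, Inequality (10) in~\cite{Potapov}; see also~\cite{DeVoreLorentz}, Chapter 6, Theorem 3.1). For (v), you yourself identify the ``high--high'' paraproduct interaction as the main obstacle and state that the condition $\alpha>1/p$ (or $\alpha\geq 1$ for $p=1$) ``enters here'' to make the double series summable --- but you do not prove this, and that estimate \emph{is} the multiplication theorem for mixed-smoothness Besov spaces; proving it from scratch is a substantial piece of work. The paper simply invokes the known result (Theorem 3.10 in~\cite{NguyenSickel}). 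As written, (iv) and (v) of your proposal would not compile into complete proofs without importing or reproving these results.
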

\noindent Notice that in $(i)$ (resp. $(ii), (iii)$), the assumptions on the component functions $u_\ell,u_I$ or $g$ are not enough to ensure that $f\in B^{d\alpha}_{p,q,(d)}$ (resp. $B^{d\bar\alpha}_{p,q,(d)}, W^{d\alpha}_{p,(d)}$).

\smallskip
\noindent
\textit{Remark: }We believe that a generalization of $(iii)$ to Besov or fractional Sobolev smoothness holds. Yet such a generalization would require refined arguments from Approximation Theory in the spirit of~\cite{BourdaudSickelComposition, MoussaiComposition}  which are beyond the scope of that paper.

The structural assumption $(ii)$ may be satisfied in the multivariate density estimation framework~\ref{sec:density} whenever $Y_1=(Y_{11},\ldots,Y_{1d})$ can be split into independent sub-groups of coordinates, and has recently been considered in~\cite{LepskiInd,RebellesPointwise,RebellesLp}. Case $(i)$ and its generalization $(iii)$ may not be directly of use in our multivariate intensity framework, but they will allow to draw a comparison with~\cite{HorowitzMammen,BaraudBirgeComposite}. Combining $(iii)$ and $(iv)$ is of interest for copula density estimation~\ref{sec:copula}, having in mind that a wide nonparametric family of copulas are Archimedean copulas (see~\cite{Nelsen}, Chapter 4), which have densities of the form
$$s(x_1,\ldots,x_d)=(\phi^{-1})'(x_1)\ldots (\phi^{-1})'(x_d) \phi^{(d)}\left(\phi^{-1}(x_1)+\ldots+\phi^{-1}(x_d)\right)$$
provided the generator $\phi$ is smooth enough (see for instance~\cite{McNeilNeslehova}). Combining $(iii), (iv),(v)$ may be of interest for L\'evy intensity estimation in~\ref{sec:levydensitycont} or \ref{sec:levydensitydisc}. Indeed, a popular way to build multivariate L\'evy intensities is based on L\'evy copulas studied in~\cite{KallsenTankov} (see also~\cite{ContTankov}, Chapter 5). The resulting L\'evy intensities then have the form  
$$f(x_1,\ldots,x_d)= f_1(x_1)\ldots f_d(x_d) F^{(1,\ldots,1)}(U_1(x_1)+\ldots +U_d(x_d))$$
where $F$ is a so-called L\'evy copula,  $F^{(1,\ldots,1)}=\partial^d {F}/ \partial t_1 \ldots \partial t_d$ and $U_\ell(x_\ell)=\int_{x_\ell}^\infty f_\ell(t)\d t.$ Besides, a common form for $F$ is 
$$F(x)=\phi\left(\phi^{-1}(x_1)+\ldots+\phi^{-1}(x_d)\right)$$
under appropriate smoothness assumptions on $\phi.$ Last, let us emphasize that any linear combination (mixtures for instance) of functions in $SB^{\alpha}_{p,q,(d)}$ inherits the same smoothness. Consequently, mixed dominating smoothness may be thought as a fully nonparametric surrogate for a wide range of structural assumptions.

	\subsection{Approximation qualities and minimax rate}

We provide in Section~\ref{sec:proofapprox} a constructive proof for the following nonlinear approximation result, in the spirit of~\cite{BBM}.
\begin{theo}\label{theo:approx}
Let  $R>0,0<p<\infty, 0<q\leq \infty,\alpha >\max(1/p-1/2,0)$,  and $f\in\L_2([0,1]^d)\cap \mathcal {SB}(\alpha,p,q,R).$ Under~\eqref{eq:N}, for all $\ell_1\in\{dj_0+1,\ldots,L_\bullet +1\}$, there exists some model $m_{\ell_1}(f)\in\mathcal M_{\ell_1}^{\mathcal P}$ and some approximation $A(f,\ell_1)\in S^\star_{m_{\ell_1}(f)}$ for $f$ such that 
\begin{align*}\label{eq:upperboundapprox}
&\|f-A(f,\ell_1)\|^2_{\bm\Psi} \\&\leq C(B,j_0,\alpha,p, d) R^2\left( L_\bullet^{2(d-1)(1/2-1/q)_+} 2^{-2L_\bullet(\alpha-(1/p-1/2)_+)}+\ell_1^{2(d-1)(1/2-1/\max(p,q))}2^{-2\alpha \ell_1} \right).
\end{align*}
\end{theo}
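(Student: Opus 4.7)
The plan is to exploit the biorthogonal Riesz basis structure to reduce the approximation problem to one purely about coefficients, and then combine a tail estimate beyond level $L_\bullet$ with a level-by-level Stechkin-type nonlinear estimate on levels $\ell_1\leq \ell\leq L_\bullet$. Because $\{\Psi_{\bm\lambda}\}$ and $\{\Psi^\star_{\bm\lambda}\}$ are biorthogonal with $\{\Psi_{\bm\lambda}\}$ a Riesz basis, for any $m\in\mathcal M^{\mathcal P}$ the best approximant of $f$ in $S^\star_m$ for the norm $\|\cdot\|_{\bm\Psi}$ is $A(f,m)=\sum_{\bm\lambda\in m}\langle f,\Psi_{\bm\lambda}\rangle\Psi^\star_{\bm\lambda}$, with
$$\|f-A(f,m)\|^2_{\bm\Psi}=\sum_{\bm\lambda\notin m}|\langle f,\Psi_{\bm\lambda}\rangle|^2.$$
For $m\in\mathcal M^{\mathcal P}_{\ell_1}$ the omitted indices split into the tail $\bigcup_{\ell>L_\bullet}U\bm\nabla(\ell)$ and, for each $\ell_1\leq\ell\leq L_\bullet$, the complement $U\bm\nabla(\ell)\setminus m(\ell)$. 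I will define $m_{\ell_1}(f)$ by taking, at every level $\ell\in[\ell_1,L_\bullet]$, the indices of the $N(\ell_1,\ell-\ell_1)$ largest hyperbolic coefficients of $f$ in $U\bm\nabla(\ell)$.

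For the tail, set $b_{\bm j}:=(\sum_{\bm\lambda\in\bm\nabla_{\bm j}}|\langle f,\Psi_{\bm\lambda}\rangle|^p)^{1/p}$. The Besov-type assumption gives $(\sum_{\bm j\in\bm J_\ell}b_{\bm j}^q)^{1/q}\leq R\cdot 2^{-\ell(\alpha+1/2-1/p)}$. When $p\leq 2$ the $\ell^2$-norm of coefficients at sub-level $\bm j$ is $\leq b_{\bm j}$; when $p>2$ it is $\leq |\bm\nabla_{\bm j}|^{1/2-1/p}b_{\bm j}\asymp 2^{\ell(1/2-1/p)}b_{\bm j}$. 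Summing over $\bm j\in\bm J_\ell$ and using Hölder to pass from $\ell^q$ to $\ell^2$ (losing at most a factor $|\bm J_\ell|^{(1/2-1/q)_+}\asymp \ell^{(d-1)(1/2-1/q)_+}$), one obtains
$$\sum_{\bm\lambda\in U\bm\nabla(\ell)}|\langle f,\Psi_{\bm\lambda}\rangle|^2 \;\leq\; C\,R^2\,\ell^{2(d-1)(1/2-1/q)_+}\,2^{-2\ell(\alpha-(1/p-1/2)_+)}.$$
Since $\alpha>(1/p-1/2)_+$, the geometric series in $\ell>L_\bullet$ is dominated by the first term, yielding the first summand of the stated bound.

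For the sparse part, at each $\ell\in[\ell_1,L_\bullet]$ and for $p<2$ I apply Stechkin's inequality to the sequence $a_\ell=(\langle f,\Psi_{\bm\lambda}\rangle)_{\bm\lambda\in U\bm\nabla(\ell)}$: the best $N$-term approximation error in $\ell^2$ satisfies $\sigma_N(a_\ell)_2^2\leq C\|a_\ell\|_p^2 N^{-2(1/p-1/2)}$ with $N=N(\ell_1,\ell-\ell_1)$ (for $p\geq 2$ no sparsity gain is possible and I simply reuse the trivial bound from the tail argument). Controlling $\|a_\ell\|_p^p=\sum_{\bm j\in\bm J_\ell}b_{\bm j}^p$ in terms of the $\ell^q$-quantity via the embedding inequality $\|b\|_p\leq |\bm J_\ell|^{(1/p-1/q)_+}\|b\|_q$, and using from~\eqref{eq:N} and Proposition~\ref{prop:combinatorial} that $N(\ell_1,k)\asymp\ell^{d-1}(k+2)^{-(d+2)}2^{\ell_1}$ with $k=\ell-\ell_1$, a direct computation shows
$$\sigma_N(a_\ell)_2^2 \;\leq\; C\,R^2\,\ell^{2(d-1)[(1/p-1/q)_+-(1/p-1/2)]}\,(k+2)^{2(d+2)(1/p-1/2)}\,2^{-2k(\alpha+1/2-1/p)}\,2^{-2\ell_1\alpha}.$$
The exponent $2(d-1)[(1/p-1/q)_+-(1/p-1/2)]$ simplifies to $2(d-1)(1/2-1/\max(p,q))$, and since it is negative when $\max(p,q)<2$ it can be harmlessly upper-bounded at $\ell=\ell_1$. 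Summing in $k\geq 0$ is then a geometric series (convergent because $\alpha+1/2-1/p>0$) that absorbs the polynomial $(k+2)^{2(d+2)(1/p-1/2)}$, producing the second summand $R^2\ell_1^{2(d-1)(1/2-1/\max(p,q))}2^{-2\alpha\ell_1}$.

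The main technical obstacle will be bookkeeping the polylogarithmic factors through the two Hölder conversions ($\ell^q\to\ell^2$ for the tail, $\ell^q\to\ell^p$ for the sparse part) combined with the cost of Stechkin, and verifying in each of the cases $p\lessgtr 2$, $q\lessgtr 2$, $p\lessgtr q$ (and $q=\infty$, where sums over $\bm j\in\bm J_\ell$ are replaced by suprema but $|\bm J_\ell|\asymp\ell^{d-1}$ plays the same role) that the exponent $(1/2-1/\max(p,q))$ and the hypothesis $\alpha>(1/p-1/2)_+$ are exactly what is needed for both the geometric summability in $k$ and the absorption of log-factors at $\ell=\ell_1$.
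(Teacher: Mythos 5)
Your proposal is correct and follows essentially the same route as the paper: the same model $m_{\ell_1}(f)$ keeping the $N(\ell_1,k)$ largest coefficients per level, the same split into a tail beyond $L_\bullet$ handled by $\ell_p\to\ell_2$ and $\ell_q\to\ell_2$ H\"older conversions with a geometric series, and the same Stechkin-type bound (Lemma 4.16 in Massart in the paper) combined with the lower bound on $N(\ell_1,k)$ from~\eqref{eq:N} for the sparse levels, with matching exponent algebra including the identification $2(d-1)[(1/p-1/q)_+-(1/p-1/2)]=2(d-1)(1/2-1/\max(p,q))$.
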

\noindent
\textit{Remark:} When $p\geq 2,$ the same kind of result still holds with all $N(\ell_1,k)=0.$ But Assumption~\eqref{eq:N} is really useful when $p<2,$ the so-called non-homogeneous smoothness case.

\medskip
\noindent
The first term in the upper-bound is a linear approximation error by the highest dimensional model $S^\star_{m_\bullet}$ in the collection. As $D_{m_\bullet}$ is of order $L_\bullet^{d-1}2^{L_\bullet},$ we deduce from~\cite{DTU} (Section 4.3) that this first term is optimal over $SB^\alpha_{p,q},$ at least for $1< p <\infty,1\leq q\leq \infty$ and $\alpha>\max(1/p-1/2,0),$ for instance. The second term in the upper-bound is a nonlinear approximation error of $f$ within the model $S^\star_{m_{\ell_1}(f)},$ with dimension $D_{m_{\ell_1}(f)}$ of order $\ell_1^{d-1}2^{\ell_1}.$ So we deduce from~\cite{DTU} (Theorem 7.6) that this second term, which is of order $D_{m_{\ell_1}(f)}^{-2\alpha}(\log D_{m_{\ell_1}(f)} )^{2(d-1)(\alpha+1/2-1/q)},$  is also optimal up to a constant factor over $SB^\alpha_{p,q},$ at least for $1< p <\infty, p\leq q\leq \infty$ and $\alpha>\max(1/p-1/2,0).$ Notice that, under the classical Besov smoothness assumption $f\in\L_2([0,1]^d)\cap B^{\alpha}_{p,q,(d)},$ the best possible approximation rate for $f$ by $D$-dimensional linear subspaces in the $\L_2$-norm would be of order $D^{-2\alpha/d}.$ Thus with a mixed smoothness of order $\alpha$ in dimension $d,$ we recover the same approximation rate as with a classical smoothness of order $d\alpha$ in dimension $d,$ up to a logarithmic factor. 

Let us define, for $\alpha,p,q,R,R'>0,$
$$\overline{\mathcal {SB}}(\alpha,p,q,R,R')=\{f \in \mathcal {SB}(\alpha,p,q,R)/ \|f\|_\infty \leq R'\}.$$ In the sequel, we use the notation $a\asymp C(\theta) b$ when there exist positive reals $C_1(\theta),C_2(\theta)$ such that $C_1(\theta) b\leq a \leq C_2(\theta) b.$

\begin{corol}\label{corol:upperrate} Assume $L_\bullet$ is large enough, then for all $0<p<\infty,0<q\leq \infty,\alpha>(1/p-1/2)_+,R\geq \bar n^{-1},R'>0,$
$$\sup_{s\in\overline{\mathcal {SB}}(\alpha,p,q,R,R')} \E_s \left[\|s-\tilde s^{\mathcal P}\|^2_{\bm\Psi}\right] \leq C(B,d,\alpha,p,R') \left( \left(\log(\bar n R^2)\right)^{(d-1)(\alpha+1/2-1/\max(p,q))} R\bar n^{-\alpha}\right)^{2/(1+2\alpha)}.$$
\end{corol}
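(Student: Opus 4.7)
\emph{Proof plan.} The plan is to combine the oracle inequality of Theorem~\ref{theo:choosepen} with the approximation bound of Theorem~\ref{theo:approx}, and to tune the approximation level $\ell_1$ to balance bias and variance.

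For $s\in\overline{\mathcal{SB}}(\alpha,p,q,R,R')$, set $\bar R=\max(R',1)\geq\|s\|_\infty$. Theorem~\ref{theo:choosepen} yields
$$\E_s\bigl[\|s-\tilde s^{\mathcal P}\|^2_{\bm\Psi}\bigr]\leq C_1\min_{m\in\mathcal M^{\mathcal P}}\bigl(\|s-s^\star_m\|^2_{\bm\Psi}+\bar R\,D_m/\bar n\bigr)+C(R')\,\bar n^{-1}(1+w(\bar n)),$$
and the residual term is of smaller order than the target rate $(R\bar n^{-\alpha})^{2/(1+2\alpha)}$ under $R\geq\bar n^{-1}$ (since then $R^{2/(1+2\alpha)}\bar n^{-2\alpha/(1+2\alpha)}\geq \bar n^{-1}$), so it can be absorbed into the final constant. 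By Theorem~\ref{theo:approx} and Proposition~\ref{prop:combinatorial}, for every $\ell_1\in\{dj_0+1,\ldots,L_\bullet+1\}$ there exists $m_{\ell_1}(s)\in\mathcal M^{\mathcal P}_{\ell_1}$ with $D_{m_{\ell_1}(s)}\leq C(j_0,B,d)\,\ell_1^{d-1}2^{\ell_1}$ and, with $A:=2(d-1)(1/2-1/\max(p,q))$,
$$\|s-s^\star_{m_{\ell_1}(s)}\|^2_{\bm\Psi}\leq CR^2\bigl(\ell_1^{A}\,2^{-2\alpha\ell_1}+L_\bullet^{2(d-1)(1/2-1/q)_+}2^{-2L_\bullet(\alpha-(1/p-1/2)_+)}\bigr).$$
With $L_\bullet$ taken maximal ($2^{L_\bullet}\asymp\bar n/(\log\bar n)^{2d}$), the linear term is $\lesssim R^2\bar n^{-2(\alpha-(1/p-1/2)_+)}(\log\bar n)^{2d}$, which is again dominated by the target rate and is likewise absorbed.

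It therefore suffices to minimize
$$G(\ell_1):= CR^2\ell_1^{A}2^{-2\alpha\ell_1}+C(R',j_0,B,d)\,\ell_1^{d-1}2^{\ell_1}/\bar n$$
over $\ell_1\in\{dj_0+1,\ldots,L_\bullet+1\}$. Equating the two terms gives the balance equation $2^{(1+2\alpha)\ell_1^\star}\asymp (R^2\bar n/\bar R)\,\ell_1^{\star\,A-(d-1)}$, solved by taking $\ell_1^\star$ to be the nearest admissible integer to $(1+2\alpha)^{-1}\log_2(R^2\bar n)$. Both terms of $G(\ell_1^\star)$ are then of common order $\ell_1^{\star\,(2\alpha(d-1)+A)/(1+2\alpha)}R^{2/(1+2\alpha)}\bar n^{-2\alpha/(1+2\alpha)}$, and since $(2\alpha(d-1)+A)/(1+2\alpha)=2(d-1)(\alpha+1/2-1/\max(p,q))/(1+2\alpha)$, this equals $\bigl((\log(\bar nR^2))^{(d-1)(\alpha+1/2-1/\max(p,q))}R\bar n^{-\alpha}\bigr)^{2/(1+2\alpha)}$, as claimed.

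The main subtlety lies in the logarithmic factor: a crude balance $2^{\ell_1^\star}\asymp(R^2\bar n)^{1/(1+2\alpha)}$ that ignores the $\ell_1^{\star\,A-(d-1)}$ correction in the balance equation would yield the coarser log-exponent $\max(A,d-1)$, whereas keeping the correction yields the sharp $(2\alpha(d-1)+A)/(1+2\alpha)$. Edge cases where the unconstrained minimizer $(1+2\alpha)^{-1}\log_2(R^2\bar n)$ falls outside $\{dj_0+1,\ldots,L_\bullet+1\}$ (very small or very large $R$) are handled by taking the closer endpoint; the hypothesis $R\geq\bar n^{-1}$ together with the maximal choice of $L_\bullet$ ensures that $G$ at that endpoint is still at most $(R\bar n^{-\alpha})^{2/(1+2\alpha)}$ up to the claimed log factor.
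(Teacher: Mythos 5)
Your strategy is exactly the paper's: plug the approximation bound of Theorem~\ref{theo:approx} (with the dimension bound of Proposition~\ref{prop:combinatorial}) into the oracle inequality of Theorem~\ref{theo:choosepen}, and choose $\ell_1$ to balance $R^2\ell_1^{A}2^{-2\alpha\ell_1}$ against $\ell_1^{d-1}2^{\ell_1}/\bar n$ while keeping the logarithmic correction in the balance; your identification of the sharp exponent $(2\alpha(d-1)+A)/(1+2\alpha)=2(d-1)(\alpha+1/2-1/\max(p,q))/(1+2\alpha)$ agrees with the paper's computation.

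There is, however, an internal inconsistency at the decisive step. You correctly write the corrected balance equation $2^{(1+2\alpha)\ell_1^\star}\asymp R^2\bar n\,\ell_1^{\star\,A-(d-1)}$, but then claim it is solved by the nearest admissible integer to $(1+2\alpha)^{-1}\log_2(R^2\bar n)$. That choice gives $2^{(1+2\alpha)\ell_1^\star}\asymp R^2\bar n$, which is precisely the crude balance you warn against: since $A-(d-1)=-2(d-1)/\max(p,q)\neq 0$ whenever $\max(p,q)<\infty$, the exact solution differs from your $\ell_1^\star$ by an additive term of order $\log\log(\bar nR^2)$, which a rounding to the nearest integer cannot absorb. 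With your literal $\ell_1^\star$, the two terms of $G$ are of orders $\ell_1^{\star A}(R\bar n^{-\alpha})^{2/(1+2\alpha)}$ and $\ell_1^{\star(d-1)}(R\bar n^{-\alpha})^{2/(1+2\alpha)}$, so their maximum carries the coarser exponent $d-1$ (as $A\leq d-1$), not the claimed common order, and the corollary's log factor is not reached. The fix is the paper's choice, $2^{\ell_1^\star}\asymp\bigl((\log(\bar nR^2))^{-2(d-1)/\max(p,q)}\,\bar nR^2\bigr)^{1/(1+2\alpha)}$; with that, your final computation goes through verbatim. A secondary caveat: your assertion that the linear term $R^2L_\bullet^{2(d-1)(1/2-1/q)_+}2^{-2L_\bullet(\alpha-(1/p-1/2)_+)}$ is automatically dominated under the maximal admissible $2^{L_\bullet}\asymp\bar n(\log\bar n)^{-2d}$ is not true uniformly over the stated parameter range (it can fail when $\alpha-(1/p-1/2)_+<\alpha/(1+2\alpha)$ and $R$ is of order one); the paper hides this behind the hypothesis that $L_\bullet$ is ``large enough,'' and you should invoke that hypothesis rather than claim unconditional domination.
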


\begin{proof} In order to minimize approximately the upper-bound, we choose $\ell_1$ such that
$$ \ell_1^{2(d-1)(1/2-1/\max(p,q))} 2^{-2\alpha \ell_1} R^2  \asymp C(\alpha,p,q,d)  \ell_1^{d-1} 2^{\ell_1}/\bar n,$$
that is for instance
 $$2^{\ell_1} \asymp C(\alpha,p,q,d) \left( \left(\log(\bar n R^2)\right)^{-2(d-1)/\max(p,q)}  (\bar n R^2)\right)^{1/(1+2\alpha)},$$
 which yields the announced upper-bound.
\end{proof}

\noindent
Remember that a similar result holds when replacing the $\bm\Psi$-norm by the equivalent $\L_2$-norm. Though unusual, the upper-bound in Corollary~\ref{corol:upperrate} is indeed related to the minimax rate. 
\begin{prop}  In the density estimation framework, assume $R^2\geq  n^{-1},R'>0,p>0, 0<q\leq \infty,$  and either $\alpha>(1/p-1/2)_+$ and $q\geq 2$ or $\alpha>(1/p-1/2)_++1/\min(p,q,2) - 1/\min(p,2),$ then
$$\inf_{\hat s \text{ estimator of } s }\sup_{s\in\overline{\mathcal {SB}}(\alpha,p,q,R,R')} \E_s \left[\|s-\hat s\|^2\right] \asymp C(\alpha,p,q,d)\left( \left(\log(nR^2)\right)^{(d-1)(\alpha+1/2-1/q)} R n^{-\alpha}\right)^{2/(1+2\alpha)}.$$
\end{prop}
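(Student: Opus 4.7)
Corollary~\ref{corol:upperrate} is stated for the norm $\|\cdot\|_{\bm\Psi},$ but by the Riesz basis property this norm is equivalent to the usual $\L_2$-norm defined in~\eqref{eq:usualnorm}. Since density estimation corresponds to $\bar n = n,$ we obtain directly
\[
\sup_{s\in\overline{\mathcal{SB}}(\alpha,p,q,R,R')} \E_s\left[\|s-\tilde s^{\mathcal P}\|^2\right] \leq C(\cdot)\, \big(\bigl(\log(nR^2)\bigr)^{(d-1)(\alpha+1/2-1/\max(p,q))}\,R\,n^{-\alpha}\big)^{2/(1+2\alpha)}.
\]
Under the hypothesis $q\geq 2$ we have $\max(p,q)=q,$ and under the alternative hypothesis on $\alpha,p,q$ one checks that the exponent $1/\max(p,q)$ in the upper bound can be replaced by $1/q$ (at the cost of a larger multiplicative constant), matching the target rate.

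\textbf{Lower bound: strategy.} I follow the classical Fano/Assouad scheme adapted to hyperbolic wavelet bases. Let $\ell^\star$ be the balanced level identified in the proof of Corollary~\ref{corol:upperrate}, so that
\[
2^{\ell^\star} \asymp C(\alpha,p,q,d)\bigl((\log(nR^2))^{-2(d-1)/q}(nR^2)\bigr)^{1/(1+2\alpha)}.
\]
Choose a subset $\bm J^\star\subset \bm J_{\ell^\star}$ of anisotropy indices and, for each $\bm j\in\bm J^\star,$ a subset $T_{\bm j}\subset\bm\nabla_{\bm j}$ of wavelets with cardinality $N^\star_{\bm j}\leq \sharp\bm\nabla_{\bm j}.$ Set $m^\star=\bigcup_{\bm j\in\bm J^\star}T_{\bm j}$ and $N^\star=\sharp m^\star.$ For each sign sequence $\bm\epsilon\in\{-1,+1\}^{m^\star}$ define
\[
s_{\bm\epsilon}(x) = \BBone_{[0,1]^d}(x) + \delta \sum_{\bm\lambda\in m^\star} \epsilon_{\bm\lambda}\,\Psi^\star_{\bm\lambda}(x),
\]
with $\delta>0$ to be tuned. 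The polynomial reproducibility ($S.ix$) guarantees $\int \Psi^\star_{\bm\lambda}=0,$ hence $\int s_{\bm\epsilon}=1,$ and for $\delta$ small enough $s_{\bm\epsilon}\geq 0,$ $\|s_{\bm\epsilon}\|_\infty\leq R'.$

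\textbf{Lower bound: calibration.} Three constraints must be met simultaneously. First, $s_{\bm\epsilon}\in\mathcal{SB}(\alpha,p,q,R):$ from the biorthogonality, the nontrivial $\bm\Psi$-coefficients have modulus $\delta,$ so
\[
N_{\bm\Psi,\alpha,p,q}(s_{\bm\epsilon}) \leq C\,\delta\, 2^{\ell^\star(\alpha+1/2-1/p)}\bigl(\sharp\bm J^\star\bigr)^{1/q}\max_{\bm j}N_{\bm j}^{\star\,1/p},
\]
which must be $\leq R.$ Second, by Varshamov--Gilbert one extracts $K\geq 2^{N^\star/8}$ sign-sequences with pairwise Hamming distance $\geq N^\star/8,$ giving $\|s_{\bm\epsilon}-s_{\bm\epsilon'}\|^2\gtrsim \delta^2 N^\star.$ Third, since $\inf s_{\bm\epsilon}\geq 1/2,$
\[
\mathrm{KL}(s_{\bm\epsilon}^{\otimes n},\,s_{\bm\epsilon'}^{\otimes n}) \leq 2n\|s_{\bm\epsilon}-s_{\bm\epsilon'}\|^2 \leq 8n\delta^2 N^\star.
\]
Fano's lemma demands the KL bound $\leq c\log K \asymp cN^\star,$ forcing $\delta^2\asymp 1/n.$ Plugging this into the Besov constraint, maximizing $\delta^2 N^\star$ subject to the norm bound and choosing $\sharp \bm J^\star \asymp \sharp \bm J_{\ell^\star}\asymp \ell^{\star\,d-1}$ yields exactly the announced rate; the $(d-1)(\alpha+1/2-1/q)$-factor arises from the $\ell^q$ aggregation over anisotropy directions at level $\ell^\star.$

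\textbf{Main obstacle.} The delicate step is balancing the trade-off between the size $\sharp\bm J^\star$ of the anisotropy support and the per-direction sparsity $N^\star_{\bm j},$ according to the regime of $(p,q).$ When $q\geq 2$ (homogeneous in $\bm j$), saturating all directions with equal load is optimal; when $p<2<q$ or the alternative condition on $\alpha,p,q$ holds, one has to concentrate the perturbation on a well-chosen sub-family of directions and a sparse subset within each $\bm\nabla_{\bm j},$ which is precisely where the exponent $1/\min(p,q,2)-1/\min(p,2)$ in the alternative hypothesis enters and where the combinatorics in the proof of Proposition~\ref{prop:combinatorial} is mirrored at the lower-bound level.
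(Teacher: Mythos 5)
Your plan has a genuine gap on the upper-bound side. The proposition is a two-sided statement about the minimax risk, with logarithmic exponent $(d-1)(\alpha+1/2-1/q)$, and your upper bound is borrowed from Corollary~\ref{corol:upperrate}, whose exponent is $(d-1)(\alpha+1/2-1/\max(p,q))$. Your claim that ``under the hypothesis $q\geq 2$ we have $\max(p,q)=q$'' is false whenever $p>q\geq 2$, which is allowed by the hypotheses; and in that regime (and likewise for $p>q$ under the alternative hypothesis) the discrepancy between $1/\max(p,q)=1/p$ and $1/q$ is a genuine positive power of $\log(nR^2)$, which cannot be ``replaced \ldots at the cost of a larger multiplicative constant.'' Indeed the paper states immediately after the proposition that the penalized estimator is minimax only up to a logarithmic factor when $p>q$; so Corollary~\ref{corol:upperrate} simply cannot deliver the matching upper bound there, and your argument supplies nothing else in its place. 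The lower-bound part is a reasonable Fano/Varshamov--Gilbert sketch (a single-level perturbation at the balanced level $\ell^\star$ does produce the exponent $(d-1)(\alpha+1/2-1/q)$ after the computation you allude to), but as written the decisive calibration is asserted rather than carried out, the admissibility checks (zero integral of $\Psi^\star_{\bm\lambda}$ when some coordinates sit at the scaling level $j_0$, nonnegativity and the $\|s_{\bm\epsilon}\|_\infty\leq R'$ constraint via Lemma~\ref{lemm:localization}, which implicitly restricts $R$) are skipped, and your ``main obstacle'' paragraph concedes that the regime covered by the alternative hypothesis on $\alpha,p,q$ is exactly the part you have not done.

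For comparison, the paper's proof is much shorter and entirely different: it quotes the Kolmogorov $\epsilon$-entropy of $\mathcal{SB}(\alpha,p,q,R)$, namely $(R/\epsilon)^{1/\alpha}\left(\log(R/\epsilon)\right)^{(d-1)(\alpha+1/2-1/q)/\alpha}$, from~\cite{DTU} (Theorem 6.20) and~\cite{Dung}, and then invokes Proposition 1 of~\cite{YangBarron}, which identifies the minimax density-estimation risk with the solution $\rho_n^2$ of $\rho_n^2=H_{\rho_n}/n$. That route yields the upper and lower bounds simultaneously (the upper bound coming from a non-adaptive entropy-based argument, not from $\tilde s^{\mathcal P}$), which is precisely how the $1/q$ exponent, and the restrictions on $\alpha,p,q$ inherited from the entropy results, appear. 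If you want to keep a self-contained construction, you would need, in addition to completing your Fano calibration, a separate estimator (or an entropy argument) attaining the $1/q$ rate when $p>q$.
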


\begin{proof}
One may derive from~\cite{DTU} (Theorem 6.20), ~\cite{Dung} (proof of Theorem 1) and the link between entropy number and Kolmogorov entropy that the Kolmogorov $\epsilon$-entropy of $\mathcal {SB}(\alpha,p,q,R)$ is
$$H_\epsilon(\alpha,p,q,R)=(R/\epsilon)^{1/\alpha} \left(\log(R/\epsilon)\right)^{(d-1)(\alpha+1/2-1/q)/\alpha}.$$
According to~\cite{YangBarron} (Proposition 1), in the density estimation framework, the minimax risk over $\overline{\mathcal {SB}}(\alpha,p,q,R,R')$ is of order $\rho^2_n$ where $\rho^2_n=H_{\rho_n}(\alpha,p,q,R)/n,$ which yields the announced rate. 
\end{proof}

\noindent
Consequently, in the density estimation framework, the penalized pyramid selection procedure is minimax over $\mathcal {SB}(\alpha,p,q,R)$ up to a constant factor if $p\leq q\leq \infty,$ and only up to a logarithmic factor otherwise.

Let us end with some comments about these estimation rates. First, we remind that the minimax rate under the assumption $s\in B^\alpha_{p,q,(d)}$ is of order $n^{-2\alpha/d/(1+2\alpha/d)}.$ Thus, under a mixed smoothness assumption of order $\alpha,$ we recover, up to a logarithmic factor, the same rate as with smoothness of order $\alpha$ in dimension 1, which can only be obtained with smoothness of order $d\alpha$ under a classical smoothness assumption in dimension $d$. Besides, under the multiplicative constraint $(ii)$ of Proposition~\ref{prop:composite}, we recover the same rate as~\cite{RebellesLp}, up to a logarithmic factor. And under the generalized additive constraint $(iii)$ of Proposition~\ref{prop:composite}, we recover the same rate as~\cite{BaraudBirgeComposite} (Section 4.3), up to a logarithmic factor. Regarding Neumann seminal work on estimation under mixed smoothness~\cite{Neumann} (see his Section 3), a first adaptive wavelet thresholding is proved to be optimal up to a logarithmic factor over $SW^{r}_{2,(d)}=SB^{r}_{2,2,(d)},$ and another, nonadaptive one, is proved to be optimal up to a constant over $SB^{r}_{1,\infty,(d)},$ where $r$ is a positive integer. Our procedure thus outperforms~\cite{Neumann} by being at the same time adaptive and minimax optimal up to a constant over these two classes, and many other ones.

\section{Implementing wavelet pyramid selection}\label{sec:algorithm}

We end this paper with a quick overview of practical issues related to wavelet pyramid selection. As we perform selection within a large collection of models, where typically the number of models is exponential in the sample size, we must guarantee that the estimator can still be computed in a reasonable time. Besides, we provide simulation based examples illustrating the interest of this new method. 

	\subsection{Algorithm and computational complexity}\label{sec:twostepalgo} Theorem~\ref{theo:choosepen} supports the choice of an additive penalty of the form 
$$\pen(m)=\sum_{\bm\lambda\in m} \hat v^2_{\bm\lambda},$$
where detailed expressions for $\hat v^2_{\bm\lambda}$  in several statistical frameworks have been given in Section~\ref{sec:backtoex}. 
As $\hat \gamma(\hat s_m^\star)=-\sum_{\bm\lambda\in m} \hat \beta^2_{\bm\lambda},$ the penalized selection procedure amounts to choose
$$\hat m ^{\mathcal P}=\argmax{m\in\mathcal M^{\mathcal P}}{\text{crit}(m)}$$
where 
$$\text{crit}(m)=\sum_{\bm\lambda \in m}(\hat \beta^2_{\bm\lambda}- \hat v_{\bm\lambda}^2).$$
Since each $\hat v^2_{\bm\lambda}$ is roughly an (over)estimate for the variance of $\hat \beta^2_{\bm\lambda}, $  our method, though different from a thresholding procedure, will mainly retain empirical wavelet coefficients $\hat \beta^2_{\bm\lambda}$ which are significantly larger than their variance.

A remarkable thing is that, due to both the structure of the collection of models and of the penalty function, the penalized estimator can be determined without computing all the preliminary estimators $(\hat s^\star_m)_{m\in\mathcal M^{\mathcal P}},$ which makes the computation of $\tilde s^{\mathcal P}$ feasible in practice. Indeed, we can proceed as follows. 

\textit{Step 1.} For each $\ell_1\in\{dj_0+1,\ldots,L_\bullet+1\}$, determine 
$$\hat m_{\ell_1}=\argmax{m\in\mathcal M_{\ell_1}^{\mathcal P}}{\sum_{\bm\lambda \in m}(\hat \beta^2_{\bm\lambda}- \hat v_{\bm\lambda}^2)}.$$
For that purpose, it is enough, for each $k\in\{0,\ldots,L_\bullet-\ell_1\},$ to 
\begin{itemize}
\item compute and sort in decreasing order all the coefficients $(\hat \beta^2_{\bm\lambda}- \hat v_{\bm\lambda}^2)_{\bm\lambda \in U\bm\nabla(\ell_1+k)};$
\item keep the $N(\ell_1,k)$ indices in $U\bm\nabla(\ell_1+k)$ that yield the $N(\ell_1,k)$ greatest such coefficients.
\end{itemize}

\textit{Step 2.} Determine the integer $\hat \ell \in\{dj_0+1,\ldots,L_\bullet+1\}$ such that
$$\hat m_{\hat \ell}=\argmax{dj_0+1\leq \ell_1 \leq L_\bullet+1}{\text{crit}(\hat m_{\ell_1})}.$$

\noindent
The global computational complexity of $\tilde s^{\mathcal P}$ is thus $\mathcal O (\log(L_\bullet) L_\bullet^d 2^{L_\bullet}).$ Typically, we will choose $L_\bullet$ at most of order $\log_2(\bar n)$ so the resulting computational complexity will be at most of order $\mathcal O (\log(\log(\bar n))\log^d(\bar n)\bar n).$

	\subsection{Illustrative examples}

In this section, we study two examples in dimension $d=2$ by using Haar wavelets. 

First, in the density estimation framework, we consider an example where the coordinates of $Y_i=(Y_{i1},Y_{i2})$ are independent conditionally on a $K$-way categorical variable $Z,$ so that the density of $Y_i$ may be written as 
$$s(x_1,x_2)=\sum_{k=1}^K \pi_k s_{1,k}(x_1)s_{2,k}(x_2),$$
where $\mathbf \pi= (\pi_1,\ldots,\pi_K)$ is the probability vector characterizing the distribution of $Z.$ For a compact interval $I,$ and $a,b>0,$ let us denote by $\beta(I;a,b)$ the Beta density with parameters $a,b$ shifted and rescaled to have support $I,$ and by $\mathcal U(I)$ the uniform density on $I.$ In our example, we take
\begin{itemize}
\item $K=4$ and $\pi=\left(3/5,1/10,1/40,11/40\right);$
\item $s_{1,1}=\beta\left([0,3/5];4,4\right)$ and $s_{2,1}=\beta\left([0,2/5];4,4\right);$
\item $s_{1,2}=\beta\left([2/5,1];100,100\right)$ and $s_{2,2}=\beta\left([2/5,1];20,20\right);$
\item $s_{1,3}=\mathcal U\left([0,1]\right)$ and $s_{2,3}=\mathcal U\left([0,1]\right);$
\item $s_{1,4}=\beta\left([3/5,1];8,4\right)$ and $s_{2,4}=\mathcal U\left([2/5,1]\right).$
\end{itemize}	
The resulting mixture density $s$ of $Y_i$ is shown in Figure~\ref{fig:densexample} $(b)$. We choose $L_\bullet=\left[n/((\log n)/2)^2\right]$ and first compute the least-squares estimator $\hat s^\star_\bullet$ of $s$ on the model $V^\star_{L_\bullet/2}\otimes\ldots \otimes V^\star_{L_\bullet/2},$ which provides the estimator $\hat R=\max\{\|\hat s^\star_\bullet\|,1\}$ for $\bar R.$ We then use the penalty 
$$\pen(m)=\sum_{\bm\lambda \in m} \frac{1.5 \hat \sigma^2_{\bm \lambda}+0.5 \hat R}{n}.$$
For a sample with size $n=2000,$ Figure~\ref{fig:densexample} illustrates how the procedure first selects a rough model $\hat m_{\hat\ell}$ (Figure~\ref{fig:densexample} (c))  and then add some details wherever needed (Figure~\ref{fig:densexample} (d)). Summing up the two yields the pyramid selection estimator $\tilde s^\mathcal P$ (Figure~\ref{fig:densexample} (e)). By way of comparison, we also represent in Figure~\ref{fig:densexample} (f) a widely used estimator: the bivariate Gaussian kernel estimator, with the "known support" option, implemented in MATLAB {\tt{ksdensity}} function. We observe that, contrary to the kernel density estimator, the pyramid selection estimator recovers indeed the main three modes, and in particular the sharp peak.

\begin{centering}
\begin{figure}[h]
\includegraphics[scale=0.2]{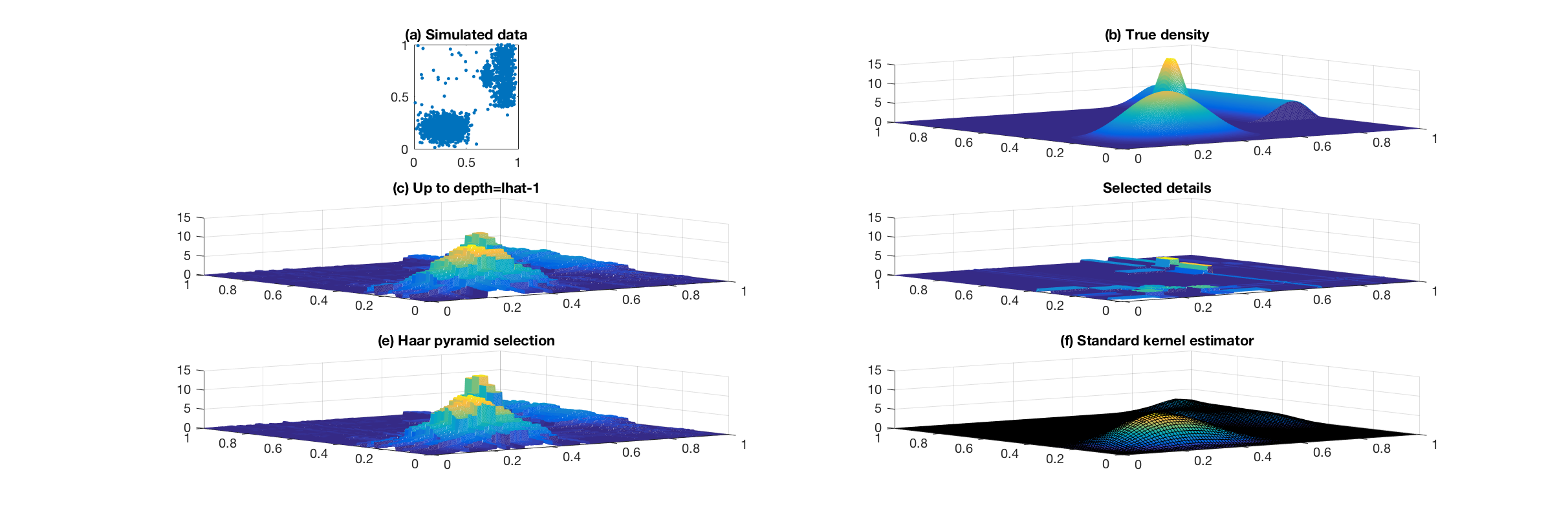}
\caption{Pyramid selection and standard kernel for an example of mixture of multiplicative densities.}
\label{fig:densexample}
\end{figure}
\end{centering} 

In the copula density estimation framework, we consider an example where the copula of $X_i=(X_{i1},X_{i2})$ is either a Frank copula or a Clayton copula conditionally to a  binary variable $Z.$ More precisely, we consider the mixture copula
$$s(x_1,x_2)=0.5 s_F(x_1,x_2) + 0.5 s_C(x_1,x_2)$$
where $s_F$ is the density of a Frank copula with parameter 4 and $s_C$ is the density of a Clayton copula with parameter 2. These two examples of Archimedean copula densities are shown in Figure~\ref{fig:FCcopula} and the resulting mixture in Figure~\ref{fig:copulaexample} (b). We use the same penalty as in the previous example, adapted of course to the copula density estimation framework. We illustrate in Figure~\ref{fig:copulaexample} the pyramid selection procedure on a sample with size $n=2000.$ Though not all theoretical conditions are fully satisfied here, the pyramid selection procedure still provides a reliable estimator.

\begin{figure}[h]
\includegraphics[scale=0.2]{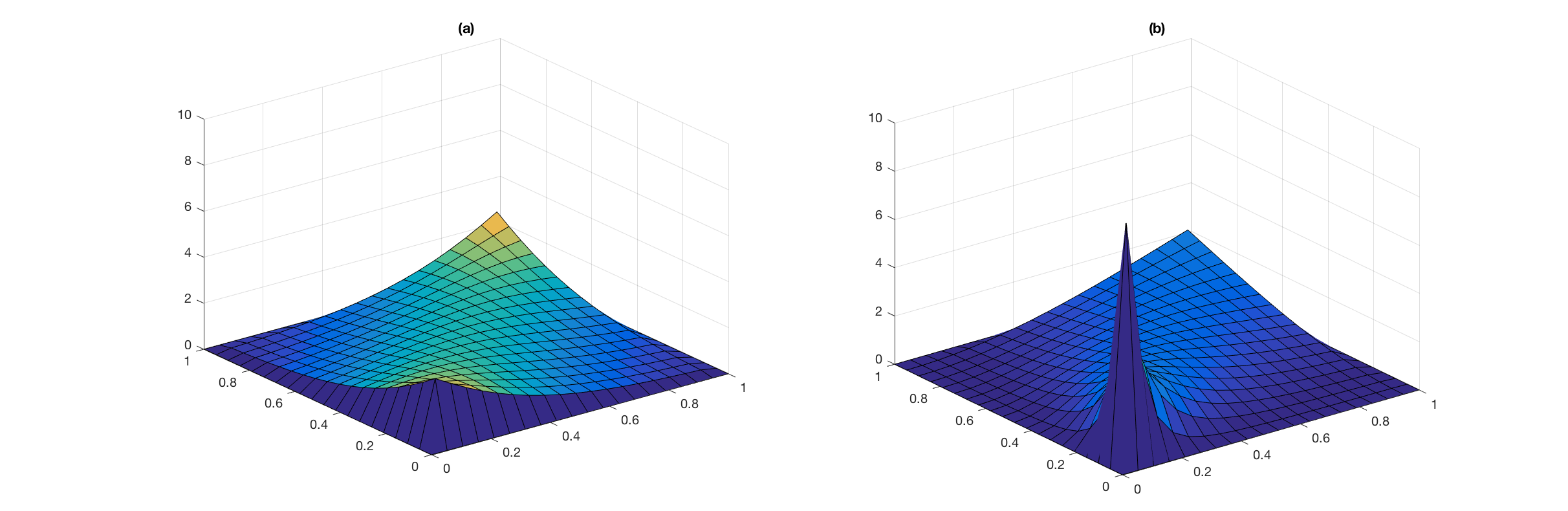}
\caption{Left: Frank copula density with parameter 4; Right: Clayton copula density with parameter 2.}
\label{fig:FCcopula}
\end{figure}

\begin{figure}[h]
\includegraphics[scale=0.2]{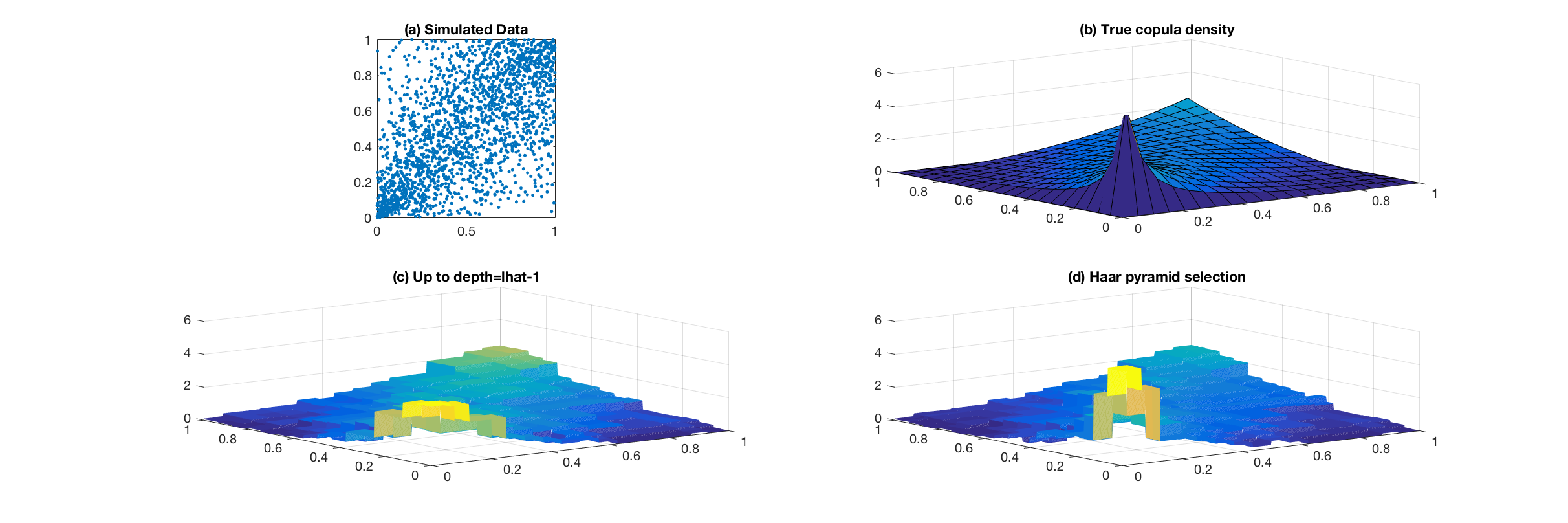}
\caption{Pyramid selection for an example of mixture copula density.}
\label{fig:copulaexample}
\end{figure}

As a conclusion, those examples suggest that the Haar pyramid selection already provides a useful new estimation procedure. This is most encouraging for pyramid selection based on higher order wavelets, whose full calibration based on an extensive simulation study in each framework will be the subject of another work. 

\section{Proofs}\label{sec:proofs}
We shall use repeatedly the classical inequality
\begin{equation}\label{eq:basic}
2ab \leq \theta a^2 + \frac1\theta b^2
\end{equation}
for all positive $\theta,a,b.$

\subsection{Proof of Proposition~\ref{prop:copulaone}}\label{sec:proofscopula}
We only have to prove~\ref{prop:copulaone} for $m_\bullet.$ Indeed, as any pyramidal model $m$ is a subset of $m_\bullet,$ a common upper-bound for the residual terms is 
$$ \|\check s^\star_m-\hat s^\star_m\|^2\leq \|\check s^\star_{m_\bullet}-\hat s^\star_{m_\bullet}\|^2.$$

Under Assumption (L), and thanks to assumptions $S.iii)$ and $W.v),$ we have that for all $\bm\lambda\in m_\bullet,$ $\bm x=(x_1,\ldots,x_d)$ and $\bm y=(y_1,\ldots,y_d) \in Q,$ 
$$|\Psi_{\bm\lambda}(\bm x)-\Psi_{\bm\lambda}(\bm y)| \leq \kappa^d 2^{3L_\bullet/2}\sum_{k=1}^d |x_k-y_k|.$$ 
According to Massart's version of Dworetzky-Kiefer-Wolfowitz inequality (see~\cite{MassartDKW}), for any positive $z,$ and $1\leq k\leq d,$ there exists some event $\Omega_k(z)$ on which $\|\hat F_{nk}-F_k\|_\infty \leq z/\sqrt{n}$ and such that $\P(\Omega^c_k(z))\leq 2\exp(-2z^2).$ Setting $\Omega(z)=\cap_{k=1}^d \Omega_k(z),$ we thus have for all $\bm\lambda\in m_\bullet$
$$|\check \beta_{\bm\lambda}- \widehat\beta_{\bm\lambda}| \leq 
\kappa^d 2^{3L_\bullet/2} d(z/\sqrt{n}) \BBone_{\Omega(z)} + \kappa^d 2^{3L_\bullet/2} d \BBone_{\Omega^c(z)},$$
hence 
$$ \E[\|\check s^\star_{m_\bullet}-\hat s^\star_{m_\bullet}\|^2] \leq \kappa^{2d} 2^{3L_\bullet} d^2(z^2/n + 2d\exp(-2z^2)) D_{m_\bullet}.$$
Finally, $D_{m_\bullet}$ is of order $L^{d-1}_\bullet 2^{L_\bullet}$ (see Proposition~\ref{prop:combinatorial}), so by choosing $2z^2=\log(n),$
$$ \E[\|\check s^\star_{m_\bullet}-\hat s^\star_{m_\bullet}\|^2] \leq C(\kappa,d)L^{d-1}_\bullet 2^{4L_\bullet} \log(n)/n.$$

\subsection{Proof of Proposition~\ref{prop:disclevyone}}\label{sec:proofslevydisc}
For all bounded measurable function $g,$ let us denote  $D_\Delta(g)=\E[g(X_\Delta)]/\Delta-\int_Q g s.$ For all $\bm\lambda\in \Lambda,$ 
\begin{equation}\label{eq:disclevycoeff}
\E\left[\left(\check  \beta_{\bm\lambda}-\widehat\beta_{\bm\lambda}\right)^2\right] 
\leq 4\left( \Var(\check  \beta_{\bm\lambda})+ \Var(\widehat\beta_{\bm\lambda})+ D^2_\Delta(\Psi_{\bm\lambda})\right)
\leq 8\frac{\|s\|_\infty}{n\Delta}+ 4\frac{D_\Delta(\Psi^2_{\bm\lambda})}{n\Delta}+ 4D^2_\Delta(\Psi_{\bm\lambda}).
\end{equation}

We shall bound $D_\Delta(\Psi_{\bm\lambda})$ by using the decomposition of a L\'evy process into a big jump compound Poisson process and an independent small jump L\'evy process. Let us fix $\varepsilon>0$ small enough so that $Q=\prod_{k=1}^d [a_k,b_k]\subset \{\|x\|>\varepsilon\}$ and denote by $(\Sigma,\bm{\mu},\nu)$ the characteristic L\'evy triplet of  $\mathbf X=(X_t)_{t\geq 0},$ where $\bm{\mu}$ stands for the drift and $\nu$ is the L\'evy measure, with density $f$ with respect to the Lebesgue measure on $\R^d$ (see Section~\ref{sec:examples}). Then $\mathbf X$ is distributed as $\mathbf X^\varepsilon +\tilde {\mathbf  X}^\varepsilon,$ where 
$\mathbf X^\varepsilon$ and $\tilde{\mathbf X}^\varepsilon$ are independent L\'evy processes  with following characteristics. First, $\mathbf X^\varepsilon$  is a L\'evy process with characteristic L\'evy triplet $(\Sigma,\bm{\mu}_\varepsilon,\nu_\epsilon),$ where the drift is 
$$\bm{\mu}_\varepsilon= \bm{\mu}-\int_{\varepsilon<\|x\|\leq 1} x\ f(x)\d x.$$
and the L\'evy measure is
$$\nu_\varepsilon (\d x)=\BBone_{\|x\|\leq \varepsilon} f(x) \d x.$$
The process $\tilde {\mathbf  X}^\varepsilon$ is the compound Poisson process
$$\tilde {X}_t^\varepsilon=\sum_{i=1}^{\tilde N_t} \xi_i,$$
where $\tilde N$ is a homogeneous Poisson process with intensity $\lambda_\varepsilon=\nu(\{\|x\|> \varepsilon\}),$  $(\xi_i)_{i\geq 1}$ are i.i.d. with density $\lambda_\varepsilon^{-1} \BBone_{\|x\|> \varepsilon}f(x),$ and $\tilde N$ and $(\xi_i)_{i\geq 1}$ are independent.

Conditioning by $\tilde N$ and using the aforementioned independence properties yields
$$\frac{\E[\Psi_{\bm\lambda}(X_\Delta)]}{\Delta}=e^{-\lambda_{\varepsilon} \Delta} \frac{\E[\Psi_{\bm\lambda}(X^\varepsilon_\Delta)]}{\Delta} +  \lambda_{\varepsilon}e^{-\lambda_{\varepsilon} \Delta} \E[\Psi_{\bm\lambda}(X^\varepsilon_\Delta+\xi_1)]+ \lambda^2_{\varepsilon} \Delta e^{-\lambda_{\varepsilon} \Delta} \sum_{j=0}^\infty\E\left[\Psi_{\bm\lambda}\left(X^\varepsilon_\Delta+\sum_{i=1}^{j+2}\xi_i\right)\right]\frac{(\lambda_{\varepsilon} \Delta)^j}{(j+2)!}.$$
Conditioning by $\xi_1$ and using independence between $X^\varepsilon_\Delta$ and $\xi_1$ then yields 
$$\lambda_{\varepsilon} \E[\Psi_{\bm\lambda}(X^\varepsilon_\Delta+\xi_1)]=\int_{\|x\|>\varepsilon} \E[\Psi_{\bm\lambda}(X^\varepsilon_\Delta+x)]f(x)\d x.$$
Writing $\langle \Psi_{\bm\lambda},s\rangle=  e^{-\lambda_{\varepsilon} \Delta} \langle \Psi_{\bm\lambda},s\rangle+ (1-e^{-\lambda_{\varepsilon} \Delta})\langle \Psi_{\bm\lambda},s\rangle$ and using $(1-e^{-\lambda_{\varepsilon} \Delta})\leq \lambda_{\varepsilon} \Delta$ leads to
$$|D_\Delta(\Psi_{\bm\lambda})| \leq R_\Delta^{(1)}(\Psi_{\bm\lambda}) +R_\Delta^{(2)}(\Psi_{\bm\lambda}) + R_\Delta^{(3)}(\Psi_{\bm\lambda}) +R_\Delta^{(4)}(\Psi_{\bm\lambda}),$$
where
$$ R_\Delta^{(1)}(\Psi_{\bm\lambda}) = e^{-\lambda_{\varepsilon} \Delta}\frac{\E[\Psi_{\bm\lambda}(X^\varepsilon_\Delta)]}{\Delta}, \quad
R_\Delta^{(2)}(\Psi_{\bm\lambda}) =  e^{-\lambda_{\varepsilon} \Delta}\int_{\|x\|>\varepsilon}  \left|\E\left[\Psi_{\bm\lambda}(X^\varepsilon_\Delta+x) -\Psi_{\bm\lambda}(x)\right]\right|f(x)\d x,$$
\begin{equation}\label{eq:R3R4}
R_\Delta^{(3)}(\Psi_{\bm\lambda})=\lambda_{\varepsilon} \Delta \|\Psi_{\bm\lambda}\|_1 \|s\|_\infty,\quad
R_\Delta^{(4)}(\Psi_{\bm\lambda})= \lambda_{\varepsilon}^2 \Delta \| \Psi_{\bm\lambda}\|_\infty.
\end{equation}

As $\Psi_{\bm\lambda}$ has compact support $Q,$
$$ R_\Delta^{(1)}(\Psi_{\bm\lambda}) \leq  e^{-\lambda_{\varepsilon} \Delta} \| \Psi_{\bm\lambda}\|_\infty \frac{\P(X^\varepsilon_\Delta \in Q)}{\Delta}.$$
Let us denote by $X^\varepsilon_{\Delta,k}$ the $k$-th coordinate of $X^\varepsilon_\Delta$ and by $d_Q$ the maximal distance from $[a_k,b_k]$ to 0, for $k=1,\ldots,d,$ reached for instance at $k=k_0.$ 
We deduce from the proof of Lemma 2 in~\cite{RW} (see also~\cite{FigueroaHoudre}, equation (3.3)) that  there exists $z_0=z_0(\varepsilon)$ such that if $\Delta <d_Q/z_0(\varepsilon),$
$$\P(X^\varepsilon_\Delta \in Q)\leq \P(|X^\varepsilon_{\Delta,k_0}| \geq  d_Q)\leq  \exp\left((z_0\log(z_0)+u-u\log(u))/(2\varepsilon)\right) \Delta^{d_Q/(2\varepsilon)}$$
so that 
\begin{equation}\label{eq:R1}
R_\Delta^{(1)}(\Psi_{\bm\lambda}) \leq  C(d_Q,\varepsilon) e^{-\lambda_{\varepsilon} \Delta} \| \Psi_{\bm\lambda}\|_\infty \Delta^{d_Q/(2\varepsilon)-1}.
\end{equation}

Under Assumption $(L),$ $\Psi_{\bm\lambda}$ is Lipschitz on $Q,$ so 
$$\left|\Psi_{\bm\lambda}(X^\varepsilon_\Delta+x) -\Psi_{\bm\lambda}(x)\right|\leq  \|\Psi_{\bm\lambda}\|_L \|X_\Delta^\varepsilon\|_1\BBone_{\{X_\Delta^\varepsilon +x \in Q\} \cap \{x \in Q\}} +|\Psi_{\bm\lambda}(x)| \BBone_{\{X_\Delta^\varepsilon +x \notin Q\} \cap\{x \in Q\}} + \|\Psi_{\bm\lambda}\|_\infty  \BBone_{\{X_\Delta^\varepsilon +x \in Q\} \cap\{x \notin Q\}}.
$$
Besides, as $Q$ is compact and bounded away from the origin, there exists $\delta_Q$ and $\rho_Q>0$ such that 
$$\{X_\Delta^\varepsilon +x \in Q\} \cap \{x \in Q\} \subset \{\|X_{\Delta}^\varepsilon\| \geq \delta_Q\} $$
$$(\{X_\Delta^\varepsilon +x \in Q\} \cap \{x \notin Q\})\cup  (\{X_\Delta^\varepsilon +x \notin Q\} \cap\{x \in Q\}) \subset \{\|X_{\Delta}^\varepsilon\| \geq \rho_Q\}$$ 
The L\'evy measure of $\mathbf X^\varepsilon$ is compactly supported and satisfies 
$$\int \|x\| ^2 \nu_\varepsilon(\d x) = \int_{\|x\| \leq \varepsilon} \|x\| ^2 \nu(\d x) $$
which is finite since $\nu$ is a L\'evy measure (see for instance~\cite{Sato}, Theorem 8.1). So we deduce from~\cite{Millar}, Theorem 2.1, that 
\begin{equation*}\label{eq:momsmalllevy}
\E\left[\|X_\Delta^\varepsilon\|^2\right]\leq C(d,f) \Delta,
\end{equation*}
hence 
$$\E\left[ \|X_\Delta^\varepsilon\|_1\BBone_{\|X_{\Delta}^\varepsilon\| \geq \delta_Q} \right] \leq 
C(d)\delta_Q^{-1} \E\left[ \|X_\Delta^\varepsilon\|^2\right] \leq C(d,f)\delta_Q^{-1} \Delta 
$$
and from Markov inequality
$$ \P(\|X^\varepsilon_\Delta\| \geq \rho_Q) \leq C(d,f) \rho_Q^{-2}\Delta.$$
Finally, fixing $0<\varepsilon<\min(d_Q/4, \inf_{x\in Q} \|x\|),$  we have for all $0<\Delta < \min(d_Q/z_0(\epsilon), 1)$
\begin{equation}\label{eq:R2}
R_\Delta^{(2)}(\Psi_{\bm\lambda}) \leq C(d,f) e^{-\lambda_{\varepsilon}\Delta} \Delta (\delta_Q^{-1} \lambda_\varepsilon \|\Psi_{\bm\lambda}\|_L  + \rho_Q^{-2}\|s\|_\infty \|\Psi_{\bm\lambda}\|_1+ \rho_Q^{-2} \lambda_\varepsilon \|\Psi_{\bm\lambda}\|_\infty ).
\end{equation}

For all $\bm\lambda\in m_\bullet,$ 
$$\max(\|\Psi_{\bm\lambda}\|_L, \|\Psi_{\bm\lambda}\|_\infty, \|\Psi_{\bm\lambda}\|_1) \leq  C(\kappa) 2^{3L_\bullet/2},\max(\|\Psi^2_{\bm\lambda}\|_L, \|\Psi^2_{\bm\lambda}\|_\infty, \|\Psi^2_{\bm\lambda}\|_1) \leq  C(\kappa) 2^{2L_\bullet},$$
so that combining~\eqref{eq:disclevycoeff},~\eqref{eq:R1},~\eqref{eq:R2} and~\eqref{eq:R3R4} yields
$$\E[\|\check s^\star_m-\hat s^\star_m\|^2]
\leq 8\frac{\|s\|_\infty D_m}{n\Delta}+ C(\kappa,d,f,Q,\varepsilon) L_\bullet^{d-1} \frac{2^{4L_\bullet}n\Delta^3+ 2^{3L_\bullet}\Delta }{n\Delta}.$$

%

\subsection{Proof of Proposition~\ref{prop:combinatorial}}\label{sec:proofcombinatorial}
Due to hypotheses $S.ii)$ and $W.ii),$ we have for all $j\geq j_0,$
$$2^{j-1}\leq \sharp \nabla_j \leq M 2^{j-1},$$
hence, for all $\bm j\in \N_{j_0}^d,$
\begin{equation*}\label{eq:cardnablauni}
(1/2)^d 2^{|\bm j|}\leq \sharp \bm\nabla_{\bm j} \leq (M/2)^d 2^{|\bm j|}.
\end{equation*}
Let us fix $\ell\in\{dj_0,\ldots, L_\bullet\}.$ The number of $d$-uples $\bm j\in\N_{j_0}^d$ such that $|\bm j|=\ell$ is equal to the number of partititions of the integer $\ell-dj_0$ into $d$ nonnegative integers, hence
\begin{equation*}\label{eq:cardpartint}
\sharp \bm J_\ell=\binom{\ell-dj_0+d-1}{d-1}=\prod_{k=1}^{d-1} \left(1+\frac{\ell-dj_0}{k}\right).
\end{equation*}
The last two displays and the classical upper-bound for binomial coefficient (see for instance~\cite{Massart}, Proposition 2.5) yield 
\begin{equation}\label{eq:cardbignabla}
c_0(d) (\ell-dj_0+d-1)^{d-1}2^\ell \leq \sharp U\bm\nabla(\ell) \leq c_1(M,d) (\ell-dj_0+d-1)^{d-1}2^\ell,
\end{equation}
where $c_0(d)=2^{-d}(d-1)^{-(d-1)}$ and $c_1(M,d)=(M/2)^d(e/(d-1))^{d-1}.$

Let us now fix $\ell_1\in\{dj_0+1,\ldots, L_\bullet +1\}.$  Any model $m\in\mathcal M^\mathcal P_{\ell_1}$ satisfies
$$D_m
= \sum_{\ell=dj_0}^{\ell_1-1} \sharp U\bm\nabla(\ell) + \sum_{k=0}^{L_\bullet-\ell_1}N(\ell_1,k).$$
So we obviously have 
$$D_m  \geq \sharp U\bm\nabla(\ell_1-1) \geq \kappa_1(d)(\ell_1-dj_0+d-2)^{d-1}2^{\ell_1},$$
with $\kappa_1(d)=c_0(d)/2=2^{-(d+1)}(d-1)^{-(d-1)}.$ 
Besides, with our choice of $N(\ell_1,k),$ 
\begin{equation*}
D_m \leq c_1(M,d) (\ell_1-dj_0+d-2)^{d-1} \sum_{\ell=dj_0}^{\ell_1-1}2^\ell + 2M^{-d}c_1(M,d) s_1(d) (\ell_1-dj_0+d-2)^{d-1}2^{\ell_1},
\end{equation*}
so that Proposition~\ref{prop:combinatorial} holds with  $\kappa_2(d,j_0,B) = c_1(M,d) (1+2M^{-d}c_1(M,d) s_1(d)),$ where
$$s_1(d)=\sum_{k=0}^\infty \frac{(1+k/(d-1))^{d-1}} {(2+k)^{d+2}}.$$

The number of subsets of $\Lambda$ in $\mathcal M^\mathcal P_{\ell_1}$ satisfies
\begin{equation*}
\sharp\mathcal M^\mathcal P_{\ell_1}
= \prod_{k=0}^{L_\bullet-\ell_1} \binom{\sharp U\bm{\nabla}(\ell_1+k)}{N(\ell_1,k)}
\leq \prod_{k=0}^{L_\bullet-\ell_1} \left(\frac{e\: \sharp U\bm{\nabla}(\ell_1+k)}{N(\ell_1,k)}\right)^{N(\ell_1,k)}.
\end{equation*}
For $k\in\{0,\ldots,L_\bullet-\ell_1\},$ let $f(k)=(k+2)^{d+2}2^k M^d/2,$  then $N(\ell_1,k)\leq \sharp U\bm{\nabla}(\ell_1+k)/f(k).$ As the function $x\in [0,U]\mapsto x \log(e U/x)$ is increasing, we deduce 
\begin{equation*}
\log (\sharp \mathcal M^\mathcal P_{\ell_1}) 
\leq  D(\ell_1)  \sum_{k=0}^{L_\bullet-\ell_1} \frac{\sharp U\bm\nabla(\ell_1+k)}{\sharp U\bm\nabla(\ell_1-1)} \frac{1+\log(f(k))}{f(k)}.
\end{equation*}
Setting 
$$s_2=\sum_{k=0}^\infty\frac{1}{(k+2)^3},\quad s_3=\sum_{k=0}^\infty\frac{\log(k+2)}{(k+2)^3}, s_4 = \sum_{k=0}^\infty\frac{1}{(k+2)^2},$$ one may take for instance
$\kappa_3(j_0,B,d)=(\log(e/2)+d\log(M))s_2 + (d+2) s_3+\log(2) s_4$ in Proposition~\ref{prop:combinatorial}.

\subsection{Proof of Theorem~\ref{theo:choosepen}}\label{sec:proofpen}
\subsubsection{Notation and preliminary results}

Hyperbolic wavelet bases inherit from the underlying univariate wavelet bases a localization property which can be stated as follows.
\begin{lemm}\label{lemm:localization} Let $\underline D(L_\bullet)=\left(e(L_\bullet-dj_0+d-1)/(d-1)\right)^{d-1} 2^{L_\bullet/2},$ then for all real-valued sequence $(a_{\bm\lambda})_{\bm\lambda\in m_\bullet},$
$$\max\left\{\left\|\sum_{\bm\lambda\in m_\bullet} a_{\bm\lambda} \Psi_{\bm\lambda}\right\|_\infty,\left\|\sum_{\bm\lambda\in m_\bullet} a_{\bm\lambda} \Psi^\star_{\bm\lambda}\right\|_\infty\right\} \leq \kappa'_7 \max_{{\bm\lambda}\in m_\bullet} |a_{\bm\lambda}|  \underline D(L_\bullet),$$
where $\kappa'_7=\kappa^{2d} (2+\sqrt{2})$ for instance.
\end{lemm}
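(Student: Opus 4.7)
My plan is to fix an arbitrary point $\mathbf x\in[0,1]^d$, bound $|\sum_{\bm\lambda\in m_\bullet} a_{\bm\lambda} \Psi_{\bm\lambda}(\mathbf x)|$ (and the analogous quantity for $\Psi^\star_{\bm\lambda}$) by peeling off the sum level by level, exploiting the fact that at each hyperbolic resolution level only a bounded number of wavelets "see" a given point. Since the argument is identical in the primal and dual cases (both families satisfy $S.vi)$, $S.viii)$, $W.iv)$, $W.vi)$), I will only treat the $\Psi_{\bm\lambda}$.

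First, for each $\bm j=(j_1,\ldots,j_d)\in\N_{j_0}^d$, the tensor product form $\Psi_{\bm\lambda}(\mathbf x)=\prod_{k=1}^d \psi_{\lambda_k}(x_k)$ together with the univariate almost disjoint supports property (assumptions $S.vii)$ and $W.v)$) implies that the set $\{\bm\lambda\in\bm\nabla_{\bm j} : \Psi_{\bm\lambda}(\mathbf x)\neq 0\}$ has cardinality at most $\kappa^d$, since at each coordinate $k$ at most $\kappa$ indices $\lambda_k\in\nabla_{j_k}$ satisfy $x_k\in\mathrm{Supp}(\psi_{\lambda_k})$. Similarly, from the univariate norm bound (assumptions $S.viii)$ and $W.vi)$), one has $\|\Psi_{\bm\lambda}\|_\infty\leq \prod_{k=1}^d \kappa 2^{j_k/2}=\kappa^d 2^{|\bm j|/2}$.

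Combining these two bounds and grouping the indices $\bm j$ according to $|\bm j|=\ell$ yields
\begin{align*}
\left|\sum_{\bm\lambda\in m_\bullet} a_{\bm\lambda}\Psi_{\bm\lambda}(\mathbf x)\right|
&\leq \max_{\bm\lambda\in m_\bullet}|a_{\bm\lambda}|\sum_{\ell=dj_0}^{L_\bullet}\sum_{\bm j\in\bm J_\ell}\sharp\{\bm\lambda\in\bm\nabla_{\bm j}:\Psi_{\bm\lambda}(\mathbf x)\neq 0\}\,\|\Psi_{\bm\lambda}\|_\infty\\
&\leq \kappa^{2d}\max_{\bm\lambda\in m_\bullet}|a_{\bm\lambda}|\sum_{\ell=dj_0}^{L_\bullet}\sharp\bm J_\ell\,2^{\ell/2}.
\end{align*}
Using the classical bound $\sharp\bm J_\ell=\binom{\ell-dj_0+d-1}{d-1}\leq \bigl(e(\ell-dj_0+d-1)/(d-1)\bigr)^{d-1}$ already invoked in the proof of Proposition~\ref{prop:combinatorial}, the factor in $\ell$ is monotone in $\ell$, so pulling out its value at $\ell=L_\bullet$ gives
$$\sum_{\ell=dj_0}^{L_\bullet}\sharp\bm J_\ell\,2^{\ell/2}\leq \left(\frac{e(L_\bullet-dj_0+d-1)}{d-1}\right)^{d-1}\sum_{\ell=dj_0}^{L_\bullet} 2^{\ell/2}\leq (2+\sqrt 2)\,\underline D(L_\bullet),$$
where the geometric sum is evaluated via $\sum_{\ell=0}^{L_\bullet}2^{\ell/2}\leq 2^{L_\bullet/2}\sqrt 2/(\sqrt 2-1)=(2+\sqrt 2)2^{L_\bullet/2}$.

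Putting the pieces together, the bound holds pointwise with constant $\kappa'_7=\kappa^{2d}(2+\sqrt 2)$ independent of $\mathbf x$, so it also holds for the supremum norm. The argument for $\Psi^\star_{\bm\lambda}$ is identical. The calculation is essentially routine; the only subtle point is recognizing that the hyperbolic structure costs at most $\sharp\bm J_\ell=\mathcal O(\ell^{d-1})$ compared with the univariate situation, which is precisely the polynomial factor appearing in $\underline D(L_\bullet)$.
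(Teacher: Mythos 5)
Your proof is correct and follows essentially the same route as the paper's: a pointwise bound using the almost disjoint supports and sup-norm assumptions to get a contribution $\kappa^{2d}2^{\ell/2}$ per level $\bm j$ with $|\bm j|=\ell$, then the binomial bound $\sharp\bm J_\ell\leq\left(e(L_\bullet-dj_0+d-1)/(d-1)\right)^{d-1}$ from the proof of Proposition~\ref{prop:combinatorial} and the geometric sum giving the factor $2+\sqrt 2$. The only quibble is notational (the dangling $\|\Psi_{\bm\lambda}\|_\infty$ outside the cardinality in your middle display should be a maximum over $\bm\lambda\in\bm\nabla_{\bm j}$ of $|\Psi_{\bm\lambda}(\mathbf x)|$), which does not affect the argument.
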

\begin{proof} For all $\mathbf x=(x_1,\ldots,x_d)\in[0,1]^d,$ using assumptions $S.vi), S.vii), S.viii),W.iv), W.v), W. vi)$ in Section~\ref{sec:uniwave_assumptions}, we get
\begin{align*}
\left|\sum_{\bm\lambda\in m_\bullet} a_{\bm\lambda} \Psi_{\bm\lambda}\right|
&\leq \max_{\bm\lambda\in m_\bullet} \left| a_{\bm\lambda} \right| \sum_{\ell=dj_0}^{L_\bullet} \sum_{\bm j \in \bm J_\ell} \prod_{k=1}^d \left(\sum_{\lambda_k\in\nabla_{j_k}}|\psi_{\lambda_k}(x_k)|\right)\\
&\leq \kappa^{2d} \max_{\bm\lambda\in m_\bullet} \left| a_{\bm\lambda} \right| \sum_{\ell=dj_0}^{L_\bullet} \sum_{\bm j \in \bm J_\ell} 2^{\ell/2}.
\end{align*}
We deduce from the proof of Proposition~\ref{prop:combinatorial} the upper-bound
$\sharp\bm J_\ell\leq \left(e(L_\bullet-dj_0+d-1)/(d-1)\right)^{d-1}$ which allows to conclude.
\end{proof}

For all $t\in\L_2([0,1]^d),$  we define
$$\nu(t)=\sum_{\bm\lambda\in\Lambda}\langle t,\Psi_{\bm\lambda} \rangle (\check \beta_{\bm\lambda}-\langle s,\Psi_{\bm\lambda} \rangle),\quad
\nu_R(t)=\sum_{\bm\lambda\in\Lambda}\langle t,\Psi_{\bm\lambda} \rangle (\hat \beta_{\bm\lambda}-\check \beta_{\bm\lambda}),\quad
\hat\nu(t)=\nu(t)+\nu_R(t),$$
and for all $m\in\mathcal M^{\mathcal P},$ we set 
$$\chi(m)=\sup_{t\in S^\star_m|\|t\|_{\bm\Psi}=1} \nu(t), \quad \chi_R(m)=\sup_{t\in S^\star_m|\|t\|_{\bm\Psi}=1} \nu_R(t).$$

\begin{lemm}\label{lemm:chi2exp}
For all $m\in\mathcal M^{\mathcal P},$ let $t^\star_m= \sum_{\bm\lambda\in m}(\nu(\Psi^\star_{\bm\lambda})/\chi(m)) \Psi^\star_{\bm\lambda},$ then 
$$\chi(m)=\sqrt{\sum_{\bm\lambda\in m}\nu^2(\Psi^\star_{\bm\lambda})}=\|s^\star_m-\check s^\star_m\|_{\bm\Psi}=\nu(t^\star_m),$$
$$\chi_R(m)=\sqrt{\sum_{\bm\lambda\in m}\nu_R^2(\Psi^\star_{\bm\lambda})}=\|\check s^\star_m-\hat s^\star_m\|_{\bm\Psi}.$$
\end{lemm}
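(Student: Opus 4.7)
The plan is to exploit the biorthogonality of the hyperbolic basis to turn the supremum defining $\chi(m)$ into a Cauchy--Schwarz optimisation over the coefficients in the dual basis, and then recognise the extremiser.

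First I would parametrise any $t \in S^\star_m$ as $t = \sum_{\bm\mu \in m} \alpha_{\bm\mu} \Psi^\star_{\bm\mu}$. By biorthogonality of $\{\Psi_{\bm\lambda}\}$ and $\{\Psi^\star_{\bm\lambda}\}$, $\langle t, \Psi_{\bm\lambda}\rangle = \alpha_{\bm\lambda}$ when $\bm\lambda \in m$ and $0$ otherwise, so $\|t\|_{\bm\Psi}^2 = \sum_{\bm\lambda \in m} \alpha_{\bm\lambda}^2$ by the definition~\eqref{eq:normbiortho}. The same biorthogonality gives $\nu(\Psi^\star_{\bm\lambda}) = \check\beta_{\bm\lambda} - \beta_{\bm\lambda}$ and
\[
\nu(t) = \sum_{\bm\lambda \in m} \alpha_{\bm\lambda}\bigl(\check\beta_{\bm\lambda} - \beta_{\bm\lambda}\bigr) = \sum_{\bm\lambda \in m}\alpha_{\bm\lambda}\,\nu(\Psi^\star_{\bm\lambda}).
\]

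Next I would apply Cauchy--Schwarz to obtain $\nu(t) \leq \|t\|_{\bm\Psi} \sqrt{\sum_{\bm\lambda \in m} \nu^2(\Psi^\star_{\bm\lambda})}$, with equality when the coefficients $\alpha_{\bm\lambda}$ are proportional to $\nu(\Psi^\star_{\bm\lambda})$. Taking the sup over the unit ball of $S^\star_m$ in $\|\cdot\|_{\bm\Psi}$ yields $\chi(m) = \sqrt{\sum_{\bm\lambda \in m} \nu^2(\Psi^\star_{\bm\lambda})}$, attained exactly at $t^\star_m$, which gives $\nu(t^\star_m) = \chi(m)$. For the identification with $\|s^\star_m - \check s^\star_m\|_{\bm\Psi}$, I would use
\[
s^\star_m - \check s^\star_m = \sum_{\bm\lambda \in m}(\beta_{\bm\lambda} - \check\beta_{\bm\lambda})\,\Psi^\star_{\bm\lambda} = -\sum_{\bm\lambda \in m}\nu(\Psi^\star_{\bm\lambda})\,\Psi^\star_{\bm\lambda},
\]
and apply~\eqref{eq:normbiortho} together with the biorthogonality computation of the previous step to recover $\|s^\star_m - \check s^\star_m\|_{\bm\Psi}^2 = \sum_{\bm\lambda \in m} \nu^2(\Psi^\star_{\bm\lambda})$.

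Finally, the statement for $\chi_R(m)$ is proved word for word identically: replace $\check\beta_{\bm\lambda} - \beta_{\bm\lambda}$ by $\hat\beta_{\bm\lambda} - \check\beta_{\bm\lambda}$ throughout, observe $\nu_R(\Psi^\star_{\bm\lambda}) = \hat\beta_{\bm\lambda} - \check\beta_{\bm\lambda}$, and use $\check s^\star_m - \hat s^\star_m = -\sum_{\bm\lambda \in m}\nu_R(\Psi^\star_{\bm\lambda})\Psi^\star_{\bm\lambda}$. There is no genuine obstacle; the only subtle point is to be careful that~\eqref{eq:normbiortho} sums over the whole index set $\Lambda$, which is precisely why biorthogonality is needed to collapse the sum to indices in $m$.
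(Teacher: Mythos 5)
Your proof is correct and follows essentially the same route as the paper, which simply invokes the linearity of $\nu$ and $\nu_R$ together with the Cauchy--Schwarz inequality; your write-up just makes the biorthogonality bookkeeping explicit. The only (harmless) point left implicit is the degenerate case $\chi(m)=0$, where $t^\star_m$ is not defined but all the stated identities hold trivially.
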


\begin{proof} The proof follows from the linearity of $\nu$ and $\nu_R$ and Cauchy-Schwarz inequality.
\end{proof}

\begin{lemm}\label{lemm:chitrunc}Let $\epsilon=\kappa'_2\|s\|_\infty/(\kappa'_3\kappa'_7\underline D(L_\bullet))$ and 
$$\Omega_T=\cap_{\bm \lambda\in m_\bullet}\left\{|\nu(\Psi^\star_{\bm\lambda})| \leq \epsilon \right\}.$$
For all $x>0,$ there exists a measurable event $\Omega_m(x)$ on which
$$\chi^2(m) \BBone_{\Omega_T\cap \Omega_\sigma} \leq 2 \kappa'^2_1\kappa'_5 \sum_{\bm\lambda \in m} \frac{\max\{\hat\sigma^2_{\bm\lambda},1\}}{\bar n} + 8 \kappa'_2 \|s\|_\infty\frac{x}{\bar n}.$$
and such that $\P(\Omega^c_m(x))\leq \exp(-x).$
\end{lemm}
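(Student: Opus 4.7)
The plan is to apply Assumption \textbf{(Conc)} to the unit $\bm\Psi$-sphere of $S^\star_m$ and then convert the resulting Talagrand-type bound into the required squared form by combining Assumption \textbf{(Var)} on $\Omega_\sigma$ with the deterministic control that the truncation event $\Omega_T$ provides.

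First I identify $\chi(m)$ with $\mathcal Z(\mathcal T_m)$. Let $\mathcal T_m$ denote a countable dense subset of $\{t\in S^\star_m:\|t\|_{\bm\Psi}=1\}$. For such $t$, biorthogonality forces $\langle t,\Psi_{\bm\lambda}\rangle=0$ whenever $\bm\lambda\notin m$, so the function $f_t=\sum_{\bm\lambda\in m_\bullet}\langle t,\Psi_{\bm\lambda}\rangle\Psi_{\bm\lambda}$ collapses to a sum over $m$ and $\int_Q f_t\,\mathrm d M-\E[\int_Q f_t\,\mathrm d M]=\nu(t)$; hence $\mathcal Z(\mathcal T_m)=\chi(m)$ by Lemma~\ref{lemm:chi2exp}. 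Normalization $\|t\|_{\bm\Psi}=1$ forces $|\langle t,\Psi_{\bm\lambda}\rangle|\leq 1$ for every $\bm\lambda\in m$, so Lemma~\ref{lemm:localization} supplies the uniform bound $B(\mathcal T_m)\leq\kappa'_7\underline D(L_\bullet)$.

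Applying Assumption \textbf{(Conc)} with this value of $B(\mathcal T_m)$ produces an event $\Omega_m(x)$ with $\P(\Omega_m(x)^c)\leq e^{-x}$ on which
$$\chi(m)\leq\kappa'_1\,\E[\chi(m)]+\sqrt{\kappa'_2\|s\|_\infty x/\bar n}+\kappa'_3\kappa'_7\underline D(L_\bullet)\,x/\bar n.$$
Since $\nu(\Psi^\star_{\bm\lambda})=\check\beta_{\bm\lambda}-\E[\check\beta_{\bm\lambda}]$ (biorthogonality again), Lemma~\ref{lemm:chi2exp} combined with Jensen's inequality yields $\E[\chi(m)]^2\leq\sum_{\bm\lambda\in m}\Var(\check\beta_{\bm\lambda})$, and Assumption \textbf{(Var)} upgrades this on $\Omega_\sigma$ to $\E[\chi(m)]^2\leq\kappa'_5 V$, where $V:=\sum_{\bm\lambda\in m}\max\{\hat\sigma^2_{\bm\lambda},1\}/\bar n$.

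The crux is to eliminate the deterministic residual $c:=\kappa'_3\kappa'_7\underline D(L_\bullet)\,x/\bar n$ by using $\Omega_T$. By the very definition of $\epsilon$ we have $c=(\kappa'_2\|s\|_\infty x/\bar n)/\epsilon$, so whenever $\kappa'_2\|s\|_\infty x/\bar n\leq\epsilon^2$ the bound $c^2\leq\kappa'_2\|s\|_\infty x/\bar n$ is automatic. In the complementary regime, Cauchy--Schwarz applied to Lemma~\ref{lemm:chi2exp} combined with the very definition of $\Omega_T$ yields $\chi^2(m)\BBone_{\Omega_T}\leq\epsilon^2 D_m$, which merges with the target right-hand side thanks to the lower bound $V\geq D_m/\bar n$. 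Squaring the Talagrand estimate via $(a+b+c)^2\leq 2a^2+4b^2+4c^2$ and assembling these pieces gives
$$\chi^2(m)\BBone_{\Omega_T\cap\Omega_\sigma}\leq 2\kappa'^2_1\kappa'_5 V+8\kappa'_2\|s\|_\infty x/\bar n$$
on $\Omega_m(x)$. The main technical obstacle is precisely this case analysis: showing that the sub-exponential tail produced by Talagrand-type concentration can be absorbed, uniformly in $x>0$, by the combination of the deterministic truncation $\Omega_T$ and the variance proxy $V$, which is exactly what motivates the specific choice of $\epsilon$ in the statement.
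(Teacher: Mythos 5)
Your first regime is sound: applying Assumption \textbf{(Conc)} to a countable dense subset of the whole unit sphere of $S^\star_m$ (coefficients bounded by $1$, hence $B(\mathcal T_m)\leq \kappa'_7\underline D(L_\bullet)$ by Lemma~\ref{lemm:localization}), squaring, and invoking Assumption \textbf{(Var)} on $\Omega_\sigma$ does give the stated bound whenever $\kappa'_2\|s\|_\infty x/\bar n\leq \epsilon^2$, since then $c^2\leq \kappa'_2\|s\|_\infty x/\bar n$. The gap is in your complementary regime. There you abandon concentration and use only the deterministic estimate $\chi^2(m)\BBone_{\Omega_T}\leq \epsilon^2 D_m$, claiming it merges with the right-hand side because $V\geq D_m/\bar n$. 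That absorption would require $\epsilon^2\leq 2\kappa'^2_1\kappa'_5/\bar n$ (or $D_m$ bounded), i.e.\ $\underline D(L_\bullet)\gtrsim \sqrt{\bar n}\,\|s\|_\infty$, and nothing in the lemma or the assumptions provides this: $\epsilon$ is of order $\|s\|_\infty L_\bullet^{-(d-1)}2^{-L_\bullet/2}$, and in several applications of the paper $2^{L_\bullet}$ is a small power of $\bar n$ (e.g.\ $2^{L_\bullet}\asymp n^{1/8}$ in Corollary~\ref{corol:copula}, $(n\Delta)^{1/4}$ in Corollary~\ref{corol:levydensitydisc}). Taking $x$ just above $\bar n\epsilon^2/(\kappa'_2\|s\|_\infty)$ and $D_m$ large, $\epsilon^2 D_m$ exceeds both $2\kappa'^2_1\kappa'_5 D_m/\bar n$ and $8\kappa'_2\|s\|_\infty x/\bar n\approx 8\epsilon^2$ by polynomial factors, so the second case does not close as written.

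The missing device, which is how the paper exploits $\epsilon$ and $\Omega_T$ uniformly in $x$, is to restrict the class \emph{before} applying concentration rather than to split on the size of $x$: set $z=\sqrt{\kappa'_2\|s\|_\infty x/\bar n}$ and apply Assumption \textbf{(Conc)} to a countable dense subset $\mathcal T'_m$ of $\left\{t\in S^\star_m:\|t\|_{\bm\Psi}=1,\ \max_{\bm\lambda\in m}|\langle t,\Psi_{\bm\lambda}\rangle|\leq \epsilon/z\right\}$. Lemma~\ref{lemm:localization} then gives $B(\mathcal T'_m)\leq \kappa'_7\underline D(L_\bullet)\,\epsilon/z$, and by the very definition of $\epsilon$ the linear term of \textbf{(Conc)} equals $\kappa'_3 B(\mathcal T'_m)x/\bar n= z$, i.e.\ it is absorbed into the Gaussian term for \emph{every} $x$. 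The case split is then on $\chi(m)$, not on $x$: on $\Omega_T\cap\{\chi(m)\geq z\}$ the maximizer $t^\star_m$ of Lemma~\ref{lemm:chi2exp} has coefficients $|\nu(\Psi^\star_{\bm\lambda})|/\chi(m)\leq \epsilon/z$, hence $\chi(m)=\mathcal Z(\mathcal T'_m)$ and the restricted concentration bound applies, with $\E^2[\mathcal Z(\mathcal T'_m)]\leq \E[\chi^2(m)]=\sum_{\bm\lambda\in m}\Var(\check\beta_{\bm\lambda})$ controlled via \textbf{(Var)} on $\Omega_\sigma$; on $\{\chi(m)<z\}$ one has trivially $\chi^2(m)<\kappa'_2\|s\|_\infty x/\bar n$. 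This yields the stated constants with no extra condition on $L_\bullet$, which your argument cannot avoid.
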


\begin{proof} We observe that $\chi(m)=\mathcal Z (\mathcal T_m)$ where $\mathcal T_m=\{t\in S^\star_m | \|t\|_{\bm\Psi} =1\}.$ Let us set $z=\sqrt{\kappa'_2\|s\|_\infty x/\bar n}$ and consider a countable and dense subset $\mathcal T'_m$ of $\{t\in S^\star_m | \|t\|_{\bm\Psi} =1, \max_{\bm\lambda\in m} |\langle t,\Psi_{\bm\lambda}\rangle|\leq \epsilon/z\}.$ Thanks to the localization property in Lemma~\ref{lemm:localization},
$$\sup_{t\in\mathcal T'_m}\left\|\sum_{\lambda \in m_\bullet} \langle t, \Psi_{\bm\lambda} \rangle \Psi_{\bm \lambda}\right \|_\infty  \leq \kappa'_3\frac{\sqrt{\kappa'_2 \|s\|_\infty}}{\sqrt{x/\bar n}}.$$
So Assumption \textbf{(Conc)} ensures that there exists $\Omega_m(x)$  such that $\P(\Omega^c_m(x))\leq \exp(-x)$ and on which
$$\mathcal Z(\mathcal T'_m) \leq \kappa'_1 \E\left[\mathcal Z(\mathcal T'_m)\right] + 2\sqrt{\kappa'_2 \|s\|_\infty \frac{x}{\bar n}},$$
hence
$$\mathcal Z^2(\mathcal T'_m) \leq 2\kappa'^2_1 \E^2\left[\mathcal Z(\mathcal T'_m)\right] + 8\kappa'_2 \|s\|_\infty \frac{x}{\bar n}.$$
As $Z(\mathcal T'_m)\leq \chi(m),$ we obtain by convexity and Lemma~\ref{lemm:chi2exp}
$$\E^2\left[\mathcal Z(\mathcal T'_m)\right]\leq \E\left[\chi^2(m)\right] =\sum_{\bm\lambda \in m} \Var(\check \beta_{\bm\lambda}).$$
On $\Omega_T\cap \{\chi(m) \geq z\},$ $t^\star_m$ given by Lemma~\ref{lemm:chi2exp} satisfies $\sup_{\bm\lambda\in m}|\langle t^\star_m, \Psi_{\bm\lambda}\rangle| \leq \epsilon/z,$ so that $\chi^2(m)=\mathcal Z^2(\mathcal T'_m),$ while on $\Omega_T\cap \{\chi(m) < z\},$ $\chi^2(m) < \kappa'_2\|s\|_\infty x/\bar n.$ The proof then follows from Assumption \textbf{(Var)}.
\end{proof}

\subsubsection{Proof of Theorem~\ref{theo:choosepen}}

Let us fix $m\in\mathcal M^{\mathcal P}.$ From the definition of $\hat m^{\mathcal P}$ and of $\hat s^\star_m,$ we get 
$$\hat\gamma(\tilde s^{\mathcal P})+\pen(\hat m^{\mathcal P})\leq  \hat\gamma(s^\star_m)+\pen(m).$$
For all $t,u\in\L_2([0,1]^d),$ 
$$\hat\gamma(t)-\hat\gamma(u)=\|t-s\|^2_{\bm \Psi}-\|u-s\|^2_{\bm \Psi}-2\hat \nu(t-u),$$
so
$$\|s-\tilde s^{\mathcal P}\|^2_{\bm \Psi}\leq \|s-s^\star_m\|^2_{\bm \Psi}+2\hat \nu(\tilde s^{\mathcal P}-s^\star_m)+ \pen(m)-\pen(\hat m^{\mathcal P}).$$
Using the triangle inequality and Inequality~\eqref{eq:basic} with $\theta=1/4$ and $\theta=1,$ we get
\begin{align*}
2\hat \nu(\tilde s^{\mathcal P}-s^\star_m)
&\leq 2\|\tilde s^{\mathcal P}-s^\star_m\|_{\bm \Psi}(\chi(m\cup\hat m^{\mathcal P})+\chi_R(m\cup\hat m^{\mathcal P}))\\
&\leq \frac{1}{2} \|s-\tilde s^{\mathcal P}\|^2_{\bm \Psi} + \frac{1}{2} \|s-s^\star_m\|^2_{\bm \Psi} + 8 \chi^2(m\cup\hat m^{\mathcal P}) + 8\chi^2_R(m\cup\hat m^{\mathcal P}),
\end{align*}
hence 
\begin{equation}\label{eq:step1}
\|s-\tilde s^{\mathcal P}\|^2_{\bm \Psi}\leq 3\|s-s^\star_m\|^2_{\bm \Psi}+16\chi^2(m\cup\hat m^{\mathcal P})+2( \pen(m)-\pen(\hat m^{\mathcal P})) + 16\chi^2_R(m\cup\hat m^{\mathcal P}).
\end{equation}

Let us fix $\zeta>0$ and set $\omega=\kappa_3(j_0,B,d)+\log(2)$ and $\Omega_\star(\zeta)=\cap_{m'\in\mathcal M^{\mathcal P}} \Omega_{m\cup m'}(\zeta + \omega D_m).$ We deduce from Lemma~\ref{lemm:chitrunc} that on $\Omega_\star(\zeta)$
\begin{equation}\label{eq:step2}
\chi^2(m\cup \hat m^{\mathcal P}) \BBone_{\Omega_T\cap \Omega_\sigma} \leq 2 \kappa'^2_1\kappa'_5  \sum_{\bm\lambda \in m\cup \hat m^{\mathcal P}}\frac{\max\{\hat\sigma^2_{\bm\lambda},1\}}{\bar n}+ 8 \kappa'_2 \|s\|_\infty\frac{\omega (D_m+D_{\hat m^{\mathcal P}})}{\bar n}+ 8 \kappa'_2 \|s\|_\infty\frac{\zeta}{\bar n}.
\end{equation}
Besides, given Proposition~\ref{prop:combinatorial}, our choice of $\omega$ leads to
$$ \P(\Omega^c_\star(\zeta)) \leq e^{-\zeta} \sum_{\ell=dj_0+1}^{L_\bullet+1} \exp\left(-D(\ell)\left(\omega-\frac{\log(\sharp \mathcal M_\ell^{\mathcal P})}{D(\ell)}\right) \right)\leq e^{-\zeta}.$$
Choosing for instance 
$$\pen(m)=c_1\sum_{\bm\lambda \in m} \frac{\hat\sigma^2_{\bm\lambda}}{\bar n} + c_2\frac{\bar R D_m}{\bar n},$$
with $c_1\geq 16 \kappa'^2_1\kappa'_5$ and $c_2\geq 64 \kappa'_2\omega+8\kappa'_6$ and integrating with respect to $\zeta>0,$ we deduce from~\eqref{eq:step1},~\eqref{eq:step2}, Assumption \textbf{(Var)} and Assumption \textbf{(Conc)} that
\begin{equation}\label{eq:step3}
\E\left[\|s-\tilde s^{\mathcal P}\|^2_{\bm \Psi} \BBone_{\Omega_T\cap \Omega_\sigma} \right] \leq 3 \|s-s^\star_m\|^2_{\bm \Psi}+ C \frac{\bar R D_m}{\bar n}
+ 64\kappa'_2\frac{ \|s\|_\infty}{\bar n}+8\frac{w(\bar n)}{\bar n},
\end{equation}
where $C$ may depend on $\kappa'_1,\kappa'_2,\kappa'_4,\kappa'_5,\kappa'_6,c_1,c_2.$

In order to bound $\E\left[\|s-\tilde s^{\mathcal P}\|^2_{\bm \Psi} \BBone_{\Omega^c_T\cup \Omega^c_\sigma} \right],$ we first notice that from the triangle inequality and Lemma~\ref{lemm:chi2exp}
\begin{align*}
\|s-\tilde s^{\mathcal P}\|_{\bm \Psi}
&\leq \|s- s^\star_{\hat m^\mathcal P}\|_{\bm \Psi}+\|s^\star_{\hat m^\mathcal P}-\hat s^\star_{\hat m^\mathcal P}\|_{\bm \Psi}\\
&\leq   \|s\|_{\bm \Psi}+\chi(\hat m) +\chi_R(\hat m),
\end{align*}
hence 
$$\|s-\tilde s^{\mathcal P}\|^2_{\bm \Psi} \leq   \|s\|^2_{\bm \Psi}+4\chi^2(m_\bullet) +4\chi_R^2(m_\bullet).$$
Then setting $p_T=\P(\Omega_T^c)$ and $p_\sigma=\P(\Omega_\sigma^c),$ Cauchy-Schwarz inequality entails 
\begin{equation*}
\E\left[\|s-\tilde s^{\mathcal P}\|^2_{\bm \Psi} \BBone_{\Omega^c_T\cup \Omega^c_\sigma}\right] \leq  2(p_T+p_\sigma)\|s\|^2_{\bm \Psi}+4\sqrt{p_T+p_\sigma} \left(\sqrt{\E\left[\chi^4(m_\bullet)\right]} +\sqrt{\E\left[\chi_R^4(m_\bullet)\right]}\right).
\end{equation*}
Let $\bm\lambda\in m_\bullet,$ $\|\Psi^\star_{\bm\lambda}\|_\infty \leq \kappa^d 2^{L_\bullet/2},$ so applying Assumption \textbf{(Conc)} with $\mathcal T= \{\Psi^\star_{\bm\lambda}\}$ and $\mathcal T= \{-\Psi^\star_{\bm\lambda}\},$ we get
$$\P\left(|\nu(\Psi^\star_{\bm\lambda})|\geq \epsilon \right)
 \leq 2 \exp\left(-\min\left\{\frac{\bar n \epsilon^2}{4\kappa'_2\|s\|_\infty}, \frac{\bar n \epsilon}{2\kappa'_3 \kappa^d 2^{L_\bullet/2}}\right\}\right).$$
Then setting  $\iota= \left(e(L_\bullet-dj_0+d-1)/(d-1)\right)^{d-1},$ Proposition~\ref{prop:combinatorial} yields
$$p_T\leq 2 \iota 2^{L_\bullet}  \exp\left(-C\|s\|_\infty \frac{\bar n}{\iota^2 2^{L_\bullet}}\right) \leq \frac{C}{\bar n^2 (\log(\bar n)/d)^{d+1}},$$
where $C$ may depend on $\kappa'_2,\kappa'_3,\kappa'_7,j_0,d.$
Besides, we deduce from Assumption \textbf{(Conc)} and Lemma~\ref{lemm:localization} that, for all $x>0,$
$$\P\left(\chi(m_\bullet) \geq \kappa'_1 \sqrt{\frac{\|s\|_\infty D_{m_\bullet}}{\bar n}} + \sqrt{\kappa'_2 \|s\|_\infty \frac{x}{\bar n}} + \kappa'_3 \kappa'_7\underline D(L_\bullet) \frac{x}{\bar n}\right) \leq \exp(-x).$$
For a nonnegative random variable $U,$ Fubini's inequality implies 
$$\E[U^4]=\int_0^\infty 4 x^{p-1} \P(U\geq x)\d x$$
so 
$$\E[\chi^4(m_\bullet)] \leq C \max\left\{\frac{\iota^4 2^{2L_\bullet}}{\bar n^4},\frac{\iota^2 2^{2L_\bullet}}{\bar n^2}\right\}\leq \frac{C}{\left(\log (\bar n)/d\right)^{2(d+1)}}$$
where $C$ may depend on $\kappa'_1,\kappa'_2,\kappa'_3,\kappa'_7,j_0,d.$
Remembering~\eqref{eq:step3} , we conclude that
$$\E\left[\|s-\tilde s^{\mathcal P}\|^2_{\bm \Psi} \right] \leq 3 \|s-s^\star_m\|^2_{\bm \Psi}+ C_1 \frac{\bar R D_m}{\bar n} +C_2 \frac{ \|s\|_\infty}{\bar n}+C_3\max\{\|s\|^2_{\bm \Psi},1\} \left(\frac{1}{\bar n\left(\log (\bar n)/d\right)^{3(d+1)/2}}+\frac{w(\bar n)}{\bar n}\right),$$
where $C_1$ may depend on $\kappa'_1,\kappa'_2,\kappa'_4,\kappa'_5,\kappa'_6,c_1,c_2,$ $C_2$ may depend on $\kappa'_2,$  $C_3$ may depend $\kappa'_1,\kappa'_2,\kappa'_3,\kappa'_7,j_0,d.$

\subsection{Proofs of Corollaries~\ref{corol:density} to~\ref{corol:levydensitydisc}}\label{sec:proofcorollaries}

\subsubsection{Proof of Corollary~\ref{corol:density}}
Assumption \textbf{(Conc)} is a straightforward consequence of Talagrand's inequality, as stated for instance in~\cite{Massart} (Inequality $(5.50),$ and is satisfied, whatever $\theta>0,$ for
\begin{equation}\label{ref:concdensity}
\bar n=n, \kappa'_1=1+\theta,\kappa'_2=2,\kappa'_3=(1/3+1/\theta)/2.
\end{equation}
For all $\bm\lambda\in m_\bullet,$ $\hat \sigma^2_{\bm\lambda}$ is an unbiased estimator for $\Var(\Psi_{\bm\lambda}(Y_1)).$ Besides, the existence of $\Omega_\sigma$ follows from Lemma 1 in~\cite{ReynaudRT} with $\gamma=2.$ Thus Assumptions \textbf{(Var)} and \textbf{(Rem)} are satisfied by taking $\kappa'_4=1,\kappa'_5$ that only depends on $\kappa$ and $d,$ $\kappa'_6=0,$ $w(n)=C(\kappa,j_0,d)/\log^{d+1} (n).$

\subsubsection{Proof of Corollary~\ref{corol:copula}} Setting $Y_i=(F_1(X_{i1}),\ldots,F_d(X_{id})),i=1,\ldots,n,$ we recover the previous density estimation framework, so Assumption \textbf{(Conc)} is still satisfied with~\eqref{ref:concdensity}. Setting $\hat Y_i=(F_{n1}(X_{i1}),\ldots,\hat F_{nd}(X_{id})),i=1,\ldots,n,$ and 
$$\check \sigma^2_{\bm \lambda}=\frac{1}{n(n-1)}\sum_{i=2}^n\sum_{j=1}^{i-1}\left(\Psi_{\bm\lambda}(Y_i)-\Psi_{\bm\lambda}(Y_j)\right)^2,$$
we observe that,  for all $\bm\lambda \in m_{\bullet}$
$$\max\left\{\hat\sigma^2_{\bm\lambda} - 4 \check \sigma^2_{\bm \lambda}, \check \sigma^2_{\bm\lambda} - 4 \hat \sigma^2_{\bm \lambda}\right\}\leq 8 R_{\bm\lambda}(n)$$
where 
$$R_{\bm \lambda}(n)=\frac{1}{n(n-1)}\sum_{i=2}^n(i-1)\left(\Psi_{\bm\lambda}(\hat Y_i)-\Psi_{\bm\lambda}(Y_i)\right)^2.$$
Using the same arguments as in the proof of Proposition~\ref{prop:copulaone}, we get for all $\bm\lambda \in m_{\bullet}$ and all $m\subset m_\bullet$
$$\E\left[R_{\bm \lambda}(n)\right] \leq C(\kappa,d) 2^{3L_\bullet} \log(n) /n,$$
$$R_{\bm \lambda}(n)\leq C(\kappa,d) 2^{3L_\bullet} \log(n) /n$$
except on a set with probability smaller than $2d/n,$
$$\E\left[\|\check s^\star_m - \hat s^\star _m\|^2\right] \leq C(\kappa,d,j_0) L_\bullet^{d-1}2^{4 L_\bullet} \log(n)/n,$$
and 
$$\sqrt{\E\left[\|\check s^\star_{m_\bullet} - \hat s^\star _{m_\bullet}\|^4\right]} \leq C(\kappa,d,j_0) L_\bullet^{d-1}2^{4 L_\bullet} /\sqrt{n}.$$
Building on the proof of Corollary~\ref{corol:density}, we conclude that  Assumptions \textbf{(Var)} and \textbf{(Rem)} are satisfied with $\kappa'_4,\kappa'_5$ that only depend on $\kappa,j_0,d,$ $\kappa'_6=0,$ and $w(n)=\sqrt{n} \log^{d-1}(n).$

\subsubsection{Proof of Corollary~\ref{corol:Poisson}}
Assumption \textbf{(Conc)} is a straightforward consequence of Talagrand's inequality for Poisson processes proved by~\cite{ReynaudPoisson} (Corollary 2), and is satisfied, whatever $\theta>0,$ by
\begin{equation}\label{ref:concdensity}
\bar n=\text{Vol}_d(Q), \kappa'_1=1+\theta,\kappa'_2=12,\kappa'_3=(1.25+32/\theta).
\end{equation}
For all $\bm\lambda\in m_\bullet,$ $\hat \sigma^2_{\bm\lambda}$ is an unbiased estimator for $\int_Q \Psi^2_{\bm\lambda}s= \text{Vol}_d(Q)\Var(\check \beta_{\bm\lambda}).$ Besides, the existence of $\Omega_\sigma$ follows from Lemma 6.1 in~\cite{ReynaudR}. Thus Assumptions \textbf{(Var)} and \textbf{(Rem)} are satisfied by taking $\kappa'_4=1,\kappa'_5$ that only depends on $\kappa$ and $d,$ $\kappa'_6=0,$ $w(\bar n)=C(\kappa,j_0,d)/\log^{d+1} (\bar n).$

\subsubsection{Proof of Corollary~\ref{corol:levydensitycont}}
The proof is similar to that of Corollary~\ref{corol:Poisson} with $\bar n=T.$

\subsubsection{Proof of Corollary~\ref{corol:levydensitydisc}} Regarding Assumption \textbf{(Conc)}, the proof is similar to that of Corollary~\ref{corol:Poisson} with $\bar n=n\Delta.$  For all $\bm\lambda \in m_{\bullet},$ let 
$$\check \sigma^2_{\bm \lambda}=\frac{1}{n\Delta} \iint\limits_{[0,n\Delta]\times Q} \Psi^2_{\bm\lambda}(x) N(\d t, \d x).$$
For any bounded measurable function $g$ on $Q,$ let 
$$R(g)= \int_Q g(\d \widehat M-\d M),\quad I(g)=\int_Q g \d M - \E\left[\int_Q g \d M\right],\quad \hat I(g)=\int_Q g \d \widehat M - \E\left[\int_Q g \d \widehat M\right], $$
then 
$$R(g)=\hat I(g) - I(g) +D_\Delta(g)$$
where $D_\Delta$ has been defined in the proof of Proposition~\ref{prop:disclevyone}.
Notice that
$$\hat \sigma^2_{\bm \lambda} - \check \sigma^2_{\bm \lambda}  = R( \Psi^2_{\bm\lambda})$$
and 
$$\|\hat s^\star_{m_\bullet} - \check s^\star_{m_\bullet}\|^2_{\bm\Psi} = \sum_{\bm\lambda\in m_\bullet} R^2(\Psi_{\bm\lambda}).$$
In the course of the proof of Proposition~\ref{prop:disclevyone}, we have shown that, for bounded and Lipschitz functions $g$ on $Q,$
$$\left|D_\Delta(g)\right|\leq C(\lambda_{\varepsilon},\varepsilon,f,Q) \max\left\{\|g\|_1,\|g\|_\infty,\|g\|_L \right\} \Delta$$
provided $\Delta$ and $\varepsilon$ are small enough. Besides, both $\hat I(g)$ and $I(g)$ satisfy Bernstein inequalities (Bernstein inequality as stated in~\cite{Massart}, Proposition 2.9, for the former, and Bernstein inequality as stated in~\cite{ReynaudPoisson}, Proposition 7, for the latter). Combining all these arguments yields Corollary~\ref{corol:levydensitydisc}.

\subsection{Proof of Proposition~\ref{prop:composite}}\label{sec:proofcomposite}
For $\alpha>0,$ we set $r=\lfloor \alpha \rfloor+1.$

$(i).$ From~\eqref{eq:binomdiff}, it is easy to see that $\Delta^r_{h_\ell,\ell}(f,\mathbf x) = \Delta^r_{h_\ell}(u_\ell,x_\ell).$ Thus $w^{\{\ell\}}_r(f,t_\ell)_p=w_r(u_\ell,t_\ell)_p$  and $w^{\mathbf e}_r(f,\mathbf{t_e})_p=0$ as soon as $\mathbf e \subset \{1,\ldots,d\}$ contains at least two elements. Therefore,
$$\|f\|_{SB^{\alpha}_{p,q,(d)}} \leq C(p) \sum_{\ell=1}^d \|u_\ell\|_{B^{\alpha}_{p,q,(1)}}.$$

$(ii).$ For the sake of readability, we shall detail only two special cases. Let us first deal with the case $f(\mathbf x)=\prod_{\ell=1}^d u_\ell(x_\ell)$  where each $u_\ell\in B^{\alpha}_{p,q,(1)}.$ From~\eqref{eq:binomdiff},
$$\Delta^{r,\mathbf e}_{\mathbf h}(f,\mathbf x)=\prod_{\ell \in \mathbf e} \Delta^r_{h_\ell}(u_\ell,x_\ell) \prod_{\ell \notin \mathbf e} u_\ell(x_\ell),$$ 
so $$\|f\|_{SB^{\alpha}_{p,q,(d)}} \leq 2^d \prod_{\ell=1}^d \|u_\ell\|_{B^{\alpha}_{p,q,(1)}}.$$
Let us now assume that $d=3$ and that $f(\mathbf x)=u_1(x_1)u_{2,3}(x_2,x_3)$ where $u_1\in  B^{\alpha_1}_{p,q,(1)}$ and $u_{2,3}\in  B^{\alpha_2}_{p,q,(2)}.$ We set $r_\ell=\lfloor \alpha_\ell \rfloor+1$ for $\ell=1,2,$ and $\bar r = \lfloor \bar\alpha \rfloor+1,$ where $\bar \alpha=\min(\alpha_1,\alpha_2/2).$ For $0<t_1,t_2,t_3<1,$ we easily have
$$\|f\|_p=\|u_1\|_p\|u_{2,3}\|_p$$
$$t_1^{-\bar \alpha} w_{\bar r}^{\{1\}}(f,t_1)_p \leq t_1^{-\alpha_1} w_{r_1}(u_1,t_1)_p\|u_{2,3}\|_p$$
$$t_\ell^{-\bar\alpha} w_{\bar r}^{\{\ell\}}(f,t_\ell)_p \leq \|u_1\|_p t_\ell^{-\alpha_\ell} w_{r_\ell}^{\{\ell\}}(u_{2,3},t_\ell)_p, \text{ for } \ell=2,3$$
$$t_1^{-\bar\alpha}t_\ell^{-\bar\alpha} w_{\bar r}^{\{1,\ell\}}(f,t_1,t_\ell)_p \leq t_1^{-\alpha_1}w_{r_1}(u_1,t_1)_p t_\ell^{-\alpha_\ell} w_{r_\ell}^{\{\ell\}}(u_{2,3},t_\ell)_p, \text{ for } \ell=2,3.$$
Besides, we deduce from~\eqref{eq:binomdiff} that 
$$\|\Delta^{\bar r}_h(g,.)\|_p \leq C({\bar r},p) \|g\|_p,$$
and as operators $\Delta^{\bar r} _{h_\ell,\ell}$ commute, we have
$$t_2^{-\bar\alpha}t_3^{-\bar\alpha} w_{\bar r}^{\{2,3\}}(f,t_2,t_3)_p \leq C(p,\bar r) \|u_1\|_p t_2^{-\bar\alpha}t_3^{-\bar\alpha}  \min\left\{w_{\bar r}^{\{2\}}(u_{2,3},t_2)_p, w_{\bar r}^{\{3\}}(u_{2,3},t_3)_p\right\}.$$
The inequality of arithmetic and geometric means entails that $2t_2^{-\bar\alpha}t_3^{-\bar\alpha}\leq t_2^{-2\bar\alpha}+ t_3^{-2\bar\alpha},$ so 
 $$t_2^{-\bar\alpha}t_3^{-\bar\alpha} w_{\bar r}^{\{2,3\}}(f,t_2,t_3)_p \leq C(p,\bar r) \|u_1\|_p \left(t_2^{-\alpha_2} w_{r_2}^{\{2\}}(u_{2,3},t_2)_p +t_3^{-\alpha_3} w_{r_2}^{\{3\}}(u_{2,3},t_3)_p\right).$$
In the same way,
$$t_1^{-\bar\alpha}t_2^{-\bar\alpha}t_3^{-\bar\alpha} w_{\bar r}^{\{1,2,3\}}(f,t_1,t_2,t_3)_p \leq C(p,\bar r) t_1^{-\alpha_1} w_{r_1}(u_1,t_1)_p \left(t_2^{-\alpha_2} w_{r_2}^{\{2\}}(u_{2,3},t_2)_p +t_3^{-\alpha_3} w_{r_2}^{\{3\}}(u_{2,3},t_3)_p\right).$$
Consequently,
$$\|f\|_{SB^{\bar\alpha}_{p,q,(d)}} \leq C(p,\bar r) \|u_1\|_{B^{\alpha_1}_{p,q,(1)}}  \|u_{2,3}\|_{B^{\alpha_2}_{p,q,(2)}}.$$

$(iii).$ The proof follows from the chain rule for higher order derivatives of a composite function. Notice that for all $1\leq \ell\leq d$ and $1\leq r\leq \alpha-1,$ $u_\ell^{(r)}\in W^{\alpha-r}_{p,(1)},$ with $\alpha-r>1/p,$ so $u_\ell^{(r)}$ is bounded.
 
$(iv).$ The proof follows from a $d$-variate extension of Theorem 4.1, Inequality (10) in~\cite{Potapov} (see also~\cite{DeVoreLorentz} Chapter 6, Theorem 3.1).

$(v).$ See Theorem 3.10 in~\cite{NguyenSickel}.

\subsection{Proof of Theorem~\ref{theo:approx}}\label{sec:proofapprox}
We recall that for any finite sequence $(a_i)_{i\in I},$ and $0<p_1,p_2<\infty,$
\begin{equation*}\label{eq:lqnorms}
\left(\sum_{i\in I} |a_i|^{p_2}\right)^{1/p_2} \leq |I|^{(1/p_2-1/p_1)_+}\left(\sum_{i\in I} |a_i|^{p_1}\right)^{1/p_1}.
\end{equation*}
Besides, we have proved in the course of the proof of Proposition~\ref{prop:combinatorial} that 
$$\bm J_\ell \leq c_1(M,d)(\ell-dj_0+d-1)^{d-1}.$$
In the hyperbolic basis, $f$ admits a unique decomposition of the form 
$$f=\sum_{\ell=dj_0}^\infty\sum_{\bm\lambda\in U\bm{\nabla}(\ell)} \langle f,\Psi_{\bm\lambda}\rangle \Psi^\star_{\bm\lambda}.$$
Defining 
$$f_\bullet=\sum_{\ell=dj_0}^{L_\bullet}\sum_{\bm\lambda\in U\bm{\nabla}(\ell)} \langle f,\Psi_{\bm\lambda}\rangle \Psi^\star_{\bm\lambda},$$
we have for finite $q>0,$ using the aforementioned reminders,
\begin{align*}
\|f-f_\bullet\|^2_{\bm\Psi} 
&= \sum_{\ell=L_\bullet+1}^\infty\sum_{\bm j\in \bm J_\ell}\sum_{\bm\lambda\in \bm{\nabla_j}} \langle f,\Psi_{\bm\lambda}\rangle ^2\\
&\leq \sum_{\ell=L_\bullet+1}^\infty  \sum_{\bm j\in \bm J_\ell} \left(\sharp \bm{\nabla_j}\right)^{2(1/2-1/p)_+}\left( \sum_{\bm\lambda\in\bm{\nabla_j}} |\langle f,\Psi_{\bm\lambda}\rangle|^p\right)^{2/p}\\
&\leq C( B,d,p) \sum_{\ell=L_\bullet+1}^\infty 2^{2\ell (1/2-1/p)_+} \sum_{\bm j\in \bm J_\ell} \left( \sum_{\bm\lambda\in\bm{\nabla_j}} |\langle f,\Psi_{\bm\lambda}\rangle|^p\right)^{2/p}\\
&\leq C(B,d,p) \sum_{\ell=L_\bullet+1}^\infty 2^{2\ell (1/2-1/p)_+} \sharp\bm J_\ell ^{2(1/2-1/q)_+}\left(\sum_{\bm j\in \bm J_\ell} \left( \sum_{\bm\lambda\in\bm{\nabla_j}} |\langle f,\Psi_{\bm\lambda}\rangle|^p\right)^{q/p}\right)^{2/q}\\
&\leq C(B,d,p) \sum_{\ell=L_\bullet+1}^\infty 2^{2\ell (1/2-1/p)_+} (\ell-dj_0+d-1)^{2(d-1)(1/2-1/q)_+}R^22^{-2\ell(\alpha+1/2-1/p)}\\
&\leq C(B,d,p) R^2\sum_{\ell=L_\bullet+1}^\infty (\ell-dj_0+d-1)^{2(d-1)(1/2-1/q)_+}2^{-2\ell(\alpha-(1/p-1/2)_+)}\\
&\leq C(B,\alpha,p,d)R^2 L_\bullet^{2(d-1)(1/2-1/q)_+}  2^{-2L_\bullet(\alpha-(1/p-1/2)_+)}.
\end{align*}
The case $q=\infty$ can be treated in the same way.

Let us fix $k\in\{0,\ldots,L_\bullet-\ell_1\}$ and define $\bar m (\ell_1+k,f)$ as the subset of $U\bm\nabla(\ell_1+k)$ such that $\left\{|\langle f,\Psi_{\bm{\lambda}}\rangle|; \bm\lambda \in \bar m (\ell_1+k,f)\right\}$ are the  $N(\ell_1,k)$  largest elements among $\left\{|\langle f,\Psi_{\bm{\lambda}}\rangle|; \bm\lambda \in U\bm\nabla(\ell_1+k)\right\}$. We then consider the approximation for $f$ given by
$$A(\ell_1,f)=\sum_{\ell=dj_0}^{\ell_1-1}\sum_{\bm\lambda\in U\bm{\nabla}(\ell)} \langle f,\Psi_{\bm\lambda}\rangle \Psi^\star_{\bm\lambda}+
\sum_{k=0}^{L_\bullet-\ell_1}\sum_{\bm\lambda\in\bar m (\ell_1+k,f)} \langle f,\Psi_{\bm\lambda}\rangle^2
$$
and the set 
$$m_{\ell_1}(f)=\left(\bigcup_{\ell=dj_0}^{\ell_1-1} U\bm{\nabla}( \ell) \right)
\cup \left( \bigcup_{k=0}^{L_\bullet-\ell_1} \bar m(\ell_1+k,f)\right).$$ 
Let us first assume that $0<p\leq 2.$ Using Lemma 4.16 in~\cite{Massart} and~\eqref{eq:lqnorms}, we get
\begin{align*}
\|f_\bullet-A(\ell_1,f))\|_{\bm\Psi}^2
&= \sum_{k=0}^{L_\bullet-\ell_1}\sum_{\bm\lambda\in U\bm{\nabla}(\ell_1+k) \backslash \bar m (\ell_1+k,f)} \langle f,\Psi_{\bm\lambda}\rangle ^2\\
&\leq\sum_{k=0}^{L_\bullet-\ell_1}\left(\sum_{\bm\lambda\in U\bm{\nabla}(\ell_1+k)} |\langle f,\Psi_{\bm\lambda}\rangle|^p\right)^{2/p}/(N(\ell_1,k)+1)^{2(1/p-1/2)}\\
&\leq\sum_{k=0}^{L_\bullet-\ell_1}\sharp \bm J_{\ell_1+k} ^{2(1/p-1/q)+}\left(\sum_{\bm j\in \bm J_{\ell_1+k}}\left(\sum_{\bm\lambda\in \bm{\nabla_j}} |\langle f,\Psi_{\bm\lambda}\rangle|^p\right)^{q/p}\right)^{2/q}/(N(\ell_1,k)+1)^{2(1/p-1/2)}.
\end{align*}
Besides, it follows from~\eqref{eq:N} that 
$$ N(\ell_1,k)+1 \geq 2 M^{-d}2^{-d}(d-1)^{-(d-1)} (\ell_1+k-dj_0+d-1)^{d-1} 2^{\ell_1} (k+2)^{-(d+2)}.$$
Therefore 
\begin{equation*}
\|f_\bullet-A(\ell_1,f))\|_{\bm\Psi}^2
\leq C(\alpha,p,d)R^2  (\ell_1-dj_0+d-1)^{2(d-1)(1/2-1/\max(p,q))}2^{-2\alpha \ell_1} .
\end{equation*}
In case $p\geq 2,$ the same kind of upper-bound follows from 
$$\|f_\bullet-A(\ell_1,f))\|_{\bm\Psi}^2
\leq
\sum_{k=0}^{L_\bullet-\ell_1}\sharp U\bm\nabla(\ell_1+k)^{2(1/2-1/p)} \left(\sum_{\bm\lambda\in U\bm{\nabla}(\ell_1+k)} |\langle f,\Psi_{\bm\lambda}\rangle|^p\right)^{2/p}.$$

Last, 
$$ \|f-A(\ell_1,f))\|_{\bm\Psi}^2=\|f-f_\bullet\|_{\bm\Psi}^2+\|f_\bullet-A(\ell_1,f))\|_{\bm\Psi}^2$$
which completes the proof.

\bibliographystyle{alpha}
\bibliography{WaveletTree}

\end{document}